\documentclass[12pt, letterpaper]{amsart}
\usepackage[left=1in,right=1in,bottom=0.5in,top=0.8in]{geometry}
\usepackage{amsfonts}
\usepackage{amsmath, amssymb}
\usepackage{graphicx}
\usepackage[font=small,labelfont=bf]{caption}
\usepackage{epstopdf}
\usepackage{xcolor}
\usepackage{amsthm}
\usepackage{float}
\usepackage{pgfplots}
\usepackage{listings}
\usepackage{longtable}
\usepackage{mathrsfs}
\usepackage{bbold}
\usepackage{comment}
\usepackage{esint}
\usetikzlibrary{arrows, patterns, patterns.meta, calc, math}

\usepackage{mathtools}
\usepackage{scalerel}[2014/03/10]
\usepackage[usestackEOL]{stackengine}
\usepackage[utf8]{inputenc}
\usepackage[T1]{fontenc}
\usepackage[shortlabels, inline]{enumitem}
\DeclarePairedDelimiterX\set[1]\lbrace\rbrace{#1}

\allowdisplaybreaks

\usepackage{scalerel}[2014/03/10]
\usepackage[usestackEOL]{stackengine}

\usepackage[backref=page]{hyperref}
\hypersetup{
plainpages=false,
colorlinks,
linkcolor={cyan!90!black},
citecolor={magenta},
urlcolor={red!90!black},
bookmarksdepth=2,}

\hypersetup{
pdftitle={On Hardy-Littlewood-Sobolev estimates for weighted Laplacians},
pdfsubject={Mathematics, PDE, Analysis},
pdfauthor={Khalid Baadi},
pdfkeywords={Parabolic systems, Cauchy problems, Green operators, variational methods, Gaussian estimates, Heat Kernel, Energy equality.}
}

\newtheorem{thm}{Theorem}[section]
\newtheorem{cor}[thm]{Corollary}
\newtheorem{prop}[thm]{Proposition}
\newtheorem{lem}[thm]{Lemma}

\theoremstyle{definition}

\theoremstyle{remark}
\newtheorem{rem}[thm]{Remark}

\definecolor{energy}{RGB}{114,0,172}
\definecolor{freq}{RGB}{45,177,93}
\definecolor{spin}{RGB}{251,0,29}
\definecolor{signal}{RGB}{203,23,206}
\definecolor{circle}{RGB}{217,86,16}
\definecolor{average}{RGB}{203,23,206}

\definecolor{pa}{rgb}   {.4, .4, 0} 
\definecolor{kb}{rgb}   {.6, 0, 0}

\colorlet{shadecolor}{gray!20}
\pgfplotsset{compat=1.9}

\usepgflibrary{fpu}
\makeatletter
\let\c@equation\c@thm
\raggedbottom
\makeatother
\numberwithin{equation}{section}

\author{Pascal Auscher}
\address{Universit{\'e} Paris-Saclay, CNRS, Laboratoire de Math\'{e}matiques d'Orsay, 91405 Orsay, France}
\email{pascal.auscher@universite-paris-saclay.fr}
\author{Khalid Baadi}
\address{Universit{\'e} Paris-Saclay, CNRS, Laboratoire de Math\'{e}matiques d'Orsay, 91405 Orsay, France}
\email{khalid.baadi@universite-paris-saclay.fr}
\keywords{Degenerate Laplacians, Sobolev embeddings, heat kernel, Muckenhoupt weights, weighted Lorentz spaces.}
\date{April 16, 2026}
\subjclass[2010]{Primary: 46E35, 35K08, 42B37 Secondary: 35K65, 42B20.}
\title[On Hardy-Littlewood-Sobolev estimates for degenerate Laplacians]{On Hardy-Littlewood-Sobolev estimates for degenerate Laplacians}

%

\DeclareMathOperator*{\supess}{ess\,sup}

\begin{document}

\begin{abstract}
We establish  norm inequalities for fractional powers of degenerate Laplacians, with degeneracy being determined by weights in the Muckenhoupt class $A_2(\mathbb{R}^n)$, accompanied by specific additional reverse H\"older assumptions. This extends the known results for classical Riesz potentials. The approach is based on size estimates for the degenerate heat kernels. The approach also applies to more general weighted degenerate operators.
\end{abstract}
\maketitle

\tableofcontents

\section{Introduction}\label{section 1}

Degenerate elliptic operators  where the degeneracy is controlled by weights in the Muckenhoupt class  have been introduced in \cite{MR643158}. Since then, they have attracted attention with extension to parabolic equations \cite{MR772255,MR748366, MR799906, MR1711391}. For $A_2(\mathbb{R}^n)$ weights $\omega$, $-\Delta_\omega$ can be defined as  a maximal accretive operator on the weighted space $L^2_\omega(\mathbb{R}^n)$.  Much is known. For example, the associated semigroup  is bounded on $L^p_\omega(\mathbb{R}^n)$ for all $1\le p \le \infty$, it  has $H^\infty$ functional calculus (equivalently, Borel functional calculus) if $1<p<\infty$.  The particular study of its square root can be found in   \cite{cruz2015kato, cruz2017kato} with a proof of the Kato problem on $L^2_\omega(\mathbb{R}^n)$ and extension to some range of $p$, and even a "weighted theory" with weights in the Muckenhoupt classes for the measure $w(x)\, dx$.

It is not known whether there are $L^p-L^q$ bounds for the fractional powers $(-\Delta_\omega)^{-\alpha}$ in the spirit of the Hardy-Littlewood-Sobolev estimates as in the case where $\omega=1$. Recall that a possible way to obtain such estimates when $\omega=1$ is to start from  the heat semigroup $L^p-L^q$ bounds for  $q>p$, and use this in a Calder\'on's representation formula to prove  weak type $(p,q)$ bounds for the fractional powers $(-\Delta)^{-\alpha}$ with the relation between $\alpha, p, q$ and dimension obeying the rule imposed by scaling, and then proceed by Marcinkiewicz interpolation.  However, this strategy does not apply as such in the weighted case as the semigroup is not bounded from $L^p_\omega(\mathbb{R}^n)$ to $L^q_\omega(\mathbb{R}^n)$ if $q>p$. Our observation is that when $\omega$ additionally belongs to the reverse H\"older class $RH_{\frac{q}{p}}(\mathbb{R}^n)$ then the correct boundedness property is from  $L^p_\omega(\mathbb{R}^n)$ to $L^q_{{\omega}^{q/p}}(\mathbb{R}^n)$. Hence, the above strategy can be developed for the fractional powers and this allows us to prove the following result.  Definitions of weighted spaces and operators are in the next section.

\begin{thm}\label{thm:principal}
    Let $n\ge 1$ and $\omega \in A_2(\mathbb{R}^n)$. Let $\Delta_\omega$ be the weighted Laplacian defined by the method of forms. Let $1<p<q<\infty$. Assume that $\omega \in RH_{\frac{q}{p}}(\mathbb{R}^n)$ and set $\alpha=\frac{n}{2}(\frac{1}{p}-\frac{1}{q})$. Then, there exists a constant $C = C([ \omega  ]_{A_2},[ \omega  ]_{RH_{\frac{q}{p}}},n,p,q)<\infty$ such that
    \begin{equation}\label{eq:fractionalHLS}
       \| \omega^{1/p}f \|_{L^q_{\mathrm{d}x}(\mathbb{R}^n)}= \| f \|_{L^q_{{\omega}^{q/p}}(\mathbb{R}^n)} \leq C \| (-\Delta_\omega)^\alpha f \|_{L^p_{\omega}(\mathbb{R}^n)}, \quad \forall f \in D((-\Delta_\omega)^\alpha).
    \end{equation}
    In particular, $(-\Delta_\omega)^{-\alpha}$ defines a bounded operator from $L^p_\omega(\mathbb{R}^n)$ to $L^q_{{\omega}^{q/p}}(\mathbb{R}^n)$ or, equivalently, $w^{1/p}(-\Delta_\omega)^{-\alpha}$ is bounded from $L^p_\omega(\mathbb{R}^n)$ to $L^q_{\mathrm{d}x}(\mathbb{R}^n)$. If $p=1$, then there exists a constant $C = C([ \omega  ]_{A_2},[ \omega  ]_{RH_{q}},n,q)<\infty$ such that
    \begin{equation}\label{eq:fractionalHLSweak}
        \| \omega  f \|_{L^{q,\infty}_{\mathrm{d}x}(\mathbb{R}^n)} \leq C \| (-\Delta_\omega)^\alpha f \|_{L^1_{\omega}(\mathbb{R}^n)}, \quad \forall f \in D((-\Delta_\omega)^\alpha).
    \end{equation}
    In particular, $\omega \,(-\Delta_\omega)^{-\alpha}$ defines a bounded operator from $L^1_\omega(\mathbb{R}^n)$ to $L^{q,\infty}_{\mathrm{d}x}(\mathbb{R}^n)$.
\end{thm}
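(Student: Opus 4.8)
The plan is to follow the classical scheme indicated in the introduction: establish an $L^p_\omega\to L^q_{\omega^{q/p}}$ smoothing bound for the heat semigroup $e^{t\Delta_\omega}$, insert it into Calder\'on's subordination formula $(-\Delta_\omega)^{-\alpha}=\Gamma(\alpha)^{-1}\int_0^\infty t^\alpha e^{t\Delta_\omega}\,\frac{dt}{t}$, and finish by a maximal‑function/interpolation argument. The only genuinely new ingredient is the semigroup bound with the \emph{shifted} target weight $\omega^{q/p}$; the rest is a weighted transcription of the proof of the Hardy--Littlewood--Sobolev inequality. Throughout I use the upper Gaussian bound $0\le p_t(x,y)\le C\,\omega(B(x,\sqrt t))^{-1}\exp(-c|x-y|^2/t)$ for the kernel of $e^{t\Delta_\omega}$ (classical in this setting), the doubling property of $\omega$ (a consequence of $\omega\in A_2$, with doubling exponent $\le 2n$) and, crucially, the reverse H\"older hypothesis.

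\textbf{Step 1 (semigroup smoothing).} I would first prove that for $1\le p<q<\infty$ with $\omega\in RH_{q/p}$ one has $\|e^{t\Delta_\omega}f\|_{L^q_{\omega^{q/p}}}\le C\,t^{-\alpha}\|f\|_{L^p_\omega}$, $\alpha=\frac n2(\frac1p-\frac1q)$. Rewriting the left side as $\|\omega^{1/p}e^{t\Delta_\omega}f\|_{L^q_{\mathrm dx}}$ and applying H\"older's inequality in $y$ against the measure $p_t(x,y)\omega(y)\,dy$ (whose total mass is $O(1)$), using the Gaussian upper bound, gives the pointwise estimate
\[
\omega(x)^{1/p}|e^{t\Delta_\omega}f(x)|\ \lesssim\ \Bigl(\tfrac{\omega(x)}{\omega(B(x,\sqrt t))}\Bigr)^{1/p}\Bigl(\int e^{-c|x-y|^2/t}|f(y)|^p\omega(y)\,dy\Bigr)^{1/p}.
\]
Raising to the power $q$, integrating in $x$, linearising the inner $p$‑th power by Jensen's inequality applied to the normalised measure $\propto e^{-c|x-y|^2/t}|f(y)|^p\omega(y)\,dy$, and then using Fubini, the whole inequality reduces to the scalar estimate
\[
\sup_{y}\ \int e^{-c'|x-y|^2/t}\,\frac{\omega(x)^{q/p}}{\omega(B(x,\sqrt t))^{q/p}}\,dx\ \lesssim\ t^{-\frac n2(\frac qp-1)} .
\]
This I would obtain by the annular decomposition $|x-y|\sim 2^k\sqrt t$: the Gaussian gives $e^{-c'4^{k-1}}$; doubling gives $\omega(B(x,\sqrt t))^{-q/p}\lesssim 2^{2n\frac qp(k+1)}\omega(B(y,2^k\sqrt t))^{-q/p}$ on $B(y,2^k\sqrt t)$; and $\omega\in RH_{q/p}$ gives $\int_{B(y,2^k\sqrt t)}\omega^{q/p}\lesssim|B(y,2^k\sqrt t)|^{1-\frac qp}\omega(B(y,2^k\sqrt t))^{q/p}$. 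Multiplying these, the $\omega$‑volumes cancel exactly and one is left with $\sum_k e^{-c'4^{k-1}}2^{nk(\frac qp+1)}|B(y,2^k\sqrt t)|^{1-\frac qp}$, a convergent series times $t^{-\frac n2(\frac qp-1)}$. This is the only place $\omega\in RH_{q/p}$ is used essentially. The same annular decomposition (no reverse H\"older needed) gives the $t$‑uniform bound $|e^{t\Delta_\omega}f|\lesssim M_\omega f$, with $M_\omega$ the uncentred maximal operator for $\omega\,dx$.

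\textbf{Step 2 (Calder\'on's formula and weak type).} Split $\int_0^\infty=\int_0^T+\int_T^\infty$ in the subordination formula. The small‑$t$ part is $\lesssim T^\alpha M_\omega g(x)$ by the uniform bound of Step 1. For the large‑$t$ part one needs decay strictly faster than the scale‑invariant rate $t^{-\alpha}$, and this is supplied by the \emph{openness} of reverse H\"older classes: $\omega\in RH_{q/p}$ implies $\omega\in RH_{q_1/p}$ for some $q_1>q$, so $\omega(B(x,R))\gtrsim (R/\rho)^{\,n(1-p/q_1)}\omega(B(x,\rho))$ for $R\ge\rho$, and since $\tfrac1{2p}n(1-p/q_1)=\tfrac n{2p}-\tfrac n{2q_1}>\alpha$, the crude bound $|e^{t\Delta_\omega}g(x)|\lesssim\omega(B(x,\sqrt t))^{-1/p}\|g\|_{L^p_\omega}$ (H\"older again, discarding the Gaussian) yields $\int_T^\infty t^\alpha|e^{t\Delta_\omega}g(x)|\,\frac{dt}{t}\lesssim T^\alpha\omega(B(x,\sqrt T))^{-1/p}\|g\|_{L^p_\omega}$. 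Balancing the two parts — choosing $T(x)$ with $\omega(B(x,\sqrt{T(x)}))=(\|g\|_{L^p_\omega}/M_\omega g(x))^p$ — gives $|(-\Delta_\omega)^{-\alpha}g(x)|\lesssim T(x)^\alpha M_\omega g(x)$, whence the weak‑type bound $\|(-\Delta_\omega)^{-\alpha}g\|_{L^{q,\infty}_{\omega^{q/p}}}\lesssim\|g\|_{L^p_\omega}$ for every $1\le p<q<\infty$ with $\omega\in RH_{q/p}$, by a distribution‑function computation paralleling Hedberg's proof of HLS, with the Euclidean volume $|B(x,\sqrt T)|\sim T^{n/2}$ replaced by $\omega(B(x,\sqrt T))$. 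The step I expect to be the main obstacle is precisely this weak‑type extraction: because $\omega$ is only an $A_2$ weight, $\omega(B(x,R))$ is not a power of $R$ (it merely lies between $R^{\,n(1-p/q)}$ and $R^{2n}$ with $x$‑dependent constants), so $T(x)$ is only implicitly defined and the passage from the pointwise inequality to the weak‑$(q,\infty)$ bound must be handled with care; it is the openness of $RH_{q/p}$ that rescues the otherwise borderline large‑time decay.

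\textbf{Step 3 (interpolation).} For $1<p<q<\infty$ I would upgrade weak to strong type by Marcinkiewicz interpolation between the weak endpoints at $(p_0,q_0)$ and $(p_1,q_1)$ on the line $\tfrac1{p_i}-\tfrac1{q_i}=\tfrac{2\alpha}{n}$: take $p_0=1$ (so $q_0/p_0<q/p$, hence $\omega\in RH_{q/p}\subset RH_{q_0}$) and $p_1=p+\delta$ with $\delta$ small enough that $q_1/p_1$ stays below the self‑improved reverse H\"older exponent of $\omega$. One uses the version of Marcinkiewicz interpolation that allows the \emph{target} measures to vary: here they are $\omega^{q_0/p_0}\mathrm dx$ and $\omega^{q_1/p_1}\mathrm dx$, the target exponents $q_\theta\equiv q$ are constant, and the interpolated measure $\omega^{(q_0/p_0)(1-\theta)+(q_1/p_1)\theta}\mathrm dx=\omega^{q/p_\theta}\mathrm dx$ is exactly the desired one — checking this compatibility is a secondary technical point. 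The case $p=1$ is the weak endpoint of Step 2. Finally, that $(-\Delta_\omega)^\alpha$ and $(-\Delta_\omega)^{-\alpha}$ invert one another on $D((-\Delta_\omega)^\alpha)$, and that the subordination formula is valid there, follow from the bounded $H^\infty$‑calculus of $-\Delta_\omega$ on $L^2_\omega$, which converts the operator bound into \eqref{eq:fractionalHLS} and \eqref{eq:fractionalHLSweak}.
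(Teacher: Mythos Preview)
Your Step 1 is essentially correct and coincides with the paper's Lemma~3.2 (the $\varepsilon=0$ case of their key lemma). The genuine gap is in Step~2, and you have identified it without resolving it. After balancing, your pointwise bound reads $|(-\Delta_\omega)^{-\alpha}g(x)|\lesssim T(x)^\alpha M_\omega g(x)$ with $\omega\bigl(B(x,\sqrt{T(x)})\bigr)=(\|g\|_{L^p_\omega}/M_\omega g(x))^p$; but $T(x)^\alpha\sim|B(x,\sqrt{T(x)})|^{1/p-1/q}$ is a power of the \emph{Lebesgue} volume, whereas the balance condition fixes only the $\omega$-\emph{mass} of that ball. For a general $A_2$ weight there is no pointwise relation between the two, so Hedberg's identity $|I_\alpha g|\lesssim\|g\|_p^{1-p/q}(Mg)^{p/q}$ has no weighted analogue, and the distribution-function computation you allude to cannot be carried out. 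The openness of $RH_{q/p}$ only repairs the large-time \emph{integrability}; it does not bridge this Lebesgue/weighted mismatch.

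The paper circumvents this by never attempting a pointwise balance. It splits $T_{\delta,R}=T_{\delta,a}+T_{a,R}$ at $a=a(\lambda)$ depending only on the weak-type level $\lambda$ (not on $x$), controls each piece in \emph{norm} via the semigroup bound at auxiliary target exponents $q_0<q<q_1$, and concludes by Markov's inequality. The crucial extra ingredient is an $\varepsilon$-perturbed version of your Step~1 (their Lemma~3.1): $e^{t\Delta_\omega}:L^{p}_{\omega^{1+(p/q)\varepsilon}}\to L^q_{\omega^{q/p+\varepsilon}}$ with the same $t^{-\alpha}$ rate for small $|\varepsilon|$. In the interpolation step they then choose $(p_i,q_i,\varepsilon_i)$ with $1/p_i+\varepsilon_i/q_i=1/p$, so that the operator $\omega^{1/p}T_{\delta,R}$ is the \emph{same} in both weak-type estimates and the target measure is $\mathrm{d}x$ in both; Stein--Weiss interpolation with change of source measures then applies cleanly. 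Your Step~3 is also confused: on the line $1/p_i-1/q_i=2\alpha/n$ the target exponents $q_i$ are \emph{not} constant, and your interpolated-weight formula is not the Stein--Weiss one. The paper's $\varepsilon$-perturbation is precisely what makes the interpolation go through --- see their remark immediately after the proof.
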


The expert reader may find the statement for $(-\Delta_\omega)^{-\alpha}$ alike to the famous Muckenhoupt-Wheeden weighted estimates \cite{muckenhoupt1974weighted}: under the same conditions on $p,q,\alpha$ and dimension, the unweighted fractional Laplacian $(-\Delta)^{-\alpha}$ is bounded from  $L^p_{\tilde\omega^p}(\mathbb{R}^n)$ to $L^q_{\tilde\omega^q}(\mathbb{R}^n)$ if and only if $\tilde\omega \in A_{p,q}(\mathbb{R}^n)$. There is indeed the same rule for the change of  weight. However, aside from the $A_2$ condition required to define our degenerate Laplacian, we  have a weaker condition on $\omega$ compared to  $\tilde\omega^p$.   Indeed, the reverse H\"older condition  $\tilde w^p\in RH_{\frac{q}{p}}(\mathbb{R}^n)$ holds when $\tilde\omega \in A_{p,q}(\mathbb{R}^n)$, but the converse is not true as we shall see. Besides, since $\omega$ is used to construct  the weighted Laplacian $\Delta_\omega$ on $L^2_{\omega}(\mathbb{R}^n)$, this is not really a weighted estimate that we are proving.

Inequalities \eqref{eq:fractionalHLS} and \eqref{eq:fractionalHLSweak} are  a priori weighted Sobolev embeddings of a functional nature. Some more geometric weighted Sobolev-Gagliardo-Nirenberg inequalities were obtained in  \cite{lacey2010sharp}. Namely,   for all $1\le p<q<\infty$ with $\frac 1 2 =\frac{n}{2}(\frac{1}{p}-\frac{1}{q})$, $\| f \|_{L^q_{{\tilde\omega}^{q}}(\mathbb{R}^n)} \lesssim \| \nabla f \|_{L^p_{\tilde\omega^p}(\mathbb{R}^n)}$ when $\tilde\omega \in A_{p,q}(\mathbb{R}^n)$ (which, again, requires $\tilde\omega^p \in RH_{\frac{q}{p}}(\mathbb{R}^n)$). 

We can only compare these results with ours  when $\alpha=1/2$ in the range of $p$ for which $ \| \nabla f \|_{L^p_{\omega}(\mathbb{R}^n)} $ dominates or is dominated by $ \| (-\Delta_\omega)^{1/2} f \|_{L^p_{\omega}(\mathbb{R}^n)}$. When $p=2$, one has $\| (-\Delta_\omega)^{1/2} f \|_{L^2_{\omega}(\mathbb{R}^n)}= \| \nabla f \|_{L^2_{\omega}(\mathbb{R}^n)}$ by construction. Moreover, deducing our results from these or other known results (\textit{e.g.}, \cite{muckenhoupt1974weighted}), or conversely,  requires additional restrictions on the class of weights and on the dimension, as we shall explain.

As a matter of fact, we will show that our reverse H\"older assumption is sharp in the sense that it is implied by \eqref{eq:fractionalHLS}. At the level of the semigroup, we shall see that with the assumptions of the theorem, then $\omega \in RH_{\frac{q}{p}}(\mathbb{R}^n)$ is equivalent to  $e^{t\Delta_w}:  L^p_{\omega}(\mathbb{R}^n) \to L^q_{{\omega}^{q/p}}(\mathbb{R}^n)$
with bound on the order $t^{-\alpha}$ for all $t>0$. This polynomial bound is the key point of our method.

Our method also allows for various generalizations. In \cite{O'Neil1963convolution}, O'Neil showed that the classical unweighted $L^p-L^q$ estimates for Riesz potentials (that is, the fractional Laplacians of negative order) can be improved to $L^p-L^{q,p}$ estimates, where $L^{q,p}$ is a Lorentz space, using the translation invariance of the operators. Later, it was observed by Tartar (see \cite{MR1662313}) that this is a consequence of real interpolation more than translation invariance.  Here too, real interpolation applies and yields two  apparently distinct improvements for the weighted fractional Laplacian, see Theorem \ref{thm:principal*} and Theorem \ref{cor:principal**}, in agreement with observations made in \cite{MR4440073}. In particular, we shall show that $w^{\frac 1 p}(-\Delta_\omega)^{-\alpha}$ is bounded from $L^p_{\omega}(\mathbb{R}^n)$ to the unweigthed Lorentz space $L^{q,p}(\mathbb{R}^n)$ under the same assumption as for \eqref{eq:fractionalHLS}.

Incorporating results on bounded holomorphic functional calculus, fractional powers $z^{-\alpha}$ can be replaced by holomorphic functions in sectors with the same behavior at the origin and infinity.

Given that our methods rely solely on functional calculus and upper bounds for the kernel of the semigroup $e^{t\Delta_w}$, our results extend to degenerate second-order elliptic operators whose semigroups satisfy Gaussian upper bounds, including those with real coefficients (see Section \ref{section: coeff}). For non real coefficients, we think that the result should hold in limited range for the exponents $p$ and $q$, but our arguments no longer seem to apply even though, again, non-optimal results can still be obtained using the literature, as we shall see.

As a motivation for this work, bounds for fractional weighted Laplacians are used to construct solutions to second-order degenerate parabolic equations with unbounded lower-order terms \cite{baadi2026wellposedness}. They are also used to establish a boundedness result for degenerate parabolic Riesz transforms. The second author will address this in a forthcoming work.

\subsubsection*{\textbf{Notation}} Throughout this paper, we adopt the following notation.
\begin{enumerate}[label=$\blacklozenge$]
\item We fix an integer $n \ge 1$. 
\item We use the notation $\mathrm{d}x$ for the Lebesgue measure on $\mathbb{R}^n$.
\item For any Lebesgue measurable set $E \subset \mathbb{R}^n$, we denote its Lebesgue measure by $|E|$.
\item By convention, the notation $C(a,b,\ldots)$ denotes a constant depending only on the parameters $(a,b,\ldots)$.
\item All cubes $Q \subset \mathbb{R}^n$ considered in this paper are assumed to have sides parallel to the coordinate axes.
\end{enumerate}

\subsubsection*{\textbf{Acknowledgements}}
We want to thank Chema Martell for bringing our attention to \cite{MR4440073} concerning Lorentz spaces after we completed this work and Carlos Pérez for enlightening discussions. We also thank the anonymous referee for providing suggestions that enhanced the presentation.

\section{Preliminaries and basic assumptions}\label{section 2}

\subsection{Muckenhoupt weights}

We say that an almost everywhere (for Lebesgue measure) defined function $\omega : \mathbb{R}^n \rightarrow (0,\infty]$ is a weight if it is locally integrable on $\mathbb{R}^n$. The Muckenhoupt class $A_p(\mathbb{R}^n)$, $p\in (1,\infty)$, is defined as the set of all weights $\omega$  verifying 
\begin{equation}\label{MuckWeight}
      [ \omega  ]_{A_p}:= \sup_{Q \subset \mathbb{R}^n  } \left ( \frac{1}{|Q|} \int_Q \omega(x) \ \mathrm{d}x   \right ) \left ( \frac{1}{|Q|} \int_Q \omega^{1-p'}(x) \ \mathrm{d}x   \right )^{p-1} < \infty,
\end{equation}
where the supremum is taken with respect to all cubes $Q \subset \mathbb{R}^n$. For $p=1$, the Muckenhoupt class $A_1(\mathbb{R}^n)$ is defined as the set of all weights $\omega$  verifying
\begin{equation*}
    \frac{1}{|Q|} \int_Q \omega(x) \, \mathrm{d}x \leq C \omega(y), \quad \text{for almost every} \ y \in Q,
\end{equation*}
for some constant $C <\infty$ and for all cubes $Q \subset \mathbb{R}^n$. Finally, we define $A_\infty(\mathbb{R}^n):= \bigcup_{p\ge1} A_p(\mathbb{R}^n)$.

The reverse H\"older class $RH_q(\mathbb{R}^n)$, $q\in (1,\infty)$, is defined as the set of all weights $\omega$ verifying 
\begin{equation*}
    \left ( \frac{1}{|Q|} \int_Q \omega^q(x) \ \mathrm{d}x   \right )^{\frac{1}{q}} \leq \frac{C}{|Q|} \int_Q \omega(x) \ \mathrm{d}x,
\end{equation*}
for some constant $C <\infty$ and for all cubes $Q \subset \mathbb{R}^n$. The infimum of such constants $C$ is what we denote by $[ \omega  ]_{RH_q}$. For $q=\infty$, the reverse H\"older class $RH_\infty(\mathbb{R}^n)$ is defined as the set of all weights $\omega $ verifying 
\begin{equation*}
    \omega(y) \leq  \frac{C}{|Q|} \int_Q \omega(x) \ \mathrm{d}x, \quad \text{for almost every} \ y \in Q,
\end{equation*}
for some constant $C <\infty$ and for all cubes $Q \subset \mathbb{R}^n$.

The Muckenhoupt-Wheeden class $A_{p,q}(\mathbb{R}^n)$, $1\le p \le q <\infty$, consists of all weights $\omega$ satisfying
\begin{equation*}
    [ \omega  ]_{A_{p,q}}:= \sup_{Q \subset \mathbb{R}^n  } \left ( \frac{1}{|Q|} \int_Q \omega^q(x) \ \mathrm{d}x   \right )^{\frac{1}{q}} \left ( \frac{1}{|Q|} \int_Q \omega^{-p'}(x) \ \mathrm{d}x   \right )^{\frac{1}{p'}} < \infty,
\end{equation*}
when $1<p$, and 
\begin{equation*}
   \left ( \frac{1}{|Q|} \int_Q \omega^q(x) \ \mathrm{d}x   \right )^{\frac{1}{q}} \leq C \omega(y), \quad \text{for almost every} \ y \in Q,
\end{equation*}
for some constant $C <\infty$ and for all cubes $Q \subset \mathbb{R}^n$ if $p=1$.

In the following proposition, we summarize several properties of Muckenhoupt weights. For the proofs, except for $(8)$ which is a new  formulation that is more convenient for our purpose, refer to \cite{garcia2011weighted} and \cite[Chapter 7]{grafakos2008classical}.

\begin{prop}\label{prop:weights} We have the following properties.
    \begin{enumerate}
        \item $A_p(\mathbb{R}^n) \subset A_q(\mathbb{R}^n)$ for all $1 \leq p \leq q \leq \infty$.
        \item $RH_q(\mathbb{R}^n) \subset RH_p(\mathbb{R}^n)$ for all $1 < p \leq q \leq \infty$.
        \item If $\omega \in A_p(\mathbb{R}^n)$, $1<p<\infty$, then there exists $q \in (1,p)$ such that $\omega \in A_q(\mathbb{R}^n)$.
        \item If $\omega \in RH_q(\mathbb{R}^n)$, $1<q<\infty$, then there exists $p \in (q,\infty)$ such that $\omega \in RH_p(\mathbb{R}^n)$.
        \item $A_\infty(\mathbb{R}^n)= \bigcup_{1\le p<\infty} A_p(\mathbb{R}^n)= \bigcup_{1< q\le \infty} RH_q(\mathbb{R}^n) $.
        \item If $1\leq p \leq \infty$ and $1<q<\infty$, then $\omega \in A_p(\mathbb{R}^n) \cap RH_q(\mathbb{R}^n)$ if and only if $\omega^q \in A_{q(p-1)+1}(\mathbb{R}^n)$.
        \item If $1 \le p \le q < \infty$, then $\omega \in A_{p,q}(\mathbb{R}^n)$ if and only if $\omega^q \in A_{1+q/p'}(\mathbb{R}^n)$ if and only if $\omega \in A_{1+1/p'}(\mathbb{R}^n)\cap RH_q(\mathbb{R}^n)$.
        \item If $1 \le  p < q < \infty$, then $\omega^{1/p} \in A_{p,q}(\mathbb{R}^n)$ if and only if 
        $\omega \in A_p(\mathbb{R}^n) \cap RH_{q/p}(\mathbb{R}^n)$.
    \end{enumerate}
\end{prop}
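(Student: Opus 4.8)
To prove Proposition \ref{prop:weights} it suffices to treat item (8), since (1)--(7) are classical and the indicated references cover them. The plan is to deduce (8) from items (6) and (7) together with one elementary identity among conjugate exponents, and I will also record a direct argument that keeps track of constants (useful since the paper wants this formulation for explicit bounds later).

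Fix $1\le p<q<\infty$ and set $\nu=\omega^{1/p}$, which is again a weight. Applying (7) to $\nu$ (legitimate because $1\le p\le q<\infty$) gives: $\nu\in A_{p,q}(\mathbb{R}^n)$ if and only if $\nu^{q}=\omega^{q/p}\in A_{1+q/p'}(\mathbb{R}^n)$. Applying (6) with the admissible pair of indices $(p,\,q/p)$ (here $1\le p\le\infty$ and $1<q/p<\infty$) gives: $\omega\in A_p(\mathbb{R}^n)\cap RH_{q/p}(\mathbb{R}^n)$ if and only if $\omega^{q/p}\in A_{(q/p)(p-1)+1}(\mathbb{R}^n)$. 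The two right-hand conditions coincide once we observe
\[
\Big(\tfrac{q}{p}\Big)(p-1)+1=1+\tfrac{q}{p'},
\]
which follows from $\tfrac{1}{p'}=\tfrac{p-1}{p}$ (with the usual convention $p'=\infty$, $1/p'=0$ when $p=1$, in which case both sides equal $1$). Chaining the two equivalences yields (8).

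For a self-contained argument when $p>1$, also giving quantitative bounds, I would start from the $A_{p,q}$ condition for $\omega^{1/p}$, which, using $-p'/p=1-p'$ and $p'(p-1)=p$, reads $\sup_Q\big(\tfrac1{|Q|}\int_Q\omega^{q/p}\big)^{1/q}\big(\tfrac1{|Q|}\int_Q\omega^{1-p'}\big)^{1/p'}<\infty$. For the direction ``$\Rightarrow$'': Jensen's inequality with exponent $q/p>1$ gives $\big(\tfrac1{|Q|}\int_Q\omega^{q/p}\big)^{1/q}\ge\big(\tfrac1{|Q|}\int_Q\omega\big)^{1/p}$, and inserting this into the supremum produces the $A_p$ quantity of $\omega$ (after raising to the power $p$ and using $p'(p-1)=p$); separately, Hölder's inequality applied to $1=\tfrac1{|Q|}\int_Q\omega^{1/p}\omega^{-1/p}$ gives $\big(\tfrac1{|Q|}\int_Q\omega^{1-p'}\big)^{1/p'}\ge\big(\tfrac1{|Q|}\int_Q\omega\big)^{-1/p}$, and inserting this produces the $RH_{q/p}$ bound for $\omega$. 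For ``$\Leftarrow$'': the reverse Hölder inequality turns $\big(\tfrac1{|Q|}\int_Q\omega^{q/p}\big)^{1/q}$ into $[\omega]_{RH_{q/p}}^{1/p}\big(\tfrac1{|Q|}\int_Q\omega\big)^{1/p}$, after which the $A_{p,q}$ expression for $\omega^{1/p}$ collapses, again via $p'(p-1)=p$, into $\big([\omega]_{A_p}[\omega]_{RH_{q/p}}\big)^{1/p}$. The case $p=1$ is covered by the first argument (or directly from the pointwise definitions of $A_1$, $A_{1,q}$, $RH_q$).

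The only thing requiring care is the bookkeeping of conjugate exponents — the identities $p'(p-1)=p$ and $(q/p)(p-1)+1=1+q/p'$ are precisely what make everything line up — together with checking that the endpoint $p=1$ is consistent with the conventions; beyond this I do not anticipate any genuine obstacle.
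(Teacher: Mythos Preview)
Your proof is correct and follows essentially the same route as the paper: both reduce (8) to the chain of equivalences (7) and (6) via the identity $(q/p)(p-1)+1=1+q/p'$. The additional self-contained quantitative argument you include is a nice bonus but is not present in the paper.
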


\begin{proof} As we said, we only prove (8).
By (6), the condition $\omega \in A_p(\mathbb{R}^n) \cap RH_{q/p}(\mathbb{R}^n)$ is equivalent to $\omega^{{q}/{p}} \in A_{\frac{q}{p}(p-1)+1}(\mathbb{R}^n)$, which can be rewritten as $(\omega^{{1}/{p}})^q \in A_{1+{q}/{p'}}(\mathbb{R}^n)$. By (7), this is equivalent  to the fact that $\omega^{{1}/{p}} \in A_{p,q}(\mathbb{R}^n)$.
 \end{proof}

\begin{rem}
   It is well known (see, for instance, \cite[Example 7.1.7]{grafakos2008classical}) that a weight of the form $\omega(x) = |x|^\beta$ belongs to the Muckenhoupt class $A_p(\mathbb{R}^n)$ for $p > 1$ if and only if $-n < \beta < n(p-1)$. Furthermore, according to point (6) of Proposition \ref{prop:weights} above, a weight $\omega$ belongs to the intersection $A_2(\mathbb{R}^n) \cap RH_{\frac{q}{p}}(\mathbb{R}^n)$ for $p < q$ if and only if $\omega^{\frac{q}{p}} \in A_{1+\frac{q}{p}}(\mathbb{R}^n)$. As a result, a typical example of a weight $\omega$ considered in the statements of the results in this paper, satisfying $\omega \in A_2(\mathbb{R}^n) \cap RH_{\frac{q}{p}}(\mathbb{R}^n)$ with $p < q$, is given by
    \begin{equation*}
        \omega(x) = |x|^\beta, \quad \forall x \in \mathbb{R}^n, \quad \text{with} \ -n\, \frac{p}{q} < \beta < n.
    \end{equation*}

\end{rem}

\subsection{Function spaces}

We refer to \cite[Ch. V]{Stein1993_HA}, \cite{garcia2011weighted} and \cite[Chapter 7]{grafakos2008classical} for general background and for the proofs of all the results concerning weights that we will cite below.

\textbf{Throughout this paper, $\omega$ denotes a fixed weight belonging to the Muckenhoupt class $A_2(\mathbb{R}^n )$}. By definition \eqref{MuckWeight} with $p=2$, the weight $\omega^{-1}$ is also in the Muckenhoupt class $A_2(\mathbb{R}^n )$ with $  [ \omega^{-1}  ]_{A_2}=  [ \omega  ]_{A_2}$. We introduce the measure $\mathrm d \omega := \omega(x) \mathrm d x$ and if $E \subset \mathbb{R}^n$ a Lebesgue measurable set, we write $\omega(E)$ instead of $\int_E \mathrm d \omega$. For all $x \in \mathbb{R}^n$ and $r>0$, we set $\omega_r(x):=\omega(B(x,\sqrt{r}))$.

It follows from \eqref{MuckWeight} with $p=2$ that there exits constants $\eta \in (0,1)$ and $\beta >0$, depending only on $n$ and $ [ \omega  ]_{A_2}$, such that 
\begin{equation}\label{MuckProportion}
\beta^{-1}\left ( \frac{\left | E \right |}{\left | Q \right |} \right )^{\frac{1}{2\eta }}\leq \frac{\omega(E)}{\omega(Q)}\leq \beta\left ( \frac{\left | E \right |}{\left | Q \right |} \right )^{2\eta},
\end{equation}
whenever $Q \subset \mathbb{R}^n$ is a cube and for all measurable sets $E \subset Q$. In particular, there exists a constant $D$, depending only on $n$ and $ [ \omega  ]_{A_2}$, called the doubling constant for $\omega$ such that
\begin{equation}\label{DoublingMuck}
    \omega(2Q)\leq D \omega(Q),
\end{equation}
for all cubes $Q \subset \mathbb{R}^n$. We may replace cubes by Euclidean balls. For simplicity, we keep using the same notation and constants.

For every $p\ge 1$ and $K \subset \mathbb{R}^n$ a measurable set, we let $L^p_\omega(K)$ be the space of all measurable functions $f:K \rightarrow \mathbb{C}$ such that $$\|f\|_{L^p_\omega(K)}:=\left ( \int_K |f|^p \mathrm d \omega  \right )^{1/p}<\infty.$$
In particular, $L^2_\omega(\mathbb{R}^n)$ is the Hilbert space of square-integrable functions on $\mathbb{R}^n$ with respect to $\mathrm{d}\omega$. We denote its norm by ${\lVert \cdot \rVert}_{2,\omega}$ and its inner product by $\langle \cdot , \cdot \rangle_{2,\omega}$.  The class $\mathcal{D}(\mathbb{R}^n)$ consists of smooth ($C^\infty$), compactly supported test functions on $\mathbb{R}^n$, is dense in $L^2_{\omega}(\mathbb{R}^n)$ as it is dense in $C_c(\mathbb{R}^n)$, the space of continuous functions on $\mathbb{R}^n$ with compact support, and this latter space is dense in $L^2_{\omega}(\mathbb{R}^n)$ as $\mathrm d \omega$ is a Radon measure on $\mathbb{R}^n$. Moreover, using the condition \eqref{MuckWeight} with $p=2$, we have 
\begin{equation*}
L^2_{\omega}(\mathbb{R}^n) \subset L^1_{\text{loc}}(\mathbb{R}^n, \mathrm{d}x).
\end{equation*}

\subsection{The degenerate Laplacian}

We define $H^1_{\omega}(\mathbb{R}^n)$ as the space of functions $f \in L^2_\omega(\mathbb{R}^n)$ for which the distributional gradient $\nabla_x f$ belongs to $L^2_\omega(\mathbb{R}^n)^n$, and equip this space with the norm
$\left\| f \right\|_{H^1_\omega} := ( \left\| f \right\|_{2,\omega}^2 + \left\| \nabla_x f \right\|_{2,\omega}^2 )^{1/2}$ making it a Hilbert space. The class $\mathcal{D}(\mathbb{R}^n)$ is dense in $H^1_{\omega}(\mathbb{R}^n)$ and this follows from standard truncation and convolution techniques combined with the boundedness of the maximal operator on $L^2_\omega(\mathbb{R}^n)$. For a proof, see \cite[Thm. 2.5]{kilpelainen1994weighted}.

We define $-\Delta_\omega$ as the unbounded self-adjoint operator on $L^2_\omega(\mathbb{R}^n)$ associated to the positive symmetric sesquilinear form on $H^1_\omega(\mathbb{R}^n) \times H^1_\omega(\mathbb{R}^n)$ defined by
\begin{equation*}
    (u,v) \mapsto \int_{\mathbb{R}^n} \nabla_x u \cdot \overline{\nabla_x v}  \ \mathrm d \omega.
\end{equation*}
The operator $-\Delta_\omega$ is injective as the measure $\mathrm{d}\omega$ has infinite mass by \eqref{MuckProportion}. For all $\beta \in \mathbb{R}$, we define $(-\Delta_\omega)^\beta$ as the self-adjoint operator $\mathbf{t^{\beta}}(-\Delta_\omega)$, that is, the operator obtained by applying the function $f_\beta(z)=z^\beta$ to $-\Delta_\omega$. This construction can be made either via the functional calculus for sectorial operators or, equivalently, via the Borel functional calculus, since $-\Delta_\omega$ is self-adjoint. For further details, we refer to \cite{reed1980methods, McIntosh86, haase2006functional}. Let the domain of $(-\Delta_\omega)^\beta$ be denoted by $D((-\Delta_\omega)^\beta)$. Note that 
if $\beta>0$, then $D((-\Delta_\omega)^{-\beta})$ is the range of $(-\Delta_\omega)^\beta$ from its domain.

The operator $-\Delta_\omega$ generates a contractive holomorphic semi-group $(e^{z\Delta_\omega})_{z \in S_{\frac{\pi}{2}}}$ on the half-plane $S_{\frac{\pi}{2}}:=\left\{ z \in \mathbb{C}\setminus \{0\} \, : |\mathrm{arg}(z)| < \frac{\pi}{2} \right\} = \left\{ z \in \mathbb{C} \, : \mathrm{Re}(z) > 0 \right\}$. We now present the following lemma, which provides the Gaussian upper and lower  bounds  for the positive-time heat kernels. These estimates are well known and we refer to \cite{cruz2014corrigendum, ataei2024fundamental, baadi2025degenerate} by setting $A = I_n $ in those references. 

\begin{lem}\label{lem:GULB}
For all $t>0$ and $f \in L^2_\omega(\mathbb{R}^n)$, we have for almost every $x\in \mathbb{R}^n$,
    \begin{equation*}
        e^{t\Delta_\omega}f(x)= \int_{\mathbb{R}^n} K_t(x,y)f(y) \, \mathrm{d}\omega(y),
    \end{equation*}
    where $K_t(x,y)=\Gamma(t,x;0,y)$ and $\Gamma(t,x;s,y)$ is the generalized fundamental solution for the weighted heat operator $\partial_t-\Delta_\omega$. Moreover, there exist two constants $1<C=C([ \omega  ]_{A_2},n)<\infty$ and $1<c=c([ \omega  ]_{A_2},n)<\infty$ such that 
\begin{equation}\label{eq:GULB}
     \frac{C^{-1}}{\min(\omega_{t}(x),\omega_t(y))} e^{- \frac{c\left | x-y \right |^2}{t}} \leq 
     K_t(x,y) \leq \frac{C}{\max(\omega_{t}(x),\omega_t(y))} e^{- \frac{\left | x-y \right |^2}{ct}},
\end{equation}
for all $t>0$ and for (almost) all $(x,y) \in \mathbb{R}^{2n}$. 
\end{lem}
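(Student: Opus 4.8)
The plan is to follow the classical Nash--Moser--Davies route for self-adjoint semigroups, adapted to the doubling (but not Ahlfors-regular) measure $\mathrm d\omega$; this is precisely the program of Fabes--Kenig--Serapioni and of its parabolic descendants (Chiarenza--Serapioni, Guti\'errez--Wheeden), so the references cited apply verbatim with $A=I_n$. First I would record that, because $\omega\in A_2(\mathbb R^n)$, the form $(u,v)\mapsto\int\nabla_x u\cdot\overline{\nabla_x v}\,\mathrm d\omega$ obeys a scale-invariant weighted Sobolev--Poincar\'e inequality on balls: there is $\kappa>1$ with $\big(\tfrac1{\omega(B)}\int_B|u-u_B|^{2\kappa}\,\mathrm d\omega\big)^{1/2\kappa}\lesssim r_B\big(\tfrac1{\omega(B)}\int_B|\nabla_x u|^2\,\mathrm d\omega\big)^{1/2}$ for every ball $B$ of radius $r_B$. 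Together with the doubling property \eqref{DoublingMuck}, this gives a \emph{relative} Faber--Krahn / Nash-type inequality, hence local ultracontractivity, and more precisely the on-diagonal bound $K_t(x,x)\le C\,\omega_t(x)^{-1}$. Self-adjointness then produces the kernel and the bound $K_t(x,y)\le C\,(\omega_t(x)\omega_t(y))^{-1/2}$ via $K_{2t}(x,y)=\int_{\mathbb R^n}K_t(x,z)K_t(z,y)\,\mathrm d\omega(z)\le K_{2t}(x,x)^{1/2}K_{2t}(y,y)^{1/2}$.

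Next I would install the Gaussian decay. Since $\nabla_x$ is a local operator and the form equals $\|\nabla_x\,\cdot\,\|_{2,\omega}^2$, Davies' exponential-perturbation trick applies unchanged: for Lipschitz $\phi$ with $|\nabla\phi|\le1$ and $\rho\in\mathbb R$, the twisted semigroup $e^{-\rho\phi}e^{t\Delta_\omega}e^{\rho\phi}$ is bounded on $L^2_\omega(\mathbb R^n)$ with norm $\le e^{\rho^2 t}$, which yields the Davies--Gaffney estimate $\|e^{t\Delta_\omega}(\mathbf{1}_E f)\|_{L^2_\omega(F)}\lesssim e^{-\mathrm{dist}(E,F)^2/(4t)}\|f\|_{2,\omega}$. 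Combining this with the on-diagonal bound through the usual integrated-maximum-principle argument (Davies, Coulhon--Sikora) upgrades the estimate to $K_t(x,y)\le C\,(\omega_t(x)\omega_t(y))^{-1/2}e^{-|x-y|^2/(ct)}$. Finally, for $|x-y|\le\sqrt t$ the doubling property gives $\omega_t(x)\simeq\omega_t(y)$, while for $|x-y|>\sqrt t$ the ratio $\omega_t(x)/\omega_t(y)$ grows at most polynomially in $|x-y|^2/t$ by \eqref{MuckProportion} and is absorbed into the Gaussian after enlarging $c$; in either case $(\omega_t(x)\omega_t(y))^{-1/2}$ may be replaced by $\max(\omega_t(x),\omega_t(y))^{-1}$, which is the claimed upper bound.

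For the lower bound I would invoke the scale-invariant parabolic Harnack inequality for nonnegative local solutions of $(\partial_t-\Delta_\omega)u=0$, which holds precisely because $\omega\in A_2(\mathbb R^n)$ (Chiarenza--Serapioni; Guti\'errez--Wheeden). Conservativeness $e^{t\Delta_\omega}1=1$ (valid here since the intrinsic metric is Euclidean and $\mathrm d\omega$ is doubling and supports a Poincar\'e inequality), together with the reproducing identity and Harnack applied near the diagonal, yields $K_t(x,y)\ge C^{-1}\omega_t(x)^{-1}$ for $|x-y|\le\sqrt t$; an Aronson-type chaining along $\simeq |x-y|^2/t$ balls, using Harnack and doubling at each step, then propagates this to $K_t(x,y)\ge C^{-1}\min(\omega_t(x),\omega_t(y))^{-1}e^{-c|x-y|^2/t}$.

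I expect the main obstacle to be the lower bound, that is, the parabolic Harnack inequality in the weighted setting and the bookkeeping required to carry all estimates with the location- and scale-dependent volumes $\omega_t(x)$ rather than $t^{n/2}$: the failure of Ahlfors regularity forces one to use throughout the \emph{relative} forms of the Sobolev, Faber--Krahn and Harnack inequalities, and the symmetrizations $\sqrt{\omega_t(x)\omega_t(y)}\to\max$ (upper bound) and $\to\min$ (lower bound) must be handled via \eqref{MuckProportion}. All of these ingredients are, however, exactly what the cited works supply, so in the final write-up I would simply quote them with $A=I_n$ rather than reprove them.
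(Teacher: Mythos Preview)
Your sketch is correct and follows exactly the Nash--Moser--Davies/Aronson program that the cited references \cite{cruz2014corrigendum, ataei2024fundamental, baadi2025degenerate} carry out; the paper itself does not reprove the lemma but simply quotes those works with $A=I_n$, which is also what you propose doing in your final sentence. In other words, your proposal is more detailed than the paper's treatment (a citation), but the underlying route---weighted Sobolev--Poincar\'e $\Rightarrow$ on-diagonal bound, Davies' perturbation $\Rightarrow$ Gaussian decay, parabolic Harnack and chaining $\Rightarrow$ lower bound, with the $\sqrt{\omega_t(x)\omega_t(y)}\leftrightarrow\max/\min$ conversion via \eqref{MuckProportion}---is precisely what those references contain.
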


Note that if $c>0$,  $(t,x,y)\to \frac{\max(\omega_{t}(x),\omega_t(y))}  {\min(\omega_{t}(x),\omega_t(y))} e^{- \frac{c\left | x-y \right |^2}{t}}$ is uniformly bounded,  hence having the mininum, the maximum or a multiplicative convex combination   $\omega_{t}(x)^\gamma\omega_t(y)^{1-\gamma}$ at the denominator can be used interchangeably with different constants $C,c$. 
     
We remark that there are also  H\"older continuity bounds for $K_t(x,y)$ but we do not need them. 

Following the standard argument in \cite[Lemma~19,~p.~48]{auscher1998square} (more precisely its adaptation to the weighted setting in \cite[Theorem 4]{cruz2014corrigendum}), one derives the following Gaussian estimates for complex-time and time derivatives of the heat kernels.

\begin{cor}\label{cor:semigroupe}
    For all $z \in \mathbb{C}$ with $\mathrm{Re}(z)>0$ and $f \in L^2_\omega(\mathbb{R}^n)$, we have for almost every $x\in \mathbb{R}^n$,
    \begin{equation*}
        e^{z\Delta_\omega}f(x)= \int_{\mathbb{R}^n} K_z(x,y)f(y) \, \mathrm{d}\omega(y).
    \end{equation*}
    Moreover, for all $\mu \in (0,\frac{\pi}{2})$,  there are constants $C=C([ \omega  ]_{A_2},n,\mu)<\infty$ and $c=c([ \omega  ]_{A_2},n,\mu)>0$ such that
    \begin{equation}\label{eq:GULBComplexe}
     | K_z(x,y) | \leq \frac{C}{\max(\omega_{|z|}(x),\omega_{|z|}(y))} e^{- \frac{\left | x-y \right |^2}{c|z|}}.
\end{equation}
for all $z \in S_\mu:= \left\{ z \in \mathbb{C} \setminus {0} , : |\mathrm{arg}(z)| < \mu \right\}$ and (almost) all $(x,y) \in \mathbb{R}^{2n}$. {Furthermore, for all $k \ge 1$ and all $t > 0$, the operator $(t\Delta_\omega)^k e^{t\Delta_\omega} = t^k \frac{\mathrm{d}^k}{\mathrm{d}t^k} e^{t\Delta_\omega}$ is an integral operator whose kernel satisfies Gaussian upper bounds of the same form as in \eqref{eq:GULBComplexe}, with $t$ replacing $|z|$.}
\end{cor}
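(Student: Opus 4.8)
The plan is to follow the classical passage from real‑time Gaussian bounds to complex‑time bounds (\cite[Lemma~19]{auscher1998square}, in the weighted form of \cite[Theorem~4]{cruz2014corrigendum}): first derive $L^2_\omega$ off‑diagonal (Gaffney) estimates, extend them to the sector by analyticity, and then recombine them with the real‑time pointwise bounds of Lemma~\ref{lem:GULB} through a dyadic decomposition into annuli. Concretely, the first step is to upgrade \eqref{eq:GULB} to: for all closed $E,F\subset\mathbb{R}^n$ with $d=\mathrm{dist}(E,F)$ and all $t>0$,
\[
\bigl\|\mathbf{1}_F\,e^{t\Delta_\omega}(\mathbf{1}_E f)\bigr\|_{2,\omega}\le C\,e^{-d^2/(ct)}\,\|f\|_{2,\omega},
\]
which follows from the pointwise bound by Cauchy--Schwarz, using $\int_{\mathbb{R}^n}|K_t(x,y)|\,\mathrm{d}\omega(y)\le C$ (a consequence of \eqref{eq:GULB} and the doubling property \eqref{DoublingMuck}) together with the elementary splitting $e^{-|x-y|^2/(ct)}\le e^{-d^2/(2ct)}e^{-|x-y|^2/(2ct)}$. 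Since $(e^{z\Delta_\omega})_{\mathrm{Re}\,z>0}$ is a contraction semigroup on $L^2_\omega(\mathbb{R}^n)$ and $z\mapsto\langle\mathbf{1}_F e^{z\Delta_\omega}\mathbf{1}_E u,v\rangle_{2,\omega}$ is holomorphic and bounded on $\{\mathrm{Re}\,z>0\}$, a Phragmén--Lindelöf argument on the sector then gives, for each $\mu\in(0,\frac{\pi}{2})$, constants $C,c$ depending on $\mu$ with $\|\mathbf{1}_F e^{z\Delta_\omega}\mathbf{1}_E\|_{2,\omega\to 2,\omega}\le C\,e^{-d^2/(c|z|)}$ for all $z\in S_\mu$.

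For the kernel itself, write $s=\mathrm{Re}\,z>0$ and $w'=z-s/2$, so that $e^{z\Delta_\omega}=e^{(s/4)\Delta_\omega}\circ e^{w'\Delta_\omega}\circ e^{(s/4)\Delta_\omega}$ with $\mathrm{Re}\,w'=s/2>0$, and $w'\in S_{\mu''}$, $|w'|\simeq|z|$, for some $\mu''=\mu''(\mu)<\frac{\pi}{2}$. Composing the real‑time integral operator $e^{(s/4)\Delta_\omega}$ (whose kernel $K_{s/4}(x,\cdot)$ lies in $L^2_\omega(\mathbb{R}^n)$ for each $x$ by \eqref{eq:GULB}) with the $L^2_\omega$‑bounded operator $e^{w'\Delta_\omega}\circ e^{(s/4)\Delta_\omega}$ shows that $e^{z\Delta_\omega}$ is an integral operator; denote its kernel $K_z$. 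To bound $K_z(x_0,y_0)$ for a.e.\ $(x_0,y_0)$, test against normalized indicators $g_{x_0}=\mathbf{1}_{B(x_0,\varepsilon)}/\omega(B(x_0,\varepsilon))$ and $g_{y_0}$ and let $\varepsilon\to0$ (Lebesgue differentiation): by self‑adjointness of $e^{(s/4)\Delta_\omega}$,
\[
K_z(x_0,y_0)=\lim_{\varepsilon\to0}\bigl\langle e^{w'\Delta_\omega}\phi_\varepsilon,\psi_\varepsilon\bigr\rangle_{2,\omega},\qquad \phi_\varepsilon=e^{(s/4)\Delta_\omega}g_{y_0},\quad \psi_\varepsilon=e^{(s/4)\Delta_\omega}g_{x_0}.
\]
Decompose $\phi_\varepsilon$ and $\psi_\varepsilon$ along the dyadic annuli $C_j^{y_0}=\{2^j\sqrt{|z|}\le|{\cdot}-y_0|<2^{j+1}\sqrt{|z|}\}$ and $C_k^{x_0}$, $j,k\ge0$ (the innermost ball included in the first). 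The real‑time bound \eqref{eq:GULB} and \eqref{DoublingMuck} give $\|\mathbf{1}_{C_j^{y_0}}\phi_\varepsilon\|_{2,\omega}\le C\,P(2^j)\,e^{-c4^j}\,\omega_{|z|}(y_0)^{-1/2}$ with $P$ a fixed polynomial, and similarly for $\psi_\varepsilon$; estimating the cross terms $\langle e^{w'\Delta_\omega}(\mathbf{1}_{C_j^{y_0}}\phi_\varepsilon),\mathbf{1}_{C_k^{x_0}}\psi_\varepsilon\rangle$ by the complex Gaffney estimate above, and noting $\mathrm{dist}(C_j^{y_0},C_k^{x_0})\ge|x_0-y_0|-C_0(2^j+2^k)\sqrt{|z|}$, one checks that in every regime one of the exponential factors furnishes $e^{-|x_0-y_0|^2/(c|z|)}$. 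The resulting double series over $j,k\ge0$ converges and yields $|K_z(x_0,y_0)|\le C\,\omega_{|z|}(x_0)^{-1/2}\omega_{|z|}(y_0)^{-1/2}\,e^{-|x_0-y_0|^2/(c|z|)}$; by the remark following Lemma~\ref{lem:GULB} the geometric mean in the denominator may be replaced by $\max(\omega_{|z|}(x_0),\omega_{|z|}(y_0))$ at the cost of enlarging $c$, which is \eqref{eq:GULBComplexe}.

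Finally, for the time derivatives use Cauchy's formula $t^k\frac{\mathrm{d}^k}{\mathrm{d}t^k}e^{t\Delta_\omega}=\frac{k!\,t^k}{2\pi i}\oint_{|\zeta-t|=t/2}(\zeta-t)^{-(k+1)}e^{\zeta\Delta_\omega}\,\mathrm{d}\zeta$: on this circle $\zeta\in S_{\pi/6}$ and $|\zeta|\simeq t$, so \eqref{eq:GULBComplexe} applies with uniform constants; passing to kernels inside the norm‑convergent contour integral and estimating crudely shows $(t\Delta_\omega)^k e^{t\Delta_\omega}$ is an integral operator whose kernel is dominated by $C_k\max(\omega_t(x),\omega_t(y))^{-1}e^{-|x-y|^2/(ct)}$. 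The only step with genuine content is the first one --- promoting the real‑time $L^2_\omega$ off‑diagonal decay to the full sector $S_\mu$ --- and it is classical; everything else is bookkeeping with Gaussian integrals and the doubling estimate \eqref{MuckProportion}, the double annular sum in the third step being the most calculation‑heavy part.
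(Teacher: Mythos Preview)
Your proposal is correct and follows precisely the approach the paper indicates: the paper gives no detailed argument for this corollary, merely citing \cite[Lemma~19,~p.~48]{auscher1998square} and its weighted adaptation \cite[Theorem~4]{cruz2014corrigendum}, which are exactly the references you invoke and whose mechanism (real-time Gaffney $\rightarrow$ complex-time Gaffney by analyticity, then recombination with the real-time pointwise bounds via dyadic annuli, and Cauchy's formula for the derivatives) you have faithfully unpacked. There is nothing to add.
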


With these results, we can deduce the following (classical) consequences.

\begin{prop}\label{prop:boundednessandconvergence}
We have the following statements
\begin{enumerate} 
    \item Let $1\le p\le \infty$ and $k\in \mathbb{N}$.  Then $t^k \frac{\mathrm{d}^k}{\mathrm{d}t^k} e^{t\Delta_\omega}$ extends to a bounded operator on $L^p_\omega(\mathbb{R}^n)$ with uniform bound with respect to $t>0$.
    \item Let $1\le p<\infty$ and $k\in \mathbb{N}$. Then for all $f\in L^p_\omega(\mathbb{R}^n)$, $t^k \frac{\mathrm{d}^k}{\mathrm{d}t^k} e^{t\Delta_\omega}f$ converges in $L^p_\omega(\mathbb{R}^n)$ 
    to $f$   when $t\to 0$ and $k=0$, to 0 when $t\to 0$ and {$k\ge 1$} and to 0 when $t\to \infty$ and {$k\ge 0$}. 
    \end{enumerate}
\end{prop}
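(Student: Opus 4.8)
The plan is to prove both statements simultaneously from the Gaussian kernel bounds of Lemma~\ref{lem:GULB} and Corollary~\ref{cor:semigroupe}, using the standard machinery of approximations of the identity in spaces of homogeneous type. First I would establish the uniform $L^p_\omega$-boundedness in (1). For $k=0$ the semigroup is sub-Markovian (contractive on $L^\infty_\omega$ by $\int K_t(x,y)\,\mathrm d\omega(y)=1$, which follows since $e^{t\Delta_\omega}\mathbf 1=\mathbf 1$, and contractive on $L^1_\omega$ by the symmetry $K_t(x,y)\omega(y)=K_t(y,x)\omega(x)$ built into the self-adjointness on $L^2_\omega$), hence bounded on every $L^p_\omega$ by interpolation. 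For $k\ge 1$ one uses the kernel bound from the last sentence of Corollary~\ref{cor:semigroupe}: the operator $(t\Delta_\omega)^k e^{t\Delta_\omega}$ has a kernel dominated by $\frac{C}{\omega_t(x)}e^{-|x-y|^2/ct}$, and the doubling property \eqref{DoublingMuck} (equivalently \eqref{MuckProportion}) gives, for each fixed $t$, $\int_{\mathbb R^n}\frac{1}{\omega_t(x)}e^{-|x-y|^2/ct}\,\mathrm d\omega(y)\le C'$ uniformly in $t$ and $x$, and symmetrically with the roles of $x,y$ swapped using $\omega_t(x)\simeq\omega_t(y)$ when $|x-y|^2\lesssim t$; then Schur's test yields uniform $L^p_\omega$-boundedness for all $1\le p\le\infty$.

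Next, for (2), I would prove the $L^p_\omega$-convergence for $1\le p<\infty$. The case $t\to 0$, $k=0$ is the statement that $e^{t\Delta_\omega}$ is an approximate identity: since $\mathcal D(\mathbb R^n)$ is dense in $L^p_\omega(\mathbb R^n)$ (as $C_c$ is dense and $\mathrm d\omega$ is Radon) and the operators are uniformly bounded by (1), it suffices to check convergence for $f\in\mathcal D(\mathbb R^n)$; writing $e^{t\Delta_\omega}f(x)-f(x)=\int K_t(x,y)(f(y)-f(x))\,\mathrm d\omega(y)$ and splitting into $|x-y|\le\delta$ and $|x-y|>\delta$, the first piece is small by uniform continuity of $f$ and the normalization, the second is controlled by the Gaussian tail $e^{-\delta^2/ct}\to 0$ together with $\|f\|_\infty$ and local integrability of $\mathrm d\omega$ near $\mathrm{supp}\,f$ — one obtains pointwise convergence with a dominating function, and dominated convergence (or a direct norm estimate) closes it. For $t\to 0$, $k\ge 1$: on $f\in\mathcal D(\mathbb R^n)\subset D(-\Delta_\omega)$ one has $(t\Delta_\omega)e^{t\Delta_\omega}f=t\,e^{t\Delta_\omega}(\Delta_\omega f)\to 0$ in $L^p_\omega$ since $\Delta_\omega f\in L^p_\omega$ and the semigroup is uniformly bounded, and iterating handles $k\ge 2$; then density plus uniform boundedness from (1) gives the general $f\in L^p_\omega$. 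For $t\to\infty$ and any $k\ge 0$: here I would use the pointwise kernel bound together with the fact that $\omega_t(x)=\omega(B(x,\sqrt t))\to\infty$ as $t\to\infty$ (because $\mathrm d\omega$ has infinite mass, as recorded after the definition of $-\Delta_\omega$); more quantitatively, \eqref{MuckProportion} gives $\omega_t(x)\gtrsim t^{\eta}$ for large $t$, so for $f\in\mathcal D(\mathbb R^n)$ with support in a ball $B$, $|(t\Delta_\omega)^k e^{t\Delta_\omega}f(x)|\le\frac{C}{\omega_t(x)}\|f\|_{L^1_\omega}\le C\,t^{-\eta}\|f\|_{L^1_\omega}$ on a fixed large ball, with the Gaussian factor killing the contribution from far away, and estimating the $L^p_\omega$ norm by splitting $\mathbb R^n$ into $B(0,R)$ and its complement one sees the norm tends to $0$; density and (1) finish it.

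The main obstacle I expect is the $t\to\infty$ decay: the naive bound $\|(t\Delta_\omega)^k e^{t\Delta_\omega}f\|_{L^p_\omega}\le \|f\|_{L^p_\omega}$ gives no decay, and turning the pointwise estimate $|(t\Delta_\omega)^k e^{t\Delta_\omega}f(x)|\lesssim \omega_t(x)^{-1}\|f\|_{L^1_\omega}$ into an $L^p_\omega$-smallness statement requires care because $\omega_t(x)^{-1}$ is not in $L^p_\omega(\mathbb R^n)$ (its $L^p_\omega$ norm is infinite). The resolution is to not seek a uniform-in-$x$ bound but to exploit the Gaussian localization: for $|x|\le\sqrt t$ one has $\omega_t(x)\gtrsim\omega_t(0)\to\infty$ giving smallness on that ball, while for $|x|>\sqrt t$ the factor $e^{-|x-y|^2/ct}$ (with $y\in B$ fixed, $|x|$ large) provides the decay and integrability. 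Alternatively — and perhaps more cleanly — one can avoid this entirely by a soft functional-calculus argument: for $k\ge 1$ the family $(t\Delta_\omega)^k e^{t\Delta_\omega}$ is uniformly bounded on $L^p_\omega$ by (1), it tends to $0$ strongly as $t\to\infty$ on the dense set $D(-\Delta_\omega)\cap R(-\Delta_\omega)$ (using $(t\Delta_\omega)^k e^{t\Delta_\omega}g = t^{-1}(t\Delta_\omega)^{k-1}e^{t\Delta_\omega}(\Delta_\omega^{-1}\text{-preimage})$-type identities, i.e. the spectral-theoretic fact that $\lambda^k e^{-t\lambda}\to 0$ pointwise on $(0,\infty)$ with the decomposition $L^p_\omega=\overline{R(\Delta_\omega)}$ valid since $\Delta_\omega$ is injective with dense range), and density closes it; for $k=0$, strong convergence to $0$ at $\infty$ holds on $R(\Delta_\omega)$ since $e^{t\Delta_\omega}\Delta_\omega g=\partial_t(e^{t\Delta_\omega}g)$ integrates, hence on its $L^p_\omega$-closure, which is all of $L^p_\omega$. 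I would present whichever of the two is shorter, likely the kernel argument for $k\ge1$ and the density argument for $k=0$.
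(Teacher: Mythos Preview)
Your proposal is correct and follows the paper's approach, which is far terser: the paper simply invokes Schur's test for (1) via the bound $\sup_x\int |t^k\partial_t^k K_t(x,y)|\,\mathrm d\omega(y)+\sup_y\int |t^k\partial_t^k K_t(x,y)|\,\mathrm d\omega(x)\le C_k$, and for (2) says the convergences hold on $L^2_\omega$ ``from the construction of the semigroup'' and ``extend to all $1\le p<\infty$ by standard arguments using the Gaussian upper bounds'' --- you have written out precisely those standard arguments.

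One small slip: self-adjointness on $L^2_\omega$ gives $K_t(x,y)=K_t(y,x)$ (the kernel is already taken with respect to $\mathrm d\omega$), not $K_t(x,y)\omega(y)=K_t(y,x)\omega(x)$. This does not affect your conclusion, and in any case your Schur-test argument for $k\ge 1$ works verbatim for $k=0$, so the separate sub-Markovian discussion (and the conservativeness $e^{t\Delta_\omega}\mathbf 1=\mathbf 1$, which is true but not entirely free) can be bypassed.
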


\begin{proof}
    The first item is a classical consequence of the Schur Lemma together with 
    \begin{equation*}
        \supess_x \int_{\mathbb{R}^n} \bigg|t^k \frac{\mathrm{d}^k}{\mathrm{d}t^k} K_t(x,y)\bigg| \, \mathrm{d}\omega(y) + \supess_y \int_{\mathbb{R}^n} \bigg|t^k \frac{\mathrm{d}^k}{\mathrm{d}t^k} K_t(x,y)\bigg| \, \mathrm{d}\omega(x) \le C_k
    \end{equation*}
    that follow from the Gaussian upper bounds. 
    The convergences come from the construction of the semigroup when $p=2$. They extend to all $1\le p<\infty$ by standard arguments using the Gaussian upper bounds. 
\end{proof}

More properties, such as the boundedness of the $H^\infty$-calculus of $-\Delta_\omega$ and of its Riesz transform on $L^p_\omega(\mathbb{R}^n)$ for $p \neq 2$, also hold \cite{cruz2017kato}. However, we point out that the above results are all we need for the proof of Theorem \ref{thm:principal}.

\section{Hardy-Littlewood-Sobolev estimates}

In this section, we begin with key lemmas that play a central role in   the proof of Theorem \ref{thm:principal} that we present next. We then discuss several extensions and refinements: first, we extend the result to weighted Lorentz spaces; second, to the functional calculus; and eventually, we show that the condition imposed on the weight is sharp.

\subsection{Key lemmas}
Recall that the semigroup extends to a bounded operator on $L^p_\omega(\mathbb{R}^n)$ for all $1 \leq p \leq \infty$. 
We wish to obtain "$L^p-L^q$ bounds" when $p<q$. We record the following key lemma on the boundedness of the semigroup, with a bound involving a suitable power of $t$, provided the weight $\omega$ lies in a reverse H\"older class. We shall see that the open ended property of the reverse H\"older condition assumed in this proof will be used in an essential way.
\begin{lem}\label{lem:HLS}
    Let $1 < p < q<\infty$. Assume that $\omega \in RH_{\frac{q}{p}}(\mathbb{R}^n)$. Then, there exist two constants $C = C([ \omega  ]_{A_2},[ \omega  ]_{RH_{\frac{q}{p}}},n,p,q)<\infty$ and $\varepsilon_{\omega, p,q}=\varepsilon(  [ \omega  ]_{RH_{\frac{q}{p}}},p,q,n)>0$ such that 
    \begin{equation*}
        \| e^{t\Delta_\omega} f \|_{L^q_{{\omega}^{q/p+\varepsilon}}(\mathbb{R}^n)} \leq C t^{-\frac{n}{2}(\frac{1}{p}-\frac{1}{q})} \|  f \|_{L^p_{\omega^{1+(p/q)\varepsilon}}(\mathbb{R}^n)},
        \end{equation*}
    for all $t>0$, $\varepsilon \in (-\varepsilon_{\omega, p,q},\varepsilon_{\omega, p,q})$ and all $f\in L^2_\omega(\mathbb{R}^n) \cap  L^p_{\omega^{1+(p/q)\varepsilon}}(\mathbb{R}^n)$. In particular, 
    $e^{t\Delta_\omega}$ extends to a bounded operator from $L^p_{\omega^{1+(p/q)\varepsilon}}(\mathbb{R}^n)$ to $L^q_{{\omega}^{q/p+\varepsilon}}(\mathbb{R}^n)$.
    \end{lem}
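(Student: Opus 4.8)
The plan is to estimate the kernel $K_t(x,y)$ of $e^{t\Delta_\omega}$ directly using the Gaussian upper bound \eqref{eq:GULB} and reduce the claimed mapping property to a weighted Schur test. Concretely, to prove
$$\| e^{t\Delta_\omega} f \|_{L^q_{\omega^{q/p+\varepsilon}}} \le C t^{-\alpha}\| f \|_{L^p_{\omega^{1+(p/q)\varepsilon}}}, \qquad \alpha=\tfrac n2(\tfrac1p-\tfrac1q),$$
I would dualize. Since $(L^q_{\mu})^* = L^{q'}_{\mu}$ for any measure $\mu$, it suffices to bound the bilinear form $\iint K_t(x,y) f(y)\, g(x)\, \mathrm d\omega(y)\,\mathrm d\omega(x)$ for $g \in L^{q'}_{\omega^{q/p+\varepsilon}}$. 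The natural move is to write the two ambient measures as $\mathrm d\omega(y) = \omega(y)^{-(p/q)\varepsilon}\,\omega(y)^{1+(p/q)\varepsilon}\,\mathrm dx$ and $\mathrm d\omega(x) = \omega(x)^{\varepsilon+q/p-1}\cdot\omega(x)^{1-(q/p)\cdot\text{(something)}}$, so that after inserting the kernel bound one is left with a Schur-type kernel acting between the weighted $L^p$ and $L^q$ spaces with exponents $1+(p/q)\varepsilon$ and $q/p+\varepsilon$. I would split the exponent $\frac1q + \frac{\varepsilon p}{q}$ appropriately; the bookkeeping is arranged precisely so that when $\varepsilon = 0$ one recovers the clean statement of the theorem.

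\textbf{Key steps, in order.} (1) Reduce to a pointwise kernel bound: replace $K_t(x,y)$ by $\dfrac{C}{\omega_t(x)^{\theta}\omega_t(y)^{1-\theta}} e^{-|x-y|^2/(ct)}$ for a convenient $\theta$ (the remark after Lemma \ref{lem:GULB} lets us choose $\theta$ freely), and split the Gaussian $e^{-|x-y|^2/(ct)}$ into annuli $|x-y|\sim 2^j\sqrt t$. (2) On each annulus, the problem localizes to a ball of radius $\sim 2^j\sqrt t$: use the reverse Hölder inequality $\omega \in RH_{q/p}$, together with the self-improvement (Proposition \ref{prop:weights}(4)) which gives $\omega \in RH_{q/p+\delta}$ for some $\delta>0$ — this is exactly where the open-endedness and the parameter $\varepsilon_{\omega,p,q}$ enter, with $\varepsilon$ chosen so that $q/p+\varepsilon$ still lies below the improved reverse Hölder exponent. (3) On the ball, compare the weighted averages $\big(\fint \omega^{q/p+\varepsilon}\big)^{p/q}$ and $\fint \omega$ using $RH_{q/p+\delta}$, and convert $\omega_t(x),\omega_t(y)$-powers into $|B|$-powers via \eqref{MuckProportion}; the powers of $|B| \sim (2^j\sqrt t)^n$ collect into the factor $t^{-\alpha}$ (up to the scaling count $\frac n2(\frac1p-\frac1q)$) times a geometric factor $2^{-jN}$ summable over $j$ because of the remaining Gaussian decay. (4) Sum the annular pieces, apply Hölder in the ball to close the $L^p \to L^q$ bound, and then sum over a dyadic grid covering $\mathbb R^n$; the $A_2$-doubling \eqref{DoublingMuck} and \eqref{MuckProportion} control the overlaps uniformly. (5) Finally, extend from $f\in L^2_\omega \cap L^p_{\omega^{1+(p/q)\varepsilon}}$ to the full space by density.

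\textbf{Main obstacle.} The delicate point is step (2)–(3): tracking how the reverse Hölder constant interacts with the annular decomposition. The estimate $\omega(B(x,2^j\sqrt t))$ grows in $j$ and one must be sure the Gaussian tail $e^{-c\,2^{2j}}$ beats this growth — here \eqref{MuckProportion} gives only polynomial control $\omega(B(x,2^j\sqrt t)) \lesssim \beta\, 2^{2jn\eta}\,\omega_t(x)$ or so, which is fine against Gaussian decay, but one must also handle the \emph{ratio} $\omega_t(y)/\omega_t(x)$ when $x,y$ are far apart, using that this ratio is $\lesssim (1+|x-y|/\sqrt t)^{N}$ by doubling. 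Equivalently, one must verify that the effective Schur kernel has the right homogeneity \emph{and} integrates against $\mathrm d\omega$ uniformly in $t$; getting the exponent of $t$ to come out as exactly $-\alpha$ (rather than something $\varepsilon$-dependent) is the bookkeeping crux, and the choice of $\theta$ in step (1) and of the split in step (3) must be coordinated to make the $\varepsilon$-contributions cancel. An alternative, cleaner route for the main case $\varepsilon=0$: interpolate the trivial bound $e^{t\Delta_\omega}:L^1_\omega\to L^\infty_{(\cdot)}$ coming from $K_t(x,y)\le C/\max(\omega_t(x),\omega_t(y))$ against $L^\infty_\omega\to L^\infty_\omega$, but the $\varepsilon$-perturbed statement seems to genuinely need the direct Schur argument, so I would commit to that.
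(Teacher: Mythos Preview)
Your plan shares the paper's essential ingredients: the Gaussian upper bound with a free convex-combination parameter $\theta$ in the denominator, the self-improvement of reverse H\"older (Proposition~\ref{prop:weights}(4)) to open up the $\varepsilon$-window, and a decomposition at scale $\sqrt t$. The execution differs. The paper does not dualize or invoke a Schur test; instead it covers $\mathbb{R}^n$ by a dyadic grid $(Q_k)_{k\in\mathbb{Z}^n}$ of side $\sim\sqrt t$, applies H\"older in the inner integral with exponents $(p,p')$ to extract $\|f\|_{L^p_{\omega^{1+(p/q)\varepsilon}}(Q_\ell)}$, replaces the Gaussian by $e^{-|k-\ell|^2/c}$, and then closes the $L^p\to L^q$ passage via the \emph{discrete Young convolution inequality} $\ell^s * \ell^p \hookrightarrow \ell^q$ with $\tfrac1p+\tfrac1s=1+\tfrac1q$. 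Your annuli-plus-grid scheme can be made to work, but note that a Schur test alone only yields $L^p\to L^p$; the jump in integrability genuinely requires a Young-type step, which the cube-grid route supplies cleanly without a separate annular layer (the separation $|k-\ell|$ already encodes the annuli).

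The substantive gap is that you correctly flag the bookkeeping crux --- arranging the $\varepsilon$-contributions to cancel so the $t$-exponent comes out as exactly $-\alpha$ --- but do not say how to resolve it. The paper's answer is to take $\theta=\gamma:=\tfrac1p+\tfrac{\varepsilon}{q}$. With this choice the two reverse H\"older applications are for the exponents $q/p+\varepsilon=q\gamma$ on the $x$-side and $1-\tfrac{p'}{q}\varepsilon$ on the $y$-side (the latter arising from H\"older with exponent $p'$). On each cube these produce, after cancelling the $\omega(Q)$-factors against the kernel denominators, the powers $|Q_k|^{1-(q/p+\varepsilon)}$ and $|Q_\ell|^{\varepsilon/q}$; since all cubes have volume $\sim t^{n/2}$, the combined contribution is $t^{(n/2)\varepsilon}\cdot t^{(n/2)(1-q/p-\varepsilon)}=t^{(n/2)(1-q/p)}$, with the $\varepsilon$'s cancelling exactly. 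Until you pin down this choice of $\theta$, your step (3) remains a hope rather than a computation.
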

\begin{proof} The last point follows by density once the inequality is established and we prove it next. 

    Given that $\omega \in RH_{\frac{q}{p}}(\mathbb{R}^n)$, point (4) of Proposition \ref{prop:weights} and Hölder's inequality imply that there exist {$\varepsilon_{\omega, p,q} = \varepsilon([\omega]_{RH_{\frac{q}{p}}},p,q,n) \in \big(0, \min\big(\frac q p, \frac{q}{p'}\big)\big)$} and a constant $C = C([\omega]_{RH_{\frac{q}{p}}},p,q,n) <\infty$ such that, for all $\varepsilon \in (-\varepsilon_{\omega, p,q}, \varepsilon_{\omega, p,q})$ and all cubes $Q \subset \mathbb{R}^n$, the following inequalities hold
    \begin{equation}\label{eq:theta}
        \left ( \frac{1}{|Q|} \int_Q \omega^{\frac{q}{p}+\varepsilon}(x) \ \mathrm{d}x   \right )^{\frac{1}{\frac{q}{p}+\varepsilon}} \leq \frac{C}{|Q|} \int_Q \omega(x) \ \mathrm{d}x,
    \end{equation} 
    \begin{equation}\label{eq:thetatilde}
        \left ( \frac{1}{|Q|} \int_Q \omega^{1-\frac{p'}{q}\varepsilon}(x) \ \mathrm{d}x   \right )^{\frac{1}{1-\frac{p'}{q}\varepsilon} }\leq  \frac{C}{|Q|} \int_Q \omega(x) \ \mathrm{d}x.
    \end{equation}
    We fix $\varepsilon \in (-\varepsilon_{\omega, p,q},\varepsilon_{\omega, p,q})$ and set $\gamma= \frac{1}{p}+\frac{\varepsilon}{q} \in (0,1)$. We fix $f\in L^p_{\omega^{1+(p/q)\varepsilon}}(\mathbb{R}^n) \cap L^2_\omega(\mathbb{R}^n)$ . We fix $t>0$ and let $m \in \mathbb{Z}$ be the integer verifying $\frac{\sqrt{t}}{2}<\frac{1}{2^m}\le \sqrt{t}$. Let $(Q_{k})_{k\in \mathbb{Z}^n}$ the family of dyadic cubes with side lengths $\frac{1}{2^m}$. Using the upper bound for the heat kernel in Lemma \ref{lem:GULB}, we have  
    \begin{align*}
        \| e^{t\Delta_\omega} f &\|^q_{L^q_{{\omega}^{q/p+\varepsilon}}(\mathbb{R}^n)}= \int_{\mathbb{R}^n} |e^{t\Delta_\omega} f(x)|^q\  \omega^{q/p+\varepsilon}(x) \ \mathrm{d}x 
        \\&\leq C \int_{\mathbb{R}^n} \bigg( \int_{\mathbb{R}^n} \frac{e^{-\frac{|x-y|^2}{ct}}}{\omega_t(x)^{\gamma}\omega_t(y)^{1-\gamma}}|f(y)| \ \mathrm{d} \omega(y) \bigg)^q\  \omega^{q/p+\varepsilon}(x) \ \mathrm{d}x 
        \\& \leq C \sum_{k\in \mathbb{Z}^n} \int_{Q_{k}} \bigg( \sum_{\ell \in \mathbb{Z}^n} \int_{Q_{\ell}} \frac{e^{-\frac{|x-y|^2}{ct}}}{\omega_t(y)^{1-\gamma}}|f(y)| \frac{\omega^{\frac{\varepsilon}{q}}(y)}{\omega^{\frac{\varepsilon}{q}}(y)} \ \mathrm{d} \omega(y) \bigg)^q\ \frac{\omega^{q/p+\varepsilon}(x)}{\omega_t(x)^{q\gamma}} \ \mathrm{d}x
        \\& \leq C \sum_{k\in \mathbb{Z}^n} \int_{Q_{k}} \bigg( \sum_{\ell\in \mathbb{Z}^n} \bigg( \int_{Q_{\ell}} \frac{e^{-p'\frac{|x-y|^2}{ct}}}{\omega_t(y)^{p'(1-\gamma)}} \omega(y)^{-\frac{p'}{q}\varepsilon} \ \mathrm{d} \omega(y) \bigg)^{\frac{1}{p'}} \|\omega^{\frac{\varepsilon}{q}}  \, f \|_{L^p_\omega(Q_{\ell})} \bigg)^q \ \frac{\omega^{q/p+\varepsilon}(x)}{\omega_t(x)^{q \gamma}} \ \mathrm{d}x
        \\& \leq C \sum_{k\in \mathbb{Z}^n} \int_{Q_{k}} \bigg( \sum_{\ell \in \mathbb{Z}^n} e^{-\frac{d(Q_{k},Q_{\ell})^2}{ct}} \bigg( \int_{Q_{\ell}} \frac{\omega(y)^{1-\frac{p'}{q}\varepsilon}}{\omega_t(y)^{p'(1-\gamma)}} \ \mathrm{d} y \bigg )^{\frac{1}{p'}} \| f \|_{L^p_{\omega^{1+(p/q)\varepsilon}}(Q_{\ell})} \bigg)^q \ \frac{\omega^{q/p+\varepsilon}(x)}{\omega_t(x)^{q \gamma}} \ \mathrm{d}x 
        \\& = C \sum_{k\in \mathbb{Z}^n}  \bigg( \sum_{\ell \in \mathbb{Z}^n} e^{-\frac{d(Q_{k},Q_{\ell})^2}{ct}} \bigg( \int_{Q_{\ell}} \frac{\omega(y)^{1-\frac{p'}{q}\varepsilon}}{\omega_t(y)^{p'(1-\gamma)}} \ \mathrm{d} y \bigg)^{\frac{1}{p'}} \| f \|_{L^p_{\omega^{1+(p/q)\varepsilon}}(Q_{\ell})} \bigg)^q \int_{Q_{k}} \frac{\omega^{q/p+\varepsilon}(x)}{\omega_t(x)^{q \gamma}} \ \mathrm{d}x.
    \end{align*}
    Using the doubling property \eqref{DoublingMuck} with the equivalence of norms on $\mathbb{R}^n$, we see easily that there exists a constant $C=C(n,D)\ge 1 $ such that 
    $$ \frac{1}{C} \omega(Q_{k}) \leq \omega_t(x) \leq C \omega(Q_{k}), \quad \text{for all} \ k\in \mathbb{Z}^n \ \text{and all} \ x \in Q_{k}.$$
    Furthermore, for all $k,\ell \in \mathbb{Z}^n$ with $k \neq \ell$, we have
    $$d(Q_{k},Q_{\ell}) \ge C(n) \bigg(\frac{|k-\ell|}{2^m}-\frac{1}{2^m}\bigg) \ge C(n)(|k-\ell|-1)\frac{\sqrt{t}}{2}.$$
    Thus, we obtain 
    \begin{equation}\label{eq:smgroupe}
        \| e^{t\Delta_\omega} f \|^q_{L^q_{{\omega}^{q/p+\varepsilon}}(\mathbb{R}^n)} \leq C \sum_{k\in \mathbb{Z}^n}  \bigg( \sum_{\ell\in \mathbb{Z}^n} e^{-\frac{|k-\ell|^2}{c}} \frac{\big( \int_{Q_{\ell}} \omega(y)^{1-\frac{p'}{q}\varepsilon}\ \mathrm{d} y \big)^{\frac{1}{p'}}}{\omega(Q_\ell)^{1-\gamma}}  \| f \|_{L^p_{\omega^{1+(p/q)\varepsilon}}(Q_{\ell})} \bigg)^q \frac{\int_{Q_{k}} \omega^{\frac{q}{p}+\varepsilon}(x) \ \mathrm{d}x}{\omega(Q_k)^{q\gamma}}.
    \end{equation}
    In addition, applying inequality \eqref{eq:thetatilde} yields: 
    \begin{equation}\label{eq:1}
        \left ( \int_{Q_{\ell}} \omega(y)^{1-\frac{p'}{q}\varepsilon}\ \mathrm{d} y \right )^{\frac{1}{p'}} \leq  |Q_\ell|^{\frac{1}{p'}} \left( \frac{\omega(Q_\ell)}{|Q_\ell|} \right)^{\frac{1-\frac{p'}{q}\varepsilon}{p'}}=  |Q_\ell|^{\frac{\varepsilon}{q}} \omega(Q_\ell)^{\frac{1}{p'}-\frac{\varepsilon}{q}}= |Q_\ell|^{\frac{\varepsilon}{q}} \omega(Q_\ell)^{1-\gamma}.
    \end{equation}
    Likewise, since $\frac{q}{p} + \varepsilon = q \gamma$, it follows from \eqref{eq:theta} that:
    \begin{equation}\label{eq:2}
        \frac{\int_{Q_{k}} \omega^{q/p+\varepsilon}(x) \ \mathrm{d}x}{\omega(Q_k)^{q\gamma}} \leq C |Q_k|^{1-(\frac{q}{p}+\varepsilon)}.
    \end{equation}
    Since $|Q_k| = |Q_\ell| \sim t^{n/2}$, combining \eqref{eq:1} with \eqref{eq:2}, inequality \eqref{eq:smgroupe} takes the form:
    \begin{align*}
        \| e^{t\Delta_\omega} f \|^q_{L^q_{{\omega}^{q/p+\varepsilon}}(\mathbb{R}^n)} \leq &C t^{\frac{n}{2} \varepsilon} \times t^{\frac{n}{2}(1-(\frac{q}{p}+\varepsilon))} \sum_{k\in \mathbb{Z}^n}  \bigg ( \sum_{\ell\in \mathbb{Z}^n} e^{-\frac{|k-\ell|^2}{c}}   \| f \|_{L^p_{\omega^{1+(p/q)\varepsilon}}(Q_{\ell})} \bigg )^q 
        \\& = C \times t^{\frac{n}{2}(1-\frac{q}{p})} \sum_{k\in \mathbb{Z}^n}  \bigg ( \sum_{\ell\in \mathbb{Z}^n} e^{-\frac{|k-\ell|^2}{c}}   \| f \|_{L^p_{\omega^{1+(p/q)\varepsilon}}(Q_{\ell})} \bigg )^q. 
    \end{align*}
    Using Young's discrete convolution inequality with $s>1$ such that $\frac{1}{p}+\frac{1}{s}=1+\frac{1}{q}$, we deduce that 
    \begin{align*}
        \| e^{t\Delta_\omega} f \|^q_{L^q_{{\omega}^{q/p+\varepsilon}}(\mathbb{R}^n)} &\leq C t^{\frac{n}{2}(1-\frac{q}{p})} \bigg ( \sum_{k\in \mathbb{Z}^n} e^{-s\frac{|k|^2}{c}} \bigg )^{q/s} \bigg ( \sum_{k\in \mathbb{Z}^n}  \|f \|^p_{L^p_{\omega^{1+(p/q)\varepsilon}}(Q_{k})} \bigg )^{q/p} \\&= C t^{\frac{n}{2}(1-\frac{q}{p})} \bigg( \sum_{k\in \mathbb{Z}^n} e^{-s\frac{|k|^2}{c}} \bigg )^{q/s} \|f \|^q_{L^p_{\omega^{1+(p/q)\varepsilon}}(\mathbb{R}^n)}.
    \end{align*}
     Thus, we obtain
    $$\| e^{t\Delta_\omega} f \|_{L^q_{{\omega}^{q/p+\varepsilon}}(\mathbb{R}^n)} \leq C t^{-\frac{n}{2}(\frac{1}{p}-\frac{1}{q})} \|  f \|_{L^p_{\omega^{1+(p/q)\varepsilon}}(\mathbb{R}^n)}.$$
\end{proof}

When $1<p = q<\infty$, since $\omega \in RH_{s_\omega}(\mathbb{R}^n)$ for some $s_\omega = s([\omega]_{A_2}) > 1$, the proof of Lemma \ref{lem:HLS} still applies with $\varepsilon\ne 0$.

 It is  not clear what happens when $p=1 $ or  $q=\infty$  when $\varepsilon\ne 0$.    In the case $\varepsilon = 0$,  we obtain the following lemma when $1 \leq p < q \leq \infty$ by adapting the above proof.

\begin{lem}\label{lem:extrémités} 
    Let $1 \le p < q \le \infty$. Assume that $\omega \in RH_{\frac{q}{p}}(\mathbb{R}^n)$. Then, there exists a constant $C = C([ \omega  ]_{A_2},[ \omega  ]_{RH_{\frac{q}{p}}},n,p,q)<\infty$ such that 
    \begin{equation*}
        \| e^{t\Delta_\omega} f \|_{L^q_{{\omega}^{q/p}}(\mathbb{R}^n)} \leq C t^{-\frac{n}{2}(\frac{1}{p}-\frac{1}{q})} \|  f \|_{L^p_{\omega}(\mathbb{R}^n)},
    \end{equation*}
    for all $t>0$ and all $f\in L^2_\omega(\mathbb{R}^n) \cap  L^p_{\omega}(\mathbb{R}^n)$. In particular, $e^{t\Delta_\omega}$ extends to a bounded operator from $L^p_{\omega}(\mathbb{R}^n)$ to $L^q_{{\omega}^{q/p}}(\mathbb{R}^n)$. Here, ${L^q_{{\omega}^{q/p}}(\mathbb{R}^n)}$ when $q=\infty$ is interpreted as the space of measurable functions $f$ such that $\omega^{1/p} f \in L^\infty_{\mathrm{d}x}(\mathbb{R}^n)$.
\end{lem}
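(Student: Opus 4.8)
The plan is to re-run the dyadic-cube argument in the borderline case $\varepsilon=0$ of the proof of Lemma~\ref{lem:HLS}, treating the two endpoints $p=1$ and $q=\infty$ separately because there H\"older's inequality degenerates. When $1<p<q<\infty$ there is nothing to do: setting $\varepsilon=0$ forces $\gamma=\frac1p$, turns \eqref{eq:thetatilde} into a tautology and \eqref{eq:theta} into exactly the reverse H\"older inequality for $\omega\in RH_{q/p}(\mathbb{R}^n)$, so the chain of estimates in the proof of Lemma~\ref{lem:HLS} applies verbatim and produces the factor $t^{-\frac n2(\frac1p-\frac1q)}$.

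\emph{Case $p=1$, $q<\infty$.} I would take $\gamma=1$ in the Gaussian bound of Lemma~\ref{lem:GULB}, so that no H\"older step on the cubes is needed. With $(Q_k)_{k\in\mathbb{Z}^n}$ the dyadic cubes of side length comparable to $\sqrt t$ and $x\in Q_k$,
\[
|e^{t\Delta_\omega}f(x)|\le \frac{C}{\omega_t(x)}\int_{\mathbb{R}^n}e^{-|x-y|^2/(ct)}|f(y)|\,\mathrm{d}\omega(y)\le \frac{C}{\omega_t(x)}\sum_{\ell\in\mathbb{Z}^n}e^{-d(Q_k,Q_\ell)^2/(ct)}\|f\|_{L^1_\omega(Q_\ell)}.
\]
Then one raises this to the power $q$, integrates $\omega^q\,\mathrm{d}x$ over $Q_k$, uses $\omega_t(x)\sim\omega(Q_k)$ on $Q_k$ together with the reverse H\"older bound $\int_{Q_k}\omega^q\le [\omega]_{RH_q}^q|Q_k|^{1-q}\omega(Q_k)^q$ (which is $RH_q=RH_{q/p}$), replaces $d(Q_k,Q_\ell)$ by $(|k-\ell|-1)\sqrt t$ exactly as in Lemma~\ref{lem:HLS}, sums over $k$, and finishes with Young's convolution inequality $\|a\ast b\|_{\ell^q}\le\|a\|_{\ell^q}\|b\|_{\ell^1}$ applied to the Gaussian tail $a=(e^{-|k|^2/c})_k$ and $b=(\|f\|_{L^1_\omega(Q_k)})_k$, whose $\ell^1$-norm is $\|f\|_{L^1_\omega(\mathbb{R}^n)}$. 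Collecting the powers $|Q_k|\sim t^{n/2}$ gives $\|e^{t\Delta_\omega}f\|_{L^q_{\omega^q}}\le Ct^{-\frac n2(1-\frac1q)}\|f\|_{L^1_\omega}$, which is the claim since $p=1$.

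\emph{Case $q=\infty$.} Here the estimate is pointwise. With $\gamma=\frac1p$, so $1-\gamma=\frac1{p'}$ and $p'(1-\gamma)=1$, H\"older's inequality in $y$ against $\mathrm{d}\omega$ yields, for a.e.\ $x$,
\[
|e^{t\Delta_\omega}f(x)|\le \frac{C}{\omega_t(x)^{1/p}}\,\|f\|_{L^p_\omega(\mathbb{R}^n)}\Big(\int_{\mathbb{R}^n}\frac{e^{-|x-y|^2/(c't)}}{\omega_t(y)}\,\mathrm{d}\omega(y)\Big)^{1/p'}
\]
(for $p=1$ this H\"older step is empty and $\frac{C}{\omega_t(x)}\|f\|_{L^1_\omega}$ is already the bound). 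Decomposing $\mathbb{R}^n$ into the dyadic annuli $A_j=\{2^j\sqrt t\le|x-y|<2^{j+1}\sqrt t\}$ and using the doubling property \eqref{DoublingMuck}, which on $A_j$ gives both $\omega_t(y)\ge D^{-(j+2)}\omega_t(x)$ and $\omega(B(x,2^{j+1}\sqrt t))\le D^{j+1}\omega_t(x)$, the integral is bounded by a convergent sum of the form $\sum_{j\ge0} CD^{2j}e^{-4^j/c'}\le C([\omega]_{A_2},n)$. Hence $\omega_t(x)^{1/p}|e^{t\Delta_\omega}f(x)|\le C\|f\|_{L^p_\omega}$ for a.e.\ $x$; multiplying by $\omega(x)^{1/p}$ and invoking the $RH_\infty=RH_{q/p}$ condition in the pointwise form $\omega(x)\le C\,|B(x,\sqrt t)|^{-1}\omega(B(x,\sqrt t))=C_n\,t^{-n/2}\omega_t(x)$, valid for a.e.\ $x$, gives $\omega(x)^{1/p}|e^{t\Delta_\omega}f(x)|\le Ct^{-n/(2p)}\|f\|_{L^p_\omega}$, i.e.\ the stated bound since $\frac n2(\frac1p-\frac1q)=\frac n{2p}$. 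In every case the bounded extension of $e^{t\Delta_\omega}$ follows from density of $L^2_\omega(\mathbb{R}^n)\cap L^p_\omega(\mathbb{R}^n)$ in $L^p_\omega(\mathbb{R}^n)$, as at the start of the proof of Lemma~\ref{lem:HLS}.

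\emph{Main obstacle.} None of this is deep, and the computation largely duplicates that of Lemma~\ref{lem:HLS}; the only genuinely new ingredients are, in the case $q=\infty$, the translation of the $RH_\infty$ average condition into the pointwise inequality $\omega(x)\lesssim t^{-n/2}\omega_t(x)$ and the uniform bound on $\int_{\mathbb{R}^n}e^{-|x-y|^2/(c't)}\omega_t(y)^{-1}\,\mathrm{d}\omega(y)$, both routine consequences of the doubling property. The point to be careful about is that here the reverse H\"older hypothesis is used at the closed exponent $q/p$, with no $\varepsilon$-room available, which is precisely what forces one to argue at the endpoints rather than simply quote Lemma~\ref{lem:HLS}.
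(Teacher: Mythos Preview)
Your proposal is correct and matches the paper's approach exactly: the paper proves this lemma by the single sentence ``we obtain the following lemma when $1\le p<q\le\infty$ by adapting the above proof,'' and what you have written is precisely that adaptation spelled out, including the necessary modifications at the endpoints $p=1$ (Young with $\ell^q\ast\ell^1$ in place of the general $\ell^q\ast\ell^s\ast\ell^p$) and $q=\infty$ (the pointwise argument via $RH_\infty$). One cosmetic remark: in your $q=\infty$ annulus decomposition you index from $j\ge 0$ and thus omit the central ball $\{|x-y|<\sqrt t\}$, but its contribution is handled by the same doubling bound and is clearly what you intend.
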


\subsection{Proof of Theorem \ref{thm:principal}}
    We adapt  \cite[Proposition 5.3]{auscher2007necessary}. By the Calder\'on reproducing formula, we have for all $f\in L^2_\omega(\mathbb{R}^n)$ and $\alpha>0$,  
    \begin{equation*}
                f=\lim_{\substack{\delta \to 0^+ \\ R \to \infty}}\frac{1}{\Gamma(\alpha)}\int_{\delta}^{R}(-t\Delta_\omega)^\alpha e^{t\Delta_\omega}f \, \frac{\mathrm{d}t}{t}=:\frac{1}{\Gamma(\alpha)}\int_{0}^{\infty}(-t\Delta_\omega)^\alpha e^{t\Delta_\omega}f \, \frac{\mathrm{d}t}{t},
    \end{equation*}
    with convergence in $L^2_\omega(\mathbb{R}^n)$. For all $0<\delta<R$, we set $T_{\delta,R}:=\frac{1}{\Gamma(\alpha)}\int_{\delta}^{R}t^{\alpha} e^{t\Delta_\omega} \, \frac{\mathrm{d}t}{t}$, which is defined on $L^2_\omega(\mathbb{R}^n)$, into $L^2_\omega(\mathbb{R}^n)$ with non uniform bounds.
    
    \bigskip

     \noindent\textbf{Step 1.} We first show weak type bounds:  namely,  
     $\omega^{\frac{1}{p}+\frac{\varepsilon}{q}} \, T_{\delta,R}$  extends to a bounded operator from $L^p_{\omega^{1+(p/q)\varepsilon}}(\mathbb{R}^n)$ to the Lorentz space $L^{q,\infty}_{dx}(\mathbb{R}^n)$ with bounds independent of $\delta, R$ for all $1<p<q<\infty$ with $\alpha=\frac{n}{2}(\frac{1}{p}-\frac{1}{q})$ and $\varepsilon$ in a small interval $(-\tilde{\varepsilon}_{\omega, p,q},\tilde{\varepsilon}_{\omega, p,q})$ depending on $[ \omega  ]_{RH_{\frac{q}{p}}}$, $p$, $q$ and $n$. For $\varepsilon=0$, this also holds when $p=1$.
    
    Indeed, fix $(p,q)$ as above and  select $q_0$ and $q_1$ such that
      $p<q_0<q<q_1$  so that
       $\omega \in RH_{\frac{q_0}{p}}(\mathbb{R}^n)\cap RH_{\frac{q_1}{p}}(\mathbb{R}^n)$ (see point (4) of Proposition \ref{prop:weights}). If $|\varepsilon| < \tilde{\varepsilon}_{\omega, p,q}:= q \min \big(\frac{\varepsilon_{\omega, p,q_0}}{q_0},  \frac{\varepsilon_{\omega, p,q_1}}{q_1}\big)$, one can select  $\varepsilon_0, \varepsilon_1$ with $\frac{\varepsilon_0}{q_0}=\frac{\varepsilon}{q}=\frac{\varepsilon_1}{q_1}$,  so that  Lemma \ref{lem:HLS} applies to each triplet $(p,q_0, \varepsilon_0)$ and $(p,q_1, \varepsilon_1)$.   Hence,  Minkowski inequality with the relation between $p,q$ and $\alpha$ imply that for  any $\delta <a<R    $
\begin{equation*}
      \| T_{\delta,a}f\|_{L^{q_0}_{\omega^{q_0/p+\varepsilon_0}}(\mathbb{R}^n)} \le \frac{1}{\Gamma(\alpha)}   \int_{\delta }^{a} t^{\alpha -1} \|e^{t\Delta_\omega}f\|_{L^{q_0}_{\omega^{q_0/p+\varepsilon_0}}(\mathbb{R}^n)} \, \mathrm dt  \leq C a^{\frac{n}{2}(\frac{1}{q_0}-\frac{1}{q})} 
      \|  f \|_{L^p_{\omega^{1+(p/q_0)\varepsilon_0}}(\mathbb{R}^n)},
    \end{equation*}
    and 
    \begin{equation*}
       \| T_{a,R}f\|_{L^{q_1}_{\omega^{q_1/p+{ \varepsilon_1}}}(\mathbb{R}^n)} \le \frac{1}{\Gamma(\alpha)}  \int_{a}^{R} t^{\alpha -1} \|e^{t\Delta_\omega}f\|_{L^{q_1}_{\omega^{q_1/p+{\varepsilon_1}}}(\mathbb{R}^n)} \, \mathrm dt  \leq C a^{\frac{n}{2}(\frac{1}{q_1}-\frac{1}{q})} 
       \|  f \|_{L^p_{\omega^{1+(p/q_1){\varepsilon_1}}}(\mathbb{R}^n)}.
    \end{equation*}
     We fix $f\in L^p_{\omega^{1+(p/q)\varepsilon}}(\mathbb{R}^n) \cap L^2_\omega(\mathbb{R}^n)$ such that $\|  f \|_{L^p_{\omega^{1+(p/q)\varepsilon}}(\mathbb{R}^n)}=1$. As $T_{\delta,R}=T_{\delta,a}+T_{a,R}$, then for all $\lambda>0$, Markov inequality for the Lebesgue measure implies that
    \begin{align*}
        |\{\omega^{\frac{1}{p}+\frac{\varepsilon}{q}} \, |T_{\delta,R}f | >\lambda  \} | &\leq
        |\{\omega^{\frac{1}{p}+\frac{\varepsilon_0}{q_0}} \, |T_{\delta,a}f | >\frac{\lambda }{2} \} |+ |\{\omega^{\frac{1}{p}+\frac{\varepsilon_1}{q_1}} \, |T_{a,R}f | >\frac{\lambda }{2} \} |
        \\&\leq C \lambda^{-q_0}a^{\frac{n q_0}{2}(\frac{1}{q_0}-\frac{1}{q})}+ C \lambda^{-q_1}a^{\frac{n  q_1}{2}(\frac{1}{q_1}-\frac{1}{q})}.
    \end{align*}
    Taking $a=\lambda^{-\frac{2q}{n}}$, we obtain the unweighted weak type bound 
    \begin{equation}\label{eq:weak}
        |\{\omega^{\frac{1}{p}+\frac{\varepsilon}{q}} \, |T_{\delta,R}f| >\lambda  \} | \leq 2C \, \lambda^{-q}.
    \end{equation}
    Remark that if $\lambda^{-\frac{2q}{n}}\le\delta$ or $\lambda^{-\frac{2q}{n}}\ge R$, then we obtain the same estimate by using one of the two terms. 
    
    According to Lemma \ref{lem:extrémités}, the weak-type estimate \eqref{eq:weak} remains valid in the endpoint case $p = 1$ with $\varepsilon = 0$, that is 
    \begin{equation}\label{eq:weak11}
        |\{\omega \, |T_{\delta,R}f| >\lambda  \} | \leq 2C \, \lambda^{-q}.
    \end{equation}
    
    \bigskip
    
    \noindent\textbf{Step 2.} The second step, relevant only for $p > 1$, consists in interpolating the above weak type inequalities obtained for different triplets $(p_i, q_i, \varepsilon_i)$ as follows. We fix again $(p,q)$ with $\alpha=\frac{n}{2}(\frac{1}{p}-\frac{1}{q})$. We may choose $1<p_0<p<p_1$, $q_0<q<q_1<\infty$ and $|\varepsilon_0|<\tilde{\varepsilon}_{\omega, p_0,q_0}, |\varepsilon_1|<\tilde{\varepsilon}_{\omega, p_1,q_1}$  such that $$\omega \in RH_{\frac{{q}_1}{p_0}}(\mathbb{R}^n) \subset RH_{\frac{{q}_1}{p_1}}(\mathbb{R}^n) \cap RH_{\frac{{q}_0}{p_0}}(\mathbb{R}^n),$$ 
    $$ \frac{1}{p_0}-\frac{1}{q_0}=\frac{1}{p}-\frac{1}{q}=\frac{1}{p_1}-\frac{1}{q_1},$$
    and 
    $$ \frac{1}{p_0}+\frac{\varepsilon_0}{q_0}=\frac{1}{p} = \frac{1}{p_1}+\frac{\varepsilon_1}{{q}_1} .$$
    Note that $\varepsilon_0$ and $\varepsilon_1$ must have different signs. 
    By the first step, we have
    \begin{equation}\label{eq:weak1}
       \omega^{\frac 1 p}\, T_{\delta,R}= \omega^{\frac{1}{p_0}+\frac{\varepsilon_0}{{q}_0}} \, T_{\delta,R}:  
       L^{p_0}_{\omega^{1+(p_0/{q}_0)\varepsilon_0 }}(\mathbb{R}^n)=L^{p_0}_{\omega^{p_0/p }}(\mathbb{R}^n)  \longrightarrow L^{q_0,\infty}_{\mathrm{d}x}(\mathbb{R}^n),
    \end{equation}
    \begin{equation}\label{eq:weak2}
       \omega^{\frac 1 p}\, T_{\delta,R}=\omega^{\frac{1}{p_1}+\frac{\varepsilon_1}{{q}_1}} \, T_{\delta,R}:   
       L^{p_1}_{\omega^{1+(p_1/{q}_1)\varepsilon_1 }}(\mathbb{R}^n)=L^{p_1}_{\omega^{p_1/p }}(\mathbb{R}^n) \longrightarrow L^{q_1,\infty}_{\mathrm{d}x}(\mathbb{R}^n).
    \end{equation}
    Since ${\frac{1}{p}=\frac{1-\theta}{p_0}+\frac{\theta}{p_1}}$ and ${\frac{1}{q}=\frac{1-\theta}{q_0}+\frac{\theta}{q_1}} $ for some $\theta \in (0,1)$, 
    by Stein-Weiss real interpolation theorem with change of measures \cite[Theorem 2.9]{stein1958interpolation}, 
    $$\omega^{\frac 1 p}\, T_{\delta,R}:   L^{p}_{\tilde\omega}(\mathbb{R}^n) \longrightarrow L^{q}_{\mathrm{d}x}(\mathbb{R}^n)$$
    with 
    $$\Tilde{\omega}= \left ( \omega^{\frac{p_0}p} \right )^{(1-\theta)\frac{p}{p_0}} \times \left ( \omega^{\frac{p_1}p} \right )^{\theta \frac{p}{p_1}}=\omega. $$
     Thus, we have proved that
    \begin{equation}\label{eq:TepsR}
         \| T_{\delta,R}f\|_{L^{q}_{\omega^{q/p}}(\mathbb{R}^n)} = \| \omega^{\frac 1 p}T_{\delta,R}f\|_{L^{q}(\mathbb{R}^n)}\leq C  \|f \|_{L^p_\omega(\mathbb{R}^n)}, \quad \text{for all} \ f \in  L^2_\omega(\mathbb{R}^n)\cap L^p_\omega(\mathbb{R}^n),
    \end{equation}
    with $C$ independent of $f$, $\delta$ and $R$.
    
    \bigskip
    
    \noindent\textbf{Step 3.} {The third step consists in passing to the limit to obtain \eqref{eq:fractionalHLS} and \eqref{eq:fractionalHLSweak}. We begin with the case $p>1$.   The inequality \eqref{eq:TepsR} remains valid for all $f \in L^2_\omega(\mathbb{R}^n)$ (as the inequality is obvious if $\|f \|_{L^p_\omega(\mathbb{R}^n)}=\infty$). Hence, substituting $f$ by $(-\Delta_\omega)^\alpha f\in L^2_\omega(\mathbb{R}^n)$ for   $f\in D((-\Delta_\omega)^\alpha)$, we have 
    $$\left\| T_{\delta,R}(-\Delta_\omega)^\alpha f\right\|_{L^{q}_{\omega^{q/p}}(\mathbb{R}^n)} \leq C  \|(-\Delta_\omega)^\alpha f \|_{L^p_\omega(\mathbb{R}^n)}. $$
    As $ T_{\delta,R}(-\Delta_\omega)^\alpha f$ tends to $f$ in $L^2_\omega(\mathbb{R}^n)$ when $\delta \to 0^+$ and $R \to \infty$ hence almost everywhere in $\mathbb{R}^n$ up to extracting a sub-sequence, then by Fatou's lemma,
    \begin{equation}\label{eq:CRF}
        \left\|  f\right\|_{L^{q}_{\omega^{q/p}}(\mathbb{R}^n)} \leq \liminf_{\substack{\delta \to 0^+ \\ R \to \infty}} \, \left\| T_{\delta,R}(-\Delta_\omega)^\alpha f\right\|_{L^{q}_{\omega^{q/p}}(\mathbb{R}^n)} \leq C \|(-\Delta_\omega)^\alpha f \|_{L^p_\omega(\mathbb{R}^n)}.
    \end{equation}
    {When $p=1$, using  the weak-type estimate \eqref{eq:weak11} and convergence in measure, we obtain 
    \begin{equation*}
        \| \omega  f \|_{L^{q,\infty}_{\mathrm{d}x}(\mathbb{R}^n)} \leq C \| (-\Delta_\omega)^\alpha f \|_{L^1_{\omega}(\mathbb{R}^n)}, \quad \forall f \in D((-\Delta_\omega)^\alpha).
    \end{equation*}}
    
    \bigskip
    
    \noindent\textbf{Step 4.} To obtain the inequalities for $(-\Delta_\omega)^{-\alpha}$, one can observe that  $T_{\delta,R}f \to (-\Delta_\omega)^{-\alpha} f  $ in $L^2_\omega(\mathbb{R}^n)$ when $f\in D((-\Delta_\omega)^{-\alpha})$. Hence by the same  arguments starting either from \eqref{eq:TepsR} or \eqref{eq:weak11}, we obtain the desired inequalities on  $D((-\Delta_\omega)^{-\alpha}) \cap L^p_\omega(\mathbb{R}^n)$. It remains to prove that, for all $1\le p <\infty$,  $D((-\Delta_\omega)^{-\alpha}) \cap L^p_\omega(\mathbb{R}^n)$ is dense in $L^p_\omega(\mathbb{R}^n)$. Since the domains form an increasing sequence of spaces in $-\alpha$, it suffices to prove it when  $\alpha$ is an integer, say $N\in \mathbb{N}$, $N\ge 1$. We define $\Phi(z) = c z^N e^{-z}$, where 
    $c > 0$ is chosen so that
$$
\int_0^{\infty}  \Phi(t) \, \frac{\mathrm{d}t}{t} = 1.
$$
For every $\ell\in  \mathbb{N}^*$ and $z \in \mathbb{C}$, we define
$$
\varphi_\ell(z) := \int_{\frac{1}{\ell}}^\ell  \Phi(tz) \, \frac{\mathrm{d}t}{t}.
$$
Fix $f \in L^p_\omega(\mathbb{R}^n) \cap L^2_\omega(\mathbb{R}^n)$. Clearly it follows that for all $\ell\in  \mathbb{N}$, $\ell\ge 1$,
$$
\varphi_\ell(-\Delta_\omega) f \in D((-\Delta_\omega)^{-N}) \cap L^p_\omega(\mathbb{R}^n).
$$
Using integration by parts, one can verify by induction on $N$ that 
$$\varphi_\ell(z)=-P_N(-\ell z)e^{-\ell z}+ P_N(-\ell^{-1}z)e^{-\ell^{-1} z}$$ where $P_N$  is a polynomial of degree $N-1$ with $P_N(0)=1$. Therefore,   $\varphi_\ell(-\Delta_\omega)=-P_N (\ell \Delta_\omega) e^{\ell \Delta_\omega}+P_N (\ell^{-1} \Delta_\omega) e^{{\ell}^{-1} \Delta_\omega}$.
By Proposition \ref{prop:boundednessandconvergence}, we have $\varphi_\ell(-\Delta_\omega) f \to f$  in $L^p_\omega(\mathbb{R}^n)$ as $\ell \to \infty$. Thus, $D((-\Delta_\omega)^{-\alpha}) \cap L^p_\omega(\mathbb{R}^n)$ is dense in $L^2_\omega(\mathbb{R}^n) \cap L^p_\omega(\mathbb{R}^n)$, and hence dense in $L^p_\omega(\mathbb{R}^n)$. \hspace{2.5cm} $\square$

\begin{rem}
    Having the freedom of choosing small $\varepsilon$ from Lemma \ref{lem:HLS} is what allows us to have the same operator in \eqref{eq:weak1} and \eqref{eq:weak2}. Otherwise, we could not interpolate using  the Stein-Weiss theorem.
\end{rem}

\subsection{Extension to weighted Lorentz spaces}\label{sub:Lorentz}
Let $\mu$ be a measure on $\mathbb{R}^n$. The Lorentz space $L^{p,r}_{\mu}(\mathbb{R}^n)$ is defined as the set of all measurable functions $f$ on $\mathbb{R}^n$ such that $\left\| f\right\|_{L^{p,r}_{\mu}(\mathbb{R}^n)} < \infty$, with
\begin{align*}
\left\| f\right\|_{L^{p,r}_{\mu}(\mathbb{R}^n)} =\left\{\begin{matrix}
\left (r \int_{0}^{\infty} s^{r-1} \lambda_f(s)^{r/p} \, \mathrm{d}s \right )^{1/r}, \quad \ 1\leq p,r <\infty , \\ 
 \sup_{s>0} s \, \lambda_f(s)^{1/p}, \quad \ 1\le p < \infty, \ r=\infty,
 \\ 
 \end{matrix}\right. 
 \end{align*}
 where 
$$ \lambda_f(s)=\mu\left ( \left\{ x\in \mathbb{R}^n : |f(x)|>s\right\} \right )
. 
$$
It is known that $L^{p,p}_{\mu}(\mathbb{R}^n)=L^{p}_{\mu}(\mathbb{R}^n)$ with $\left\| f\right\|_{L^{p,p}_{\mu}(\mathbb{R}^n)}=\left\| f\right\|_{L^{p}_{\mu}(\mathbb{R}^n)}$ and $\left\| f\right\|_{L^{p,r}_{\mu}(\mathbb{R}^n)}$ is non-increasing as a function of $r$, so that $L^{p,r_2}_{\mu}(\mathbb{R}^n)
\subset L^{p,r_1}_{\mu}(\mathbb{R}^n)$ if $r_1 \leq r_2$. We refer to \cite[Ch. V, \S3 ]{stein1971introduction} for more details. If $\Tilde{\omega}$ is a weight on $\mathbb{R}^n$, then $L^{p,r}_{\mu}(\Tilde{\omega}, \mathbb{R}^n)$ is the space of all measurable functions $f$ on $\mathbb{R}^n$ such that $\Tilde{\omega}f$ belongs to the Lorentz space $L^{p,r}_{\mu}(\mathbb{R}^n)$ and we set 
\begin{equation*}
  \left\| f \right\|_{L^{p,r}_{\mu}(\Tilde{\omega}, \mathbb{R}^n)} :=  \left\| \Tilde{\omega}f \right\|_{L^{p,r}_{\mu}(\mathbb{R}^n)}.
\end{equation*}

Theorem \ref{thm:principal}  can be extended to obtain boundedness into Lorentz spaces. There are two possible results, and it is not clear that they are connected.   

\begin{thm}\label{thm:principal*}
   Let $1<p<q<\infty$. Assume that $\omega \in RH_{\frac{q}{p}}(\mathbb{R}^n)$ and set $\alpha=\frac{n}{2}(\frac{1}{p}-\frac{1}{q})$. Then, there exists a constant $C = C([ \omega  ]_{A_2},[ \omega  ]_{RH_{\frac{q}{p}}},n,p,q)<\infty$ such that
    \begin{equation*}
        \| f \|_{L^{q,p}_{\omega}(\omega^{\frac{1}{p}-\frac{1}{q}}, \mathbb{R}^n)} \leq C \| (-\Delta_\omega)^\alpha f \|_{L^p_{\omega}(\mathbb{R}^n)}, \quad \forall f \in D((-\Delta_\omega)^\alpha).
    \end{equation*}
    In particular, $(-\Delta_\omega)^{-\alpha}$ defines a bounded operator from $L^p_\omega(\mathbb{R}^n)$ to $L^{q,p}_{\omega}(\omega^{\frac{1}{p}-\frac{1}{q}}, \mathbb{R}^n)$.
\end{thm}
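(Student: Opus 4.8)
The plan is to mimic the proof of Theorem \ref{thm:principal}, replacing the strong $L^q$ endpoint estimate coming from Stein--Weiss interpolation by a Lorentz-space endpoint coming from real interpolation of the weak-type bounds already established in Step~1 of that proof. Recall that in Step~1 we showed, for a fixed pair $(p,q)$ with $\alpha = \frac{n}{2}(\frac1p - \frac1q)$ and all sufficiently small $\varepsilon$, that $\omega^{\frac1p + \frac{\varepsilon}{q}}\,T_{\delta,R}$ maps $L^p_{\omega^{1+(p/q)\varepsilon}}(\mathbb{R}^n)$ into $L^{q,\infty}_{\mathrm{d}x}(\mathbb{R}^n)$ with uniform bounds in $\delta, R$. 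Equivalently, writing the target measure as $\mathrm{d}x = \omega^{-1}\,\mathrm{d}\omega$ and absorbing a power of $\omega$, one has that $\omega^{\frac1p - \frac1q}\,T_{\delta,R}$ maps $L^p_\omega(\mathbb{R}^n)$ boundedly into the weighted Lorentz space $L^{q,\infty}_{\omega}(\omega^{\frac1q}\cdot\omega^{-\frac1q},\mathbb{R}^n)$; more carefully, the point is that the weak-type bound of Step~1 can be re-expressed as a weak-type $(p,q)$ bound \emph{for the measure $\mathrm{d}\omega$ on the target side}, after the weight $\omega^{\frac1p-\frac1q}$ is pulled out of the operator.

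Concretely, first I would re-examine the identity used in Step~2 of the proof of Theorem~\ref{thm:principal}: the estimates \eqref{eq:weak1}--\eqref{eq:weak2} say that $\omega^{\frac1p}T_{\delta,R}\colon L^{p_i}_{\omega^{p_i/p}}(\mathbb{R}^n)\to L^{q_i,\infty}_{\mathrm{d}x}(\mathbb{R}^n)$ for $i=0,1$. Rewrite the source space as $L^{p_i}_{\omega^{p_i/p}}(\mathbb{R}^n) = L^{p_i}_{\mathrm{d}\omega}(\omega^{\frac1p-\frac1{p_i}},\mathbb{R}^n)$ and rewrite $\omega^{\frac1p}T_{\delta,R} = \omega^{\frac1{q_i}}\cdot\big(\omega^{\frac1p-\frac1{q_i}}T_{\delta,R}\big)$, so that $\omega^{\frac1p-\frac1{q_i}}T_{\delta,R}$ maps into $L^{q_i,\infty}_{\mathrm{d}x}(\omega^{\frac1{q_i}},\mathbb{R}^n) = L^{q_i,\infty}_{\mathrm{d}\omega}(\mathbb{R}^n)$. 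But $\frac1p - \frac1{q_i} = \frac1{p_i} - \frac1{q_i}\cdot\frac{p_i}{p}\cdot\ldots$; one checks, using $\frac1{p_i}-\frac1{q_i} = \frac1p-\frac1q$ and $\frac1{p_i}+\frac{\varepsilon_i}{q_i} = \frac1p$, that in fact $\omega^{\frac1p-\frac1q}\,T_{\delta,R}$ (a \emph{single} operator, independent of $i$) maps $L^{p_i}_{\omega}(\mathbb{R}^n)\to L^{q_i,\infty}_{\omega}(\mathbb{R}^n)$ for $i=0,1$ with uniform bounds. This is a weak-type inequality between genuine Lebesgue and Lorentz spaces \emph{for the same measure $\mathrm{d}\omega$ on both sides}, to which Marcinkiewicz/real interpolation (in the form of the off-diagonal interpolation theorem, e.g.\ \cite[Ch.~V, \S3]{stein1971introduction} or the version giving Lorentz targets, as in O'Neil/Tartar) applies: interpolating the two weak-type $(p_i,q_i)$ bounds with $\frac1p = \frac{1-\theta}{p_0}+\frac{\theta}{p_1}$, $\frac1q = \frac{1-\theta}{q_0}+\frac{\theta}{q_1}$ yields $\omega^{\frac1p-\frac1q}\,T_{\delta,R}\colon L^p_\omega(\mathbb{R}^n)\to L^{q,p}_\omega(\mathbb{R}^n)$ with bound uniform in $\delta,R$, which is exactly $T_{\delta,R}\colon L^p_\omega(\mathbb{R}^n)\to L^{q,p}_\omega(\omega^{\frac1p-\frac1q},\mathbb{R}^n)$ in the notation of the theorem.

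Having the uniform bound $\|T_{\delta,R}f\|_{L^{q,p}_\omega(\omega^{1/p-1/q},\mathbb{R}^n)}\le C\|f\|_{L^p_\omega(\mathbb{R}^n)}$, the passage to the limit is handled exactly as in Step~3--Step~4 of the proof of Theorem~\ref{thm:principal}: for $f\in D((-\Delta_\omega)^\alpha)$ apply the inequality to $(-\Delta_\omega)^\alpha f$ in place of $f$, use that $T_{\delta,R}(-\Delta_\omega)^\alpha f\to f$ in $L^2_\omega(\mathbb{R}^n)$ hence (along a subsequence) $\mathrm{d}\omega$-a.e., and invoke Fatou's lemma for the Lorentz ``norm'' — more precisely, the functional $f\mapsto \|f\|_{L^{q,p}_\omega}$ (with $p<\infty$, so it is an integral of the distribution function) is lower semicontinuous under a.e.\ convergence, by Fatou applied to the distribution function and then to the outer $s$-integral. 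This gives $\|f\|_{L^{q,p}_\omega(\omega^{1/p-1/q},\mathbb{R}^n)}\le C\|(-\Delta_\omega)^\alpha f\|_{L^p_\omega(\mathbb{R}^n)}$. The boundedness of $(-\Delta_\omega)^{-\alpha}$ then follows from the density of $D((-\Delta_\omega)^{-\alpha})\cap L^p_\omega(\mathbb{R}^n)$ in $L^p_\omega(\mathbb{R}^n)$, which was already established in Step~4 of the proof of Theorem~\ref{thm:principal}.

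The main obstacle I anticipate is purely bookkeeping: verifying that the weight exponents arrange so that $\omega^{\frac1p-\frac1q}\,T_{\delta,R}$ is literally one operator mapping $L^{p_i}_\omega\to L^{q_i,\infty}_\omega$ for \emph{both} endpoints simultaneously — this is the analogue of the Remark following the proof of Theorem~\ref{thm:principal} about needing the free parameter $\varepsilon$ in Lemma~\ref{lem:HLS} — and then quoting the correct version of real interpolation that produces a Lorentz space $L^{q,p}$ (rather than $L^{q,q}$) as the interpolant of two weak-type $(p_i,q_i)$ estimates with the lower secondary index coming from the \emph{source} exponent $p$; this is the O'Neil--Tartar phenomenon alluded to in the introduction, and is exactly \cite[Theorem]{MR1662313}-type content. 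No new analytic input beyond Lemma~\ref{lem:HLS} and the already-proven Step~1 weak-type bounds is needed.
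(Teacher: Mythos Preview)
Your approach has a genuine gap. The key step where you write $L^{q_i,\infty}_{\mathrm{d}x}(\omega^{1/q_i},\mathbb{R}^n) = L^{q_i,\infty}_{\mathrm{d}\omega}(\mathbb{R}^n)$ is false: incorporating a weight into the measure versus using it as a multiplicative factor yields the same space only for genuine Lebesgue spaces (secondary index equal to primary index), not for general Lorentz spaces. This is precisely the phenomenon discussed in Remark~\ref{lem:Lorentzs} of the paper, and is the reason Theorems~\ref{thm:principal*} and~\ref{cor:principal**} are stated as two separate, a priori unrelated, results. Without this identification you cannot reduce to a single operator $\omega^{1/p-1/q}T_{\delta,R}$ acting between $L^{p_i}_\omega$ and $L^{q_i,\infty}_\omega$ with the \emph{same} measure on both sides; the exponents do not arrange themselves that cleanly, and the subsequent ``one checks'' in your proposal does not check out (the source-side rewriting also forces a multiplication \emph{inside} $T_{\delta,R}$, so the operator would depend on $i$).

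The paper proceeds differently. It resumes from the \emph{strong}-type bound \eqref{eq:TepsR}, which gives $T_{\delta,R}: L^{p_i}_\omega(\mathbb{R}^n)\to L^{q_i}_{\omega^{q_i/p_i}}(\mathbb{R}^n)$ for nearby pairs $(p_i,q_i)$ with $\frac{1}{p_i}-\frac{1}{q_i}=\frac{1}{p}-\frac{1}{q}$ and $\omega\in RH_{q_1/p_0}(\mathbb{R}^n)$. The source spaces share the measure $\mathrm{d}\omega$, so Hunt's theorem identifies their real interpolant as $L^{p,r}_\omega$. The target spaces, however, carry \emph{different} measures $\omega^{q_i/p_i}\,\mathrm{d}x$, and for this the paper invokes the Lizorkin--Freitag formula \cite[Theorem~2]{Freitag1978}, \cite{Lizorkin1975}, which gives $\big(L^{q_0}_{\omega^{q_0/p_0}}, L^{q_1}_{\omega^{q_1/p_1}}\big)_{\theta,r}=L^{q,r}_\omega(\omega^{1/p-1/q},\mathbb{R}^n)$. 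Taking $r=p$ yields the claim. If instead you keep the weak-type target spaces $L^{q_i,\infty}_{\mathrm{d}x}$ unchanged (as in \eqref{eq:weak1}--\eqref{eq:weak2}) and interpolate correctly using Peetre's change-of-measure theorem on the source side, you land in the unweighted Lorentz space $L^{q,p}_{\mathrm{d}x}$ --- that is Theorem~\ref{cor:principal**}, not Theorem~\ref{thm:principal*}.
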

\begin{proof}
    We resume the proof of Theorem \ref{thm:principal} from equation \eqref{eq:TepsR}, where we have proved that $T_{\delta,R}$ extends to a bounded operator from $L^p_\omega(\mathbb{R}^n)$ to $ L^q_{\omega^{q/p}}(\mathbb{R}^n)$. Now, to interpolate, we fix exponents $1<p_0<p<p_1$ and $q_0<q<q_1$ such that $\omega \in RH_{\frac{q_1}{p_0}}(\mathbb{R}^n)$ and 
    $$ \frac{1}{p_0}-\frac{1}{q_0}=\frac{1}{p}-\frac{1}{q}=\frac{1}{p_1}-\frac{1}{q_1}.$$
    One can thus choose $\theta \in (0,1)$  such that $\frac{1}{p}=\frac{1-\theta}{p_0}+\frac{\theta}{p_1}$ and $\frac{1}{q}=\frac{1-\theta}{q_0}+\frac{\theta}{q_1}$.
    Using the real interpolation method, for all $1\le r \le \infty$, we have 
    \begin{equation*}
        \left ( L^{p_0}_{\omega}(\mathbb{R}^n), L^{p_1}_{\omega}(\mathbb{R}^n) \right )_{\theta,r}=L^{p,r}_\omega(\mathbb{R}^n) \quad \text{and} \quad  ( L^{q_0}_{\omega^{q_0/p_0}}(\mathbb{R}^n), L^{q_1}_{\omega^{q_1/p_1}}(\mathbb{R}^n) )_{\theta,r}=L^{q,r}_\omega(\omega^{\frac{1}{p}-\frac{1}{q}},\mathbb{R}^n),
    \end{equation*}
     The first interpolation result, without a change of measure, is well known Hunt interpolation theorem and follows as a special case of \cite[Theorem 5.3.1]{bergh2012interpolation}. The second, involving a change of measure, is known as the Lizorkin–Freitag formula and is due to Lizorkin and Freitag (see \cite[Theorem 2]{Freitag1978} and \cite{Lizorkin1975}). Thus, for all $1\le r \le \infty$, $T_{\delta,R}: L^{p,r}_\omega(\mathbb{R}^n) \rightarrow L^{q,r}_\omega(\omega^{\frac{1}{p}-\frac{1}{q}},\mathbb{R}^n)$ is bounded. We take $r=p$, and conclude the argument by following the same final steps as in the proof of Theorem \ref{thm:principal}.
\end{proof}
\begin{rem}
    Theorem \ref{thm:principal*} improves Theorem \ref{thm:principal}, since $p<q$, so for every $f \in L^{q,p}_{\omega}(\omega^{\frac{1}{p}-\frac{1}{q}}, \mathbb{R}^n)$:
    \begin{equation*}
        \| f \|_{L^q_{{\omega}^{q/p}}(\mathbb{R}^n)}=\| f \|_{L^{q,q}_{\omega}(\omega^{\frac{1}{p}-\frac{1}{q}}, \mathbb{R}^n)} \le  \| f \|_{L^{q,p}_{\omega}(\omega^{\frac{1}{p}-\frac{1}{q}}, \mathbb{R}^n)}.
    \end{equation*}
\end{rem}

The second result, still improving Theorem \ref{thm:principal},  is the following.     

\begin{thm}
\label{cor:principal**}
   Let $1< p<q<\infty$. Assume that $\omega \in RH_{\frac{q}{p}}(\mathbb{R}^n)$ and set $\alpha=\frac{n}{2}(\frac{1}{p}-\frac{1}{q})$. Then, there exists a constant $C = C([ \omega  ]_{A_2},[ \omega  ]_{RH_{\frac{q}{p}}},n,p,q)<\infty$ such that
    \begin{equation*}
        \| \omega^{\frac{1}{p}}  f \|_{L^{q,p}_{\mathrm{d}x}(\mathbb{R}^n)} \leq C \| (-\Delta_\omega)^\alpha f \|_{L^p_{\omega}(\mathbb{R}^n)}, \quad \forall f \in D((-\Delta_\omega)^\alpha).
    \end{equation*}
    In particular, $\omega^{\frac{1}{p}} \, (-\Delta_\omega)^{-\alpha}$ defines a bounded operator from $L^p_\omega(\mathbb{R}^n)$ to $L^{q,p}_{\mathrm{d}x}(\mathbb{R}^n)$.
\end{thm}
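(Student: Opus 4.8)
The plan is to resume the proof of Theorem~\ref{thm:principal} at the point where the two weak-type endpoint estimates \eqref{eq:weak1} and \eqref{eq:weak2} have been established, and to interpolate them \emph{differently}: instead of feeding them to the Stein--Weiss theorem (which produces the ``diagonal'' target $L^q_{\mathrm{d}x}$), I would apply the real interpolation functor $(\,\cdot\,,\,\cdot\,)_{\theta,p}$. Concretely, fix $(p,q)$ with $\alpha=\frac n2(\frac1p-\frac1q)$ and choose, exactly as in Step~2 of the proof of Theorem~\ref{thm:principal}, exponents $1<p_0<p<p_1$, $q_0<q<q_1<\infty$ with $\frac1{p_0}-\frac1{q_0}=\frac1p-\frac1q=\frac1{p_1}-\frac1{q_1}$ and $|\varepsilon_i|<\tilde\varepsilon_{\omega,p_i,q_i}$ with $\frac1{p_i}+\frac{\varepsilon_i}{q_i}=\frac1p$, so that \eqref{eq:weak1} and \eqref{eq:weak2} hold, uniformly in $\delta,R$.

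The key observation is that the source spaces in \eqref{eq:weak1}--\eqref{eq:weak2} are $L^{p_i}_{\omega^{p_i/p}}(\mathbb{R}^n)=\{f:\omega^{1/p}f\in L^{p_i}_{\mathrm{d}x}(\mathbb{R}^n)\}$, i.e.\ weighted Lebesgue spaces \emph{relative to Lebesgue measure with the same weight} $\omega^{1/p}$ — this is precisely what the identities $\frac1{p_i}+\frac{\varepsilon_i}{q_i}=\frac1p$ buy us, and it is the structural feature that makes the present argument go through without any change-of-measure machinery. Since multiplication by $\omega^{-1/p}$ is an isometric isomorphism of the couple $(L^{p_0}_{\mathrm{d}x}(\mathbb{R}^n),L^{p_1}_{\mathrm{d}x}(\mathbb{R}^n))$ onto $(L^{p_0}_{\omega^{p_0/p}}(\mathbb{R}^n),L^{p_1}_{\omega^{p_1/p}}(\mathbb{R}^n))$, we get, with $\theta\in(0,1)$ as in Theorem~\ref{thm:principal}, the real interpolation identity $(L^{p_0}_{\omega^{p_0/p}}(\mathbb{R}^n),L^{p_1}_{\omega^{p_1/p}}(\mathbb{R}^n))_{\theta,p}=L^{p,p}_{\mathrm{d}x}(\omega^{1/p},\mathbb{R}^n)=L^p_\omega(\mathbb{R}^n)$; on the target side, Hunt's theorem (\cite[Theorem~5.3.1]{bergh2012interpolation}, valid since $q_0\neq q_1$) gives $(L^{q_0,\infty}_{\mathrm{d}x}(\mathbb{R}^n),L^{q_1,\infty}_{\mathrm{d}x}(\mathbb{R}^n))_{\theta,p}=L^{q,p}_{\mathrm{d}x}(\mathbb{R}^n)$. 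By the interpolation property of the real method, $\omega^{1/p}T_{\delta,R}$ maps $L^p_\omega(\mathbb{R}^n)$ into $L^{q,p}_{\mathrm{d}x}(\mathbb{R}^n)$ boundedly, uniformly in $\delta,R$, that is $\|\omega^{1/p}T_{\delta,R}f\|_{L^{q,p}_{\mathrm{d}x}(\mathbb{R}^n)}\le C\|f\|_{L^p_\omega(\mathbb{R}^n)}$ for all $f\in L^2_\omega(\mathbb{R}^n)\cap L^p_\omega(\mathbb{R}^n)$.

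To finish, I would argue exactly as in Steps~3--4 of the proof of Theorem~\ref{thm:principal}: for $f\in D((-\Delta_\omega)^\alpha)$ apply the last bound to $(-\Delta_\omega)^\alpha f$ in place of $f$, use that $T_{\delta,R}(-\Delta_\omega)^\alpha f\to f$ in $L^2_\omega(\mathbb{R}^n)$ (hence, along a subsequence, almost everywhere) as $\delta\to0^+$, $R\to\infty$, and pass to the limit by the lower semicontinuity of the $L^{q,p}_{\mathrm{d}x}$ quasi-norm under a.e.\ convergence (equivalently, Fatou applied to the distribution function of $\omega^{1/p}T_{\delta,R}(-\Delta_\omega)^\alpha f$). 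The assertion for $\omega^{1/p}(-\Delta_\omega)^{-\alpha}$ then follows from the density of $D((-\Delta_\omega)^{-\alpha})\cap L^p_\omega(\mathbb{R}^n)$ in $L^p_\omega(\mathbb{R}^n)$ already proved in Step~4 of Theorem~\ref{thm:principal}.

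I do not expect a genuine obstacle here; the proof is essentially a re-routing of the interpolation step. The two points that require (mild) care are: \emph{(i)} the realization of all source spaces with the common weight $\omega^{1/p}$, which is what allows plain (unweighted) Hunt interpolation to replace the Stein--Weiss or Lizorkin--Freitag change-of-measure results used in Theorems~\ref{thm:principal} and \ref{thm:principal*}; and \emph{(ii)} the routine justification of the limiting argument for the Lorentz quasi-norm $L^{q,p}_{\mathrm{d}x}$, together with the observation that, since $p<q$, this quasi-norm is equivalent to a norm, so no quasi-Banach subtleties intervene in the final passage to the limit.
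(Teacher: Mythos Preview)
Your proposal is correct and follows essentially the same route as the paper: resume from \eqref{eq:weak1}--\eqref{eq:weak2}, apply the real interpolation functor $(\cdot,\cdot)_{\theta,p}$ with Hunt's theorem on the target side, and finish via Steps~3--4. The only cosmetic difference is on the source side: the paper invokes Peetre's change-of-measure theorem \cite[Theorem~5.5.1]{bergh2012interpolation} to identify $(L^{p_0}_{\omega^{p_0/p}},L^{p_1}_{\omega^{p_1/p}})_{\theta,p}=L^p_\omega$, whereas you obtain the same identity by the elementary observation that multiplication by $\omega^{-1/p}$ is an isometry of Banach couples---this is precisely the mechanism behind Peetre's theorem here, so the two justifications coincide.
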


\begin{proof} 
The proof can also be seen by resuming the one of Theorem \ref{thm:principal} from equations \eqref{eq:weak1} and \eqref{eq:weak2} with the same notation, where it was shown that the operator $\omega^{\frac{1}{p}} \, T_{\delta,R}$ extends boundedly from $L^{p_0}_{\omega^{p_0/p}}(\mathbb{R}^n)$ to $L^{q_0,\infty}_{\mathrm{d}x}(\mathbb{R}^n)$ and from $L^{p_1}_{\omega^{p_1/p}}(\mathbb{R}^n)$ to $L^{q_1,\infty}_{\mathrm{d}x}(\mathbb{R}^n)$. With the definition of these exponents, one can choose $\theta \in (0,1)$  such that $\frac{1}{p} = \frac{1-\theta}{p_0} + \frac{\theta}{p_1}$ and $\frac{1}{q} = \frac{1-\theta}{q_0} + \frac{\theta}{q_1}$.
    By applying the real interpolation method, we know that
    \begin{equation*}
    \left( L^{p_0}_{\omega^{p_0/p}}(\mathbb{R}^n), L^{p_1}_{\omega^{p_1/p}}(\mathbb{R}^n) \right)_{\theta, p} = L^{p}_\omega(\mathbb{R}^n) \quad \text{and} \quad
    \left( L^{q_0,\infty}_{\mathrm{d}x}(\mathbb{R}^n), L^{q_1,\infty}_{\mathrm{d}x}(\mathbb{R}^n) \right)_{\theta, p} = L^{q,p}_{\mathrm{d}x}(\mathbb{R}^n).
    \end{equation*} The first interpolation identity, which involves a change of measure, is due to Peetre; see \cite[Theorem 5.5.1]{bergh2012interpolation}. The second identity is, again, Hunt's interpolation theorem; see \cite[Theorem 5.3.1]{bergh2012interpolation}. Hence, the operator $\omega^{\frac{1}{p}} \, T_{\delta,R}$ is bounded from $L^{p}_\omega(\mathbb{R}^n)$ to $L^{q,p}_{\mathrm{d}x}(\mathbb{R}^n)$. The conclusion  is then obtained by following the same final steps as in the proof of Theorem \ref{thm:principal}.
\end{proof}

\begin{rem}\label{lem:Lorentzs} The second result would be  a corollary of the first one if one could prove that 
     the multiplication operator
    \begin{align*}
        M_{\omega^{1/p}} : L^{q,p}_{\omega}(\omega^{\frac{1}{p} - \frac{1}{q}}, \mathbb{R}^n) 
        \longrightarrow L^{q,p}_{{\mathrm{d}x}}(\mathbb{R}^n), \quad 
        f \longmapsto \omega^{1/p} f,
    \end{align*}
    defines a bounded operator when  $1 < p< q<\infty$ and $\omega \in RH_{\frac{q}{p}}(\mathbb{R}^n)$. It is  trivially true when $p=q$. One can show the desired inequality when $f$ is an indicator function of a measurable set $A \subset \mathbb{R}^n$, but we have been unable to prove it for general functions or to find a counter-example to the statement. In \cite{MR4440073}, it is explicitly said in the introduction that while considering $\omega$ as part of the measure or as a multiplicative function yield the same Lebesgue space,  it does not necessarily lead to the same Lorentz space. They took the example of $p=\infty$.  Here we take $1<p<q$ and we are ready to assume the reverse H\"older condition on $\omega$, but it does not seem to suffice for a conclusion. Hence, we obtain these two different results and neither one is more natural than the other.
\end{rem}

\begin{rem}
    As in the unweighted case with Hunt's interpolation theorem, one might expect that starting from the weak-type estimates \eqref{eq:weak1} and \eqref{eq:weak2}, one could deduce that
    \begin{equation*}
        \omega^{\frac{1}{p}} \, T_{\delta,R} : L^{p,r}_\omega(\mathbb{R}^n) \longrightarrow L^{q,r}_{\mathrm{d}x}(\mathbb{R}^n), \quad \text{for all} \ 1\le r \le \infty.
    \end{equation*}
    However, in general, this holds only for $r = p$, as illustrated by a counterexample in \cite{Ferreyra1997}. In fact, by applying the Lizorkin--Freitag formula \cite[Theorem 2]{Freitag1978}, we actually obtain that
     $$ 
     \omega^{\frac{1}{p}} \, T_{\delta,R} : L^{p,r}_{\mathrm{d}x}\big(\omega^{1/p},\mathbb{R}^n\big) \longrightarrow L^{q,r}_{\mathrm{d}x}(\mathbb{R}^n), \quad \text{for all} \ 1\le r \le \infty.$$ 
    The case $r = p$ corresponds to what has already been used in the proof of Theorem \ref{cor:principal**}. But when $r\ne p$, the two source spaces in the displayed formulas are not the same, in agreement with both the counter-example and the above remark.
    \end{rem}

\subsection{Extension to the functional calculus}\label{sec:bhfc}

We present here a consequence of Theorem \ref{thm:principal}.

We recall that $\varphi \in \mathcal{H}^\infty_0(S_\mu)$, for $\mu \in (0, \pi)$ where $S_{\mu} := \left\{ z \in \mathbb{C} \setminus \{0\} , : |\mathrm{arg}(z)| < \mu \right\}$, if $\varphi: S_{\mu} \rightarrow \mathbb{C}$ is holomorphic and there exist constants $C, s > 0$ such that
\begin{equation*}
    |\varphi(z)| \leq C \min(|z|^s, |z|^{-s}), \quad \text{for all} \ z \in S_\mu.
\end{equation*}
Let $p \in (1, \infty)$. We say that $-\Delta_\omega$ admits a bounded holomorphic functional calculus on $L^p_\omega(\mathbb{R}^n)$ if, for every $\mu \in (0, \pi)$ and every $\varphi \in \mathcal{H}^\infty_0(S_\mu)$, and for all $f \in L^2_\omega(\mathbb{R}^n) \cap L^p_\omega(\mathbb{R}^n)$, the following estimate holds:
\begin{equation}\label{eq:Hinfini}
    \| \varphi(-\Delta_\omega)f \|_{L^p_\omega(\mathbb{R}^n)} \leq C \| \varphi \|_{\infty,\mu} \| f \|_{L^p_\omega(\mathbb{R}^n)},
\end{equation}
where $C > 0$ is a constant independent of both $\varphi$ and $f$, and where $\| \varphi \|_{\infty,\mu}$ denotes the $L^\infty$-norm of $\varphi$ on the sector $S_\mu$. By a standard density argument, inequality \eqref{eq:Hinfini} implies that $\varphi(-\Delta_\omega)$ extends to a bounded operator on $L^p_\omega(\mathbb{R}^n)$. Moreover, this estimate remains valid for any bounded holomorphic function $\varphi$ on $S_\mu$. This extension follows readily by combining Fatou's lemma with McIntosh’s convergence lemma in $L^2_\omega(\mathbb{R}^n)$ (see \cite{McIntosh86}).

By combining the estimates in \eqref{eq:GULBComplexe} with the fact that $-\Delta_\omega$ clearly has a bounded $H^\infty$-calculus on $L^2_\omega(\mathbb{R}^n)$, one can apply a result due to Duong and Robinson \cite[Theorem 3.1]{duong1996semigroup} concerning the boundedness of the $H^\infty$-calculus of $-\Delta_\omega$ on $L^p_\omega(\mathbb{R}^n)$.
\begin{lem}\label{lem:DR}
    The operator $-\Delta_\omega$ admits a bounded $H^\infty$-calculus on $L^p_\omega(\mathbb{R}^n)$ for each $p \in (1,\infty)$.
\end{lem}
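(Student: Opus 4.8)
Here is the plan. The idea is to deduce the lemma from the general transference principle of Duong and Robinson \cite[Theorem 3.1]{duong1996semigroup}, which promotes a bounded holomorphic functional calculus from $L^2$ to $L^p$, $1<p<\infty$, as soon as the underlying semigroup acts on a space of homogeneous type and its kernel has Gaussian-type decay. All that is needed is therefore to check that the hypotheses of that theorem hold in our setting.

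First I would record that $(\mathbb{R}^n,|\cdot|,\mathrm d\omega)$ is a space of homogeneous type: this is precisely the doubling inequality \eqref{DoublingMuck}, valid since $\omega\in A_2(\mathbb{R}^n)$, combined with the metric doubling of $\mathbb{R}^n$. Next, since $-\Delta_\omega$ is non-negative, self-adjoint and injective on the Hilbert space $L^2_\omega(\mathbb{R}^n)$, the spectral theorem provides a bounded Borel functional calculus, hence in particular a bounded $\mathcal{H}^\infty(S_\mu)$-calculus, on $L^2_\omega(\mathbb{R}^n)$ for every $\mu\in(0,\pi)$, with $\|\varphi(-\Delta_\omega)\|_{L^2_\omega\to L^2_\omega}\le \sup_{t>0}|\varphi(t)|\le \|\varphi\|_{\infty,S_\mu}$. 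Finally, the complex-time Gaussian bounds \eqref{eq:GULBComplexe} of Corollary \ref{cor:semigroupe}, together with the Gaussian bounds stated there for the kernels of $(t\Delta_\omega)^k e^{t\Delta_\omega}$, supply exactly the kernel input required by \cite[Theorem 3.1]{duong1996semigroup}; no H\"older regularity of $K_t$ is needed, since the Duong--Robinson argument replaces pointwise kernel regularity by the smoothing of the semigroup. Invoking their theorem then yields \eqref{eq:Hinfini} for all $p\in(1,\infty)$ and all $\varphi\in\mathcal{H}^\infty_0(S_\mu)$, which is the assertion of the lemma.

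I do not expect a genuine obstacle: every ingredient has already been assembled in Section \ref{section 2}, and the proof reduces to checking the hypotheses of a known black box. If one prefers not to cite it, one can argue directly: the bounds \eqref{eq:GULBComplexe} show that for $\varphi\in\mathcal{H}^\infty_0(S_\mu)$ the operator $\varphi(-\Delta_\omega)$ is a Calder\'on--Zygmund operator on the homogeneous space $(\mathbb{R}^n,\mathrm d\omega)$, bounded on $L^2_\omega(\mathbb{R}^n)$ by the previous paragraph; a Calder\'on--Zygmund decomposition adapted to $\mathrm d\omega$ gives the weak-type $(1,1)$ bound, Marcinkiewicz interpolation gives $L^p_\omega(\mathbb{R}^n)$ for $1<p\le 2$, and duality together with $\varphi(-\Delta_\omega)^*=\overline{\varphi}(-\Delta_\omega)$ (self-adjointness of $-\Delta_\omega$) gives $2\le p<\infty$. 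The only point demanding a little care, in either route, is to track that the resulting bound is controlled by $\|\varphi\|_{\infty,S_\mu}$ uniformly in $\varphi$, which is built into the statements being quoted.
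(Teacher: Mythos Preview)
Your proposal is correct and follows exactly the paper's approach: the paper simply notes that combining the complex-time Gaussian bounds \eqref{eq:GULBComplexe} with the obvious bounded $H^\infty$-calculus on $L^2_\omega(\mathbb{R}^n)$ (self-adjointness) allows one to invoke \cite[Theorem 3.1]{duong1996semigroup}. Your verification of the doubling hypothesis and the alternative direct Calder\'on--Zygmund argument are fine additions but not needed beyond what the paper already says.
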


We made a point of not using this property so far, using only elementary arguments. Incorporating this property, we can enhance our results as follows.

\begin{cor} \label{cor:BHFC2}
Let $n\ge 1$ and $\omega \in A_2(\mathbb{R}^n)$.  Let $1<p<q<\infty$. Assume that $\omega \in RH_{\frac{q}{p}}(\mathbb{R}^n)$ and set $\alpha=\frac{n}{2}(\frac{1}{p}-\frac{1}{q})$. Assume that $\varphi$ is a holomorphic function on $S_{\mu}$ and let $\psi(z)= z^{-\alpha}\varphi(z)$.
\begin{enumerate}
    \item Assume $\psi$ is bounded. Then there exists a constant $C = C([ \omega  ]_{A_2},[ \omega  ]_{RH_{\frac{q}{p}}},n,p,q)<\infty$ such that
    \begin{equation*}
       \| \omega^{1/p}\psi(-\Delta_\omega) f \|_{L^q_{\mathrm{d}x}(\mathbb{R}^n)} \leq C \| \varphi(-\Delta_\omega) f \|_{L^p_{\omega}(\mathbb{R}^n)}, \quad \forall f \in D((-\Delta_\omega)^\alpha).
    \end{equation*}
    \item Assume $\varphi$ is bounded. Then $\psi(-\Delta_\omega)$ defines a bounded operator from $L^p_\omega(\mathbb{R}^n)$ to $L^q_{{\omega}^{q/p}}(\mathbb{R}^n)$ or, equivalently, $w^{1/p}\psi(-\Delta_\omega)$ is bounded from $L^p_\omega(\mathbb{R}^n)$ to $L^q_{\mathrm{d}x}(\mathbb{R}^n)$, 
with bound $C \|\varphi\|_{\infty, \mu}$ with $C$ a constant as above also depending on $\mu$. 
\end{enumerate}
\end{cor}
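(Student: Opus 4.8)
The plan is to derive both items from Theorem \ref{thm:principal} by factoring $\psi(-\Delta_\omega)=(-\Delta_\omega)^{-\alpha}\varphi(-\Delta_\omega)$, using only the elementary spectral fact (recalled in Section \ref{section 2}) that a bounded function of the self-adjoint operator $-\Delta_\omega$ commutes with the closed operators $(-\Delta_\omega)^{\pm\alpha}$ and maps their domains into themselves, together with Lemma \ref{lem:DR}.

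For item (1), $\psi\in\mathcal H^\infty(S_\mu)$ is bounded, so $\psi(-\Delta_\omega)$ acts boundedly on $L^2_\omega(\mathbb{R}^n)$; for $f\in D((-\Delta_\omega)^\alpha)$ one gets $\psi(-\Delta_\omega)f\in D((-\Delta_\omega)^\alpha)$ with
\[
(-\Delta_\omega)^\alpha\psi(-\Delta_\omega)f=\psi(-\Delta_\omega)(-\Delta_\omega)^\alpha f=:\varphi(-\Delta_\omega)f,
\]
the last equality being the meaning we give to the right-hand side since $\varphi(z)=z^\alpha\psi(z)$ is only assumed holomorphic. Applying \eqref{eq:fractionalHLS} to $g:=\psi(-\Delta_\omega)f\in D((-\Delta_\omega)^\alpha)$ yields
\[
\|\omega^{1/p}\psi(-\Delta_\omega)f\|_{L^q_{\mathrm{d}x}(\mathbb{R}^n)}=\|g\|_{L^q_{\omega^{q/p}}(\mathbb{R}^n)}\le C\|(-\Delta_\omega)^\alpha g\|_{L^p_\omega(\mathbb{R}^n)}=C\|\varphi(-\Delta_\omega)f\|_{L^p_\omega(\mathbb{R}^n)},
\]
which is the claim; this part uses nothing beyond Theorem \ref{thm:principal}.

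For item (2), $\psi$ need not be bounded, so one cannot invoke an $L^2_\omega$ calculus for it directly; instead, since $\varphi$ is bounded, Lemma \ref{lem:DR} together with the extension of the $H^\infty$-calculus to bounded holomorphic symbols recalled before it shows that $\varphi(-\Delta_\omega)$ extends to a bounded operator on $L^p_\omega(\mathbb{R}^n)$ with norm $\le C\|\varphi\|_{\infty,\mu}$. Let $R_\alpha\colon L^p_\omega(\mathbb{R}^n)\to L^q_{\omega^{q/p}}(\mathbb{R}^n)$ be the bounded extension of $(-\Delta_\omega)^{-\alpha}$ furnished by Theorem \ref{thm:principal}, and define $\psi(-\Delta_\omega):=R_\alpha\circ\varphi(-\Delta_\omega)$, which is bounded from $L^p_\omega(\mathbb{R}^n)$ to $L^q_{\omega^{q/p}}(\mathbb{R}^n)$ with the asserted norm $C\|\varphi\|_{\infty,\mu}$. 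To check this composition is the natural object, one takes $f$ in the subspace $D((-\Delta_\omega)^{-\alpha})\cap L^p_\omega(\mathbb{R}^n)\cap L^2_\omega(\mathbb{R}^n)$ — dense in $L^p_\omega(\mathbb{R}^n)$ by exactly Step 4 of the proof of Theorem \ref{thm:principal} — and uses domain preservation to get $\varphi(-\Delta_\omega)f\in D((-\Delta_\omega)^{-\alpha})$ and
\[
R_\alpha\varphi(-\Delta_\omega)f=(-\Delta_\omega)^{-\alpha}\varphi(-\Delta_\omega)f=\varphi(-\Delta_\omega)(-\Delta_\omega)^{-\alpha}f=\psi(-\Delta_\omega)f,
\]
the last expression being meaningful because $(-\Delta_\omega)^{-\alpha}f\in L^2_\omega(\mathbb{R}^n)$ and $\varphi$ is bounded, and the identification with $\psi(-\Delta_\omega)f$ coming from $\psi(z)z^\alpha=\varphi(z)$.

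The whole argument is essentially bookkeeping, the genuine inputs being Theorem \ref{thm:principal}, Lemma \ref{lem:DR}, and the passage from $\mathcal H^\infty_0$ to bounded holomorphic symbols. The one place a careless argument would break is in item (2): the operator "$\psi(-\Delta_\omega)$" is not a priori bounded (even on $L^2_\omega(\mathbb{R}^n)$, as $\psi$ may blow up like $|z|^{-\alpha}$ at the origin), so one must not manipulate it as if it were, but instead work with the honest composition $R_\alpha\circ\varphi(-\Delta_\omega)$ and only afterwards verify it agrees with $\psi(-\Delta_\omega)$ on a dense subspace where both sides are unambiguously defined; this is precisely why the density statement from Step 4 of Theorem \ref{thm:principal}, rather than a direct computation on all of $L^2_\omega(\mathbb{R}^n)\cap L^p_\omega(\mathbb{R}^n)$, is needed.
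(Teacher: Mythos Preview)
Your proof is correct and follows the same route as the paper: for (1) you set $g=\psi(-\Delta_\omega)f$, use that bounded functions of $-\Delta_\omega$ preserve $D((-\Delta_\omega)^\alpha)$ and commute with $(-\Delta_\omega)^\alpha$, and apply \eqref{eq:fractionalHLS} to $g$; for (2) you factor $\psi(-\Delta_\omega)=(-\Delta_\omega)^{-\alpha}\varphi(-\Delta_\omega)$ and combine Theorem~\ref{thm:principal} with Lemma~\ref{lem:DR}. The only difference is that in (2) you are more explicit than the paper in defining the composition $R_\alpha\circ\varphi(-\Delta_\omega)$ first and then checking consistency with the functional-calculus meaning of $\psi(-\Delta_\omega)$ on the dense subspace from Step~4, whereas the paper writes the identity $\psi(-\Delta_\omega)f=(-\Delta_\omega)^{-\alpha}\varphi(-\Delta_\omega)f$ directly; your extra care is justified and does not change the substance.
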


\begin{proof}
    
    {For the first point, we employ the standard argument (see \cite[Theorem 5.15]{egert2024harmonic}) stating that if $f \in D((-\Delta_\omega)^\alpha)$ and $\psi$ is bounded, then $g = \psi(-\Delta_\omega)f \in D((-\Delta_\omega)^\alpha)$, with $(-\Delta_\omega)^\alpha g = \psi(-\Delta_\omega)(-\Delta_\omega)^\alpha f = \varphi(-\Delta_\omega)f$. Applying inequality \eqref{eq:fractionalHLS} to $g$ and substituting completes the argument.}
    
    For the second point, let $f\in L^p_{\omega}(\mathbb{R}^n)$ and set $g=\varphi(-\Delta_\omega)f\in L^p_{\omega}(\mathbb{R}^n)$ by Lemma \ref{lem:DR}. Then using \eqref{eq:fractionalHLS} and \eqref{eq:Hinfini}, 
    \begin{equation*}
       \| \omega^{1/p}\psi(-\Delta_\omega) f \|_{L^q_{\mathrm{d}x}(\mathbb{R}^n)}=\| \omega^{1/p}(-\Delta_\omega)^{-\alpha} g \|_{L^q_{\mathrm{d}x}(\mathbb{R}^n)}  \leq C \| g \|_{L^p_{\omega}(\mathbb{R}^n)} \le C' \| \varphi \|_{\infty,\mu} \|  f \|_{L^p_{\omega}(\mathbb{R}^n)}.
    \end{equation*}
\end{proof}

\begin{rem}
    By the same interpolation argument as in the proof of Theorem \ref{thm:principal*}, when $\varphi$ is bounded, the operator $\psi(-\Delta_\omega)$ is bounded from $L^p_\omega(\mathbb{R}^n)$ to $L^{q,p}_\omega\big(\omega^{\frac{1}{p}-\frac{1}{q}}, \mathbb{R}^n\big)$, with a bound of the form $C \|\varphi\|_{\infty, \mu}$, where $C$ is a positive constant depending only on $[\omega]_{A_2}$, $[\omega]_{RH_{\frac{q}{p}}}$, $n$, $p$, $q$, and $\mu$.
\end{rem}

\subsection{Sharpness of the condition on the weight}
In this subsection, we check that the H\"older inverse condition assumed to $\omega$ in Theorem \ref{thm:principal} and Lemma \ref{lem:HLS} is the optimal condition.
\begin{prop}
    Let $1 \le p<q \le \infty$ and set $\alpha=\frac{n}{2}(\frac{1}{p}-\frac{1}{q})$. Consider the following two assertions:
    \begin{enumerate}
        \item There is a constant $C <\infty $ such that for all $t > 0$ and $f \in L^2_\omega(\mathbb{R}^n) \cap L^p_\omega(\mathbb{R}^n)$, the following inequality holds:
    \begin{equation*}
        \| e^{t\Delta_\omega} f \|_{L^q_{{\omega}^{q/p}}(\mathbb{R}^n)} \leq C t^{-\alpha} \|  f \|_{L^p_{\omega}(\mathbb{R}^n)}.
    \end{equation*}
    \item Assume $1<p$ and $q<\infty$. There is a constant $C < \infty$ such that for all $f \in D((-\Delta_\omega)^\alpha)$, the following inequality holds:
    \begin{equation*}
        \| f \|_{L^q_{{\omega}^{q/p}}(\mathbb{R}^n)} \leq C \| (-\Delta_\omega)^\alpha f \|_{L^p_{\omega}(\mathbb{R}^n)}.
    \end{equation*}
    \end{enumerate}
    If either (1) or (2) holds, then $\omega \in RH_{\frac{q}{p}}(\mathbb{R}^n)$. 
\end{prop}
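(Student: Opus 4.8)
The plan is to test (1) and (2) against indicators of balls and read off the reverse Hölder inequality directly. Fix a ball $B=B(x_0,r)$ and set $t=r^2$, so that $|B|\sim t^{n/2}$ and $t^{-\alpha}\sim|B|^{-(\frac1p-\frac1q)}$ up to dimensional constants. The decisive input is the Gaussian \emph{lower} bound of Lemma~\ref{lem:GULB}: for $x,y\in B$ one has $|x-y|\le 2r=2\sqrt t$, hence $e^{-c|x-y|^2/t}\ge e^{-4c}$, and $B(x,\sqrt t)=B(x,r)\subset 2B$ gives $\omega_t(x)\le\omega(2B)\le D\,\omega(B)$, and likewise for $y$. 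Substituting into \eqref{eq:GULB} and integrating over $y\in B$ then yields the pointwise lower bound
\[
  e^{t\Delta_\omega}\mathbf 1_B(x)=\int_B K_t(x,y)\,\mathrm d\omega(y)\ \ge\ c_0:=\frac{e^{-4c}}{CD}>0\qquad\text{for a.e.\ }x\in B,
\]
with $c_0$ depending only on $n$ and $[\omega]_{A_2}$.

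Assume (1). Since $\mathbf 1_B\in L^2_\omega(\mathbb{R}^n)\cap L^p_\omega(\mathbb{R}^n)$, I would take $f=\mathbf 1_B$, whose $L^p_\omega$-norm equals $\omega(B)^{1/p}$. If $q<\infty$, the left-hand side of (1) is bounded below by $c_0\big(\int_B\omega^{q/p}\big)^{1/q}$, so (1) becomes $\big(\int_B\omega^{q/p}\big)^{1/q}\le (C/c_0)\,|B|^{-(\frac1p-\frac1q)}\omega(B)^{1/p}$; raising this to the power $p$ and dividing by $|B|^{p/q}$ gives precisely $\big(\frac1{|B|}\int_B\omega^{q/p}\big)^{p/q}\le (C/c_0)^p\,\frac1{|B|}\int_B\omega$, i.e.\ $\omega\in RH_{q/p}(\mathbb{R}^n)$ (reverse Hölder over balls, the same class as over cubes). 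If $q=\infty$, the left-hand side is $\ge c_0\,\omega(x)^{1/p}$ for a.e.\ $x\in B$, and the identical computation yields $\omega(x)\le (C/c_0)^p\frac1{|B|}\int_B\omega$ for a.e.\ $x\in B$, that is $\omega\in RH_\infty(\mathbb{R}^n)$. As $B$ is arbitrary, this settles the implication from (1), endpoints $p=1$ and $q=\infty$ included.

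Now assume (2), so $1<p<q<\infty$. Here I would test (2) with $f:=e^{t\Delta_\omega}\mathbf 1_B$, which lies in $D((-\Delta_\omega)^\alpha)$ by analyticity of the semigroup, with $(-\Delta_\omega)^\alpha f=(-\Delta_\omega)^\alpha e^{t\Delta_\omega}\mathbf 1_B$ and
\[
  \|(-\Delta_\omega)^\alpha e^{t\Delta_\omega}\|_{L^p_\omega\to L^p_\omega}\le C\,t^{-\alpha}\qquad\text{uniformly in }t>0 .
\]
Since $z\mapsto z^\alpha e^{-z}$ belongs to $\mathcal H^\infty_0(S_\mu)$, this last bound follows from Lemma~\ref{lem:DR} and scaling; alternatively one may use the subordination identity $(-\Delta_\omega)^\alpha e^{t\Delta_\omega}=\Gamma(m-\alpha)^{-1}\int_0^\infty s^{m-\alpha-1}(-\Delta_\omega)^m e^{(t+s)\Delta_\omega}\,\mathrm ds$ for an integer $m>\alpha$, together with the uniform bound $\|(\tau\Delta_\omega)^m e^{\tau\Delta_\omega}\|_{L^p_\omega\to L^p_\omega}\lesssim 1$ from Proposition~\ref{prop:boundednessandconvergence}. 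Then $\|(-\Delta_\omega)^\alpha f\|_{L^p_\omega}\le C\,t^{-\alpha}\omega(B)^{1/p}$, while the left-hand side of (2) is $\ge c_0\big(\int_B\omega^{q/p}\big)^{1/q}$ exactly as above, and one concludes as in case (1). (Equivalently: (2) applied to $e^{t\Delta_\omega}\mathbf 1_B$ yields the special case $f=\mathbf 1_B$ of (1), which is all that was used.)

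I expect the only genuinely delicate point to be the lower-bound step: one must bound $\min(\omega_t(x),\omega_t(y))$ from above by a fixed multiple of $\omega(B)$ uniformly for $x,y\in B$ — this is where the $A_2$ doubling property enters — and check that the resulting multiplicative constant does not spoil the conclusion; everything else is bookkeeping of the scaling exponents. Note that this direction produces the plain $RH_{q/p}$ bound, matching the hypothesis of Lemma~\ref{lem:HLS} exactly, so the open-endedness of the reverse Hölder class plays no role in it.
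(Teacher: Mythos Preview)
Your argument is correct. For assertion (1) it is essentially the paper's proof: test with the indicator of a ball (the paper uses a cube) at the matching time, invoke the Gaussian lower bound together with doubling to get $e^{t\Delta_\omega}\mathbf 1_B\ge c_0$ on $B$, and read off $RH_{q/p}$; the $q=\infty$ case is handled identically in both.

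For assertion (2) your route differs slightly from the paper's but is equally valid and arguably cleaner. The paper applies (2) to $T_{\delta,R}\mathbb 1_Q=\frac{1}{\Gamma(\alpha)}\int_\delta^R t^{\alpha-1}e^{t\Delta_\omega}\mathbb 1_Q\,\mathrm dt$ with $\delta=\ell^2$, $R=2\ell^2$, noting that $(-\Delta_\omega)^\alpha T_{\delta,R}=\varphi_{\delta,R}(-\Delta_\omega)$ for a function uniformly bounded on sectors, and then invokes Lemma~\ref{lem:DR}. You instead apply (2) directly to the single-time function $e^{t\Delta_\omega}\mathbf 1_B$ and use the bound $\|(-\Delta_\omega)^\alpha e^{t\Delta_\omega}\|_{L^p_\omega\to L^p_\omega}\lesssim t^{-\alpha}$. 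Both arguments ultimately rest on a functional-calculus estimate for $L^p_\omega$; your subordination alternative (via Proposition~\ref{prop:boundednessandconvergence}) has the minor advantage of bypassing the full $H^\infty$-calculus of Lemma~\ref{lem:DR}, at the cost of an extra integral computation. The paper's integrated test function is not needed here precisely because you already have the pointwise lower bound $e^{t\Delta_\omega}\mathbf 1_B\ge c_0$ at a single time.
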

\begin{proof}
    Assume that (1) holds. Consider the case $q<\infty$. Let $Q \subset \mathbb{R}^n$ a cube with length $\ell >0$. By assumption, we have 
    $$\| e^{\ell^2\Delta_\omega} \mathbb{1}_Q \|_{L^q_{{\omega}^{q/p}}(\mathbb{R}^n)} \leq C \ell^{-n(\frac{1}{p}-\frac{1}{q})} \|  \mathbb{1}_Q \|_{L^p_{\omega}(\mathbb{R}^n)}. $$
    In particular, 
    $$ \| e^{\ell^2\Delta_\omega} \mathbb{1}_Q \|^q_{L^q_{{\omega}^{q/p}}(Q)} \leq C \ell^{-n(\frac{q}{p}-1)} \omega(Q)^{q/p} = C |Q|^{1-\frac{q}{p}} \omega(Q)^{q/p}.$$
    By using the lower bound on the heat kernel in Lemma \ref{lem:GULB}, we have
    $$ \int_{Q} \bigg( \int_Q \frac{e^{-\frac{c|x-y|^2}{\ell^2}}}{\omega_{\ell^2}(y)} \, \mathrm d \omega(y)  \bigg)^q \omega^{q/p}(x) \, \mathrm d x \leq C |Q|^{1-\frac{q}{p}} \omega(Q)^{q/p}. $$
    Using the doubling property \eqref{DoublingMuck} with the equivalence of norms on $\mathbb{R}^n$, we see easily that there exits a constant $\gamma=\gamma(n,D)>0$ such that 
    $$ \gamma \leq \inf_{x\in Q} \int_Q \frac{e^{-\frac{c|x-y|^2}{\ell^2}}}{\omega_{\ell^2}(y)} \, \mathrm d \omega(y) $$
    Thus, $$  \int_{Q} \omega^{q/p}(x) \, \mathrm d x \leq C |Q|^{1-\frac{q}{p}} \omega(Q)^{q/p}. $$
    Therefore, $\omega \in RH_{\frac{q}{p}}(\mathbb{R}^n)$. 
    
    When $q=\infty$ we recall that (1) reads $\| w^{1/p}e^{t\Delta_\omega} f \|_{L^\infty(\mathbb{R}^n)} \leq C t^{-\alpha} \|  f \|_{L^p_{\omega}(\mathbb{R}^n)}$ and the argument is similar using the lower bound.

    Now, assume that (2) holds. Let $f\in L^2_\omega(\mathbb{R}^n) \cap L^p_\omega(\mathbb{R}^n)$.  First, $T_{\delta, R}f \in D(-\Delta_\omega)^{\alpha})$, so we have \begin{equation*}
        \| T_{\delta, R}f\|_{L^q_{{\omega}^{q/p}}(\mathbb{R}^n)} \leq C \| (-\Delta_\omega)^\alpha T_{\delta, R}f \|_{L^p_{\omega}(\mathbb{R}^n)}.
    \end{equation*}
   Now, $(-\Delta_\omega )^\alpha T_{\delta, R}f=\varphi_{\delta, R}(-\Delta_\omega )f$ for a holomorphic function $\varphi_{\delta, R}$ that is bounded on sectors $S_\mu$ uniformly in $\delta, R$. Thus, using Lemma \ref{lem:DR}, we obtain
    \begin{equation}\label{eq:checkRH}
        \left\| \int_{\delta}^{R}t^\alpha e^{t\Delta_\omega}f \, \frac{\mathrm{d}t}{t} \right\|_{L^q_{{\omega}^{q/p}}(\mathbb{R}^n)} \leq C \, \Gamma(\alpha)  \left\| f \right\|_{L^p_{\omega}(\mathbb{R}^n)}.
    \end{equation}
    Finally, to prove that $\omega \in RH_{\frac{q}{p}}(\mathbb{R}^n)$, we fix an arbitrary cube $Q$ and denote its side length by $\ell$. In the inequality \eqref{eq:checkRH}, we choose $\delta=\ell^2$, $R=2\ell^2$ and $f=\mathbb{1}_Q$. From the lower bound on the heat kernel in Lemma \ref{lem:GULB}, we  obtain as before
    $$  \int_{Q} \omega^{q/p}(x) \, \mathrm d x \leq \Tilde{C} |Q|^{1-\frac{q}{p}} \omega(Q)^{q/p}, $$
    where $\Tilde{C}<\infty$ is a positive constant depending only on $C$, $\alpha$, $p$, $q$, $n$, $D$ and the choice of $\mu$. Therefore, $\omega \in RH_{\frac{q}{p}}(\mathbb{R}^n)$.
\end{proof}

\section{Comparison with related results}

This brief section aims to convince the reader that our results are of a different nature to existing results in the literature. The ones imply the others partially and under restrictive conditions {and  the implications involve more complex tools.}

As we said before, the Sobolev embeddings for the unweighted Laplacian are equivalent to the classical Hardy-Littlewood-Sobolev inequalities. In \cite{muckenhoupt1974weighted}, Muckenhoupt and Wheeden extend this embeddings for the unweighted Laplacian to weighted spaces.

For all $\beta>0$, the Riesz potential $I_\beta$ is defined for suitable functions as 
\begin{equation*}
    I_\beta(f)(x) =\int_{\mathbb{R}^n} \frac{f(y)}{ \ \ |x-y|^{n-\beta }} \ \mathrm{d}y.
\end{equation*}
We have the classical equality $(-\Delta)^{-\alpha}f=c_{n,\alpha}I_{2\alpha} f$ with $c_{n,\alpha}$ is a positive constant depending on $n$ and $\alpha$, again for suitable $f$. In what follows, think of $\alpha=\beta/2$.
\begin{thm}[\cite{muckenhoupt1974weighted}]\label{thm:MW}
    Let $\Tilde{\omega}$ be a weight. Let $\beta \in (0,n)$, $p \in [1,\frac{n}{\beta})$ and $q:=\frac{np}{n-\beta p}$, that is, $\frac{1}{p}-\frac{1}{q}=\frac{\beta}{n}$. Then the following is true.
    \begin{enumerate}
        \item If $p>1$, then $I_\beta$ is bounded from $L^p_{\Tilde{w}^p}(\mathbb{R}^n)$ to $L^q_{\Tilde{w}^q}(\mathbb{R}^n)$ if and only if $\Tilde{\omega} \in A_{p,q}(\mathbb{R}^n)$.
        \item If $p=1$, then $I_\beta$ is bounded from $L^1_{\Tilde{w}}(\mathbb{R}^n)$ to $L^{q,\infty}_{\Tilde{w}^q}(\mathbb{R}^n)$ if and only if $\Tilde{\omega} \in A_{1,q}(\mathbb{R}^n)$.
    \end{enumerate}
\end{thm}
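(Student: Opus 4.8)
The plan is to prove the two implications of the equivalence separately, by purely real‑variable means (in contrast with the heat‑kernel approach used elsewhere in this paper, which is tied to having $\omega$ inside the measure). \emph{Necessity} will follow from testing the (strong, resp.\ weak) inequality on functions concentrated on a single cube; \emph{sufficiency} will follow from the pointwise domination of $I_\beta$ by a product of two fractional maximal operators (Welland's inequality) together with the weighted bounds for those operators.

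\textbf{Necessity.} Assume $p>1$ and that $I_\beta\colon L^p_{\tilde\omega^p}(\mathbb{R}^n)\to L^q_{\tilde\omega^q}(\mathbb{R}^n)$ is bounded. Fix a cube $Q$ of side length $\ell$, set $\sigma=\tilde\omega^{-p'}$, and test on $f=\sigma\mathbb{1}_Q$ (truncating $\sigma$ at height $N$ and letting $N\to\infty$ if needed). Since $p(1-p')=-p'$ one has $\|f\|_{L^p_{\tilde\omega^p}}=\sigma(Q)^{1/p}$, while for every $x\in Q$ the trivial bound $|x-y|\le\sqrt n\,\ell$ gives $I_\beta f(x)\ge(\sqrt n\,\ell)^{\beta-n}\sigma(Q)$. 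Integrating over $Q$ against $\tilde\omega^q$ and using $\beta-n=-\tfrac n{p'}-\tfrac nq$, boundedness of $I_\beta$ becomes
\begin{equation*}
\Big(\frac1{|Q|}\int_Q\tilde\omega^q\,\mathrm{d}x\Big)^{1/q}\Big(\frac1{|Q|}\int_Q\tilde\omega^{-p'}\,\mathrm{d}x\Big)^{1/p'}\le C ,
\end{equation*}
i.e.\ $\tilde\omega\in A_{p,q}(\mathbb{R}^n)$. For $p=1$, test the weak inequality on the normalized indicators $f_\varepsilon=|B(y_0,\varepsilon)|^{-1}\mathbb{1}_{B(y_0,\varepsilon)}$. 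Letting $\varepsilon\to0$ at a Lebesgue point $y_0$ of $\tilde\omega$ one gets $\|f_\varepsilon\|_{L^1_{\tilde\omega}}\to\tilde\omega(y_0)$ and $I_\beta f_\varepsilon(x)\to|x-y_0|^{\beta-n}$ pointwise; since $\{x:\ |x-y_0|^{\beta-n}>\lambda\}=B(y_0,\lambda^{-1/(n-\beta)})$ and $\beta-n+\tfrac nq=0$ when $p=1$, lower semicontinuity of the weak‑$L^q$ quasi‑norm under a.e.\ convergence turns the weak‑$(1,q)$ bound into $\big(\tfrac1{|Q|}\int_Q\tilde\omega^q\big)^{1/q}\le C\,\tilde\omega(y_0)$ for a.e.\ $y_0$ and all cubes $Q\ni y_0$, that is, $\tilde\omega\in A_{1,q}(\mathbb{R}^n)$.

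\textbf{Sufficiency.} The key input is Welland's pointwise inequality: for $0<\epsilon<\min(\beta,\,n-\beta,\,n/q)$ there is $C=C(n,\beta,\epsilon)$ with
\begin{equation*}
|I_\beta f(x)|\le C\,\big(M_{\beta-\epsilon}f(x)\big)^{1/2}\big(M_{\beta+\epsilon}f(x)\big)^{1/2},\qquad M_\gamma f(x):=\sup_{Q\ni x}\ell(Q)^{\gamma}\frac1{|Q|}\int_Q|f|\,\mathrm{d}x ,
\end{equation*}
proved by splitting $I_\beta f(x)$ into the integrals over $\{|x-y|<R\}$ and $\{|x-y|\ge R\}$ and optimizing in $R$ (a localized Hedberg argument). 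Define $q_\pm$ by $\tfrac1{q_\pm}=\tfrac1p-\tfrac{\beta\pm\epsilon}{n}$, so that $q_-<q<q_+$ and $\tfrac q{2q_-}+\tfrac q{2q_+}=1$. Raising Welland's inequality to the power $q$, writing $\tilde\omega^q=\tilde\omega^{q/2}\tilde\omega^{q/2}$, and applying Hölder with exponents $\tfrac{2q_-}{q}$ and $\tfrac{2q_+}{q}$ gives
\begin{equation*}
\|I_\beta f\|_{L^q_{\tilde\omega^q}}\le C\,\|M_{\beta-\epsilon}f\|_{L^{q_-}_{\tilde\omega^{q_-}}}^{1/2}\,\|M_{\beta+\epsilon}f\|_{L^{q_+}_{\tilde\omega^{q_+}}}^{1/2},
\end{equation*}
so it suffices to bound $M_{\beta\pm\epsilon}\colon L^p_{\tilde\omega^p}\to L^{q_\pm}_{\tilde\omega^{q_\pm}}$. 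Here $\tilde\omega\in A_{p,q_-}$ is immediate from $\tilde\omega\in A_{p,q}$ and the power‑mean inequality ($q_-<q$), while $\tilde\omega\in A_{p,q_+}$ holds \emph{provided $\epsilon$ is small enough depending on $[\tilde\omega]_{A_{p,q}}$}, by the open‑ended reverse Hölder self‑improvement of the $A_\infty$ weight $\tilde\omega^q$ — the same open‑endedness exploited in Lemma~\ref{lem:HLS}. Then one establishes, for $1<a<b<\infty$ with $\tfrac1a-\tfrac1b=\tfrac\gamma n$, that $M_\gamma\colon L^a_{\tilde\omega^a}\to L^{b,\infty}_{\tilde\omega^b}$ under $\tilde\omega\in A_{a,b}$, via a Vitali covering of $\{M_\gamma f>\lambda\}$ by cubes $Q$ with $\ell(Q)^\gamma\frac1{|Q|}\int_Q|f|>\lambda$, on each of which $A_{a,b}$ and Hölder give $\tilde\omega^b(Q)\lesssim\lambda^{-b}\big(\int_Q|f|\,\tilde\omega^a\big)^{b/a}$; the strong‑type bound follows by Marcinkiewicz interpolation between two such weak estimates obtained after an $\epsilon$‑perturbation of the exponents (again legitimate by self‑improvement). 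For the endpoint $p=1$ one argues directly: a Calderón–Zygmund decomposition of $f$ at height $\lambda$ relative to $\mathrm d\tilde\omega$, combined with the size estimate for $I_\beta$ on cubes and the pointwise $A_{1,q}$ condition, yields $I_\beta\colon L^1_{\tilde\omega}\to L^{q,\infty}_{\tilde\omega^q}$.

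\textbf{Main obstacle.} The substance is entirely in the sufficiency half, and within it the only genuinely nontrivial ingredient is the self‑improvement machinery for $A_{p,q}$ weights (equivalently, the reverse Hölder inequality for $A_\infty$ weights): it is precisely what licenses the exponent perturbations in Welland's inequality and in the Marcinkiewicz step. Everything else is scaling bookkeeping and standard covering arguments; the endpoint $p=1$ is a secondary point needing a separate Calderón–Zygmund decomposition rather than interpolation.
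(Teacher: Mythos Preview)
The paper does not prove Theorem~\ref{thm:MW}; it is quoted verbatim from Muckenhoupt--Wheeden \cite{muckenhoupt1974weighted} and used as a black box in the comparison section. There is therefore no ``paper's own proof'' to compare against.

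Your sketch follows the well-known Welland route (pointwise domination of $I_\beta$ by a geometric mean of two fractional maximal functions, then weighted bounds for $M_{\beta\pm\epsilon}$), which is a standard alternative to the original good-$\lambda$ argument in \cite{muckenhoupt1974weighted}. The necessity arguments are correct. For sufficiency, one step deserves a flag: you write that the strong-type bound $M_\gamma\colon L^a_{\tilde\omega^a}\to L^b_{\tilde\omega^b}$ ``follows by Marcinkiewicz interpolation between two such weak estimates obtained after an $\epsilon$-perturbation of the exponents.'' But perturbing $\gamma$ changes both the target exponent \emph{and} the target weight $\tilde\omega^{b_\pm}$, and ordinary Marcinkiewicz interpolation does not accommodate a change of measure on the target side; one would need a Stein--Weiss-type interpolation with change of measures (as used in Step~2 of the proof of Theorem~\ref{thm:principal}), or else a different reduction altogether --- e.g.\ the standard device of rewriting $\tilde\omega\in A_{p,q}$ as $\tilde\omega^q\in A_{1+q/p'}$ and reducing the strong bound for $M_\gamma$ to the strong bound for the Hardy--Littlewood maximal function on $L^{1+q/p'}_{\tilde\omega^q}$. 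Either fix is routine, but as written the Marcinkiewicz appeal is not justified. The endpoint $p=1$ sketch is correct in spirit, though in practice one usually handles it via the weak-type bound for $M_\beta$ (which is immediate from the $A_{1,q}$ condition and Vitali) together with the comparison $I_\beta f \lesssim M_\beta f$ on the ``good'' part, rather than a full Calder\'on--Zygmund decomposition.
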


For $\alpha=1/2$, then $I_1$ can be used  to, roughly speaking, invert of the gradient and by construction $\| \nabla f \|_{L^2_\omega(\mathbb{R}^n)}= \| (-\Delta_\omega )^{1/2 } f \|_{L^2_\omega(\mathbb{R}^n)}$
on $H^1_\omega(\mathbb{R}^n)$. So, we may
 also make use of the following result concerning the boundedness of the Riesz transform, originally established by Cruz-Uribe, Martell, and Rios \cite{cruz2017kato}. 

\begin{thm}[\cite{cruz2017kato}]\label{thm:TRiesz}
 The Riesz transform $\nabla (-\Delta_\omega)^{-1/2}$ is of weak type $(1,1)$  and  is of strong type $(p,p)$  for all $1< p < q_+(-\Delta_\omega)$, where this last exponent is greater than 2.
\end{thm}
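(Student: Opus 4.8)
The plan is to run the by-now-classical machinery for Riesz transforms of operators whose heat semigroup enjoys Gaussian upper bounds, transplanted from $(\mathbb{R}^n,|\cdot|,\mathrm dx)$ to the doubling metric-measure space $(\mathbb{R}^n,|\cdot|,\mathrm d\omega)$. The two structural inputs are: the $L^2_\omega$ identity $\|\nabla(-\Delta_\omega)^{-1/2}f\|_{2,\omega}=\|f\|_{2,\omega}$, immediate from the defining form of $-\Delta_\omega$ and its injectivity; and $L^2_\omega$ off-diagonal (Davies--Gaffney) estimates for the gradient semigroup,
\[
\|\nabla e^{t\Delta_\omega}u\|_{L^2_\omega(E)}\le \frac{C}{\sqrt t}\,e^{-c\,d(E,F)^2/t}\,\|u\|_{L^2_\omega(F)},
\]
for all Borel sets $E,F$ and all $u$ supported in $F$. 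The latter follows by splitting $e^{t\Delta_\omega}=e^{(t/2)\Delta_\omega}e^{(t/2)\Delta_\omega}$, using the pointwise Gaussian bound of Lemma~\ref{lem:GULB} for one factor to produce $L^2_\omega$ off-diagonal decay, and the Caccioppoli inequality for $-\Delta_\omega$ (a soft energy estimate valid by construction) together with analyticity to control $\sqrt t\,\nabla e^{(t/2)\Delta_\omega}$. Starting from the subordination formula $\nabla(-\Delta_\omega)^{-1/2}=\pi^{-1/2}\int_0^\infty \nabla e^{t\Delta_\omega}\,\mathrm dt/\sqrt t$, all the estimates below are deduced from these two facts without any pointwise control of $\nabla_x K_t(x,y)$.

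For the weak type $(1,1)$ bound I would perform a Calder\'on--Zygmund decomposition of $f$ at height $\lambda$ \emph{with respect to the doubling measure} $\mathrm d\omega$: $f=g+\sum_i b_i$, with $\|g\|_\infty\lesssim\lambda$, $\|g\|_{2,\omega}^2\lesssim\lambda\|f\|_{L^1_\omega}$, each $b_i$ supported in a cube $Q_i$ of side length $\ell_i$ with $\int_{Q_i}b_i\,\mathrm d\omega=0$, and $\sum_i\omega(Q_i)\lesssim\lambda^{-1}\|f\|_{L^1_\omega}$. The good part is dispatched by $L^2_\omega$-boundedness and Chebyshev. For the bad part, on each $Q_i$ I split
\[
\nabla(-\Delta_\omega)^{-1/2}b_i=\nabla(-\Delta_\omega)^{-1/2}\bigl(I-e^{\ell_i^2\Delta_\omega}\bigr)^m b_i+\nabla(-\Delta_\omega)^{-1/2}\Bigl(I-\bigl(I-e^{\ell_i^2\Delta_\omega}\bigr)^m\Bigr)b_i,
\]
for a fixed large $m$. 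The second term, expanded as a combination of $\nabla(-\Delta_\omega)^{-1/2}e^{k\ell_i^2\Delta_\omega}$, is estimated on $\mathbb{R}^n\setminus 2Q_i$ by decomposing into dyadic annuli and summing the Gaffney tails, the polynomial volume growth of $\omega(2^jQ_i)$ controlled by \eqref{MuckProportion}--\eqref{DoublingMuck} being beaten by the Gaussian factor; the first term is handled on $2Q_i$ in $L^2_\omega$, using that $(I-e^{\ell_i^2\Delta_\omega})^m$ is $L^2_\omega$-bounded, the $L^1$-normalization of $b_i$, and the cancellation that makes $\nabla(-\Delta_\omega)^{-1/2}(I-e^{\ell_i^2\Delta_\omega})^m$ smoothing at scale $\ell_i$. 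Summing over $i$ gives $\omega(\{|\nabla(-\Delta_\omega)^{-1/2}f|>\lambda\})\lesssim\lambda^{-1}\|f\|_{L^1_\omega}$, and Marcinkiewicz interpolation with $L^2_\omega$ yields strong type $(p,p)$ for $1<p\le 2$.

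For $2<p<q_+(-\Delta_\omega)$ I would use a local sharp-maximal-function / good-$\lambda$ argument in the style of Auscher's memoir and \cite{auscher2007necessary}, where $q_+(-\Delta_\omega)$ is \emph{defined} as the supremum of exponents $p$ for which $\{\sqrt t\,\nabla e^{t\Delta_\omega}\}_{t>0}$ is uniformly bounded on $L^p_\omega(\mathbb{R}^n)$. Granting $q_+>2$, fix $2<p_0<q_+$; by writing $f=f\mathbb{1}_{4B}+f\mathbb{1}_{(4B)^c}$, estimating the local part with $L^2_\omega$-boundedness of $\nabla(-\Delta_\omega)^{-1/2}$ and the global part with the Gaffney decay of $\sqrt t\,\nabla e^{t\Delta_\omega}$ in $L^{p_0}_\omega$, one obtains the pointwise comparison
\[
\Bigl(\tfrac{1}{\omega(B)}\int_{B}|\nabla(-\Delta_\omega)^{-1/2}f|^{2}\,\mathrm d\omega\Bigr)^{1/2}\lesssim \bigl(M(|f|^{2})\bigr)^{1/2}(x)+\bigl(M(|\nabla(-\Delta_\omega)^{-1/2}f|^{p_0})\bigr)^{1/p_0}(x),\qquad x\in B,
\]
with $M$ the uncentered maximal operator for $\mathrm d\omega$. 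Feeding this into the weighted Fefferman--Stein inequality for the doubling measure $\mathrm d\omega$ and using boundedness of $M$ on $L^{p/2}_\omega$ and on $L^{p/p_0}_\omega$ (legitimate since $p_0<p$) gives the strong $(p,p)$ bound on this range.

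The main obstacle is twofold. First, every scaling step of the Calder\'on--Zygmund and good-$\lambda$ schemes must be rewritten with the non-power volume function $r\mapsto\omega_r(x)$, using \eqref{MuckProportion} in place of $|B(x,r)|=c_nr^n$; this is bookkeeping but pervasive. Second, and genuinely nontrivial, is proving $q_+(-\Delta_\omega)>2$: this is a self-improvement statement, obtained by upgrading the $L^2_\omega$ gradient bound to $L^{2+\varepsilon}_\omega$ via a Gehring-type reverse-H\"older argument on the Caccioppoli inequality, powered by the De Giorgi--Nash--Moser regularity theory for degenerate equations of Fabes--Kenig--Serapioni \cite{MR643158}. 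This last point is precisely the technical heart of \cite{cruz2017kato} and is the step I expect to require the most work.
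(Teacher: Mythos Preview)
The paper does not prove this statement: Theorem~\ref{thm:TRiesz} is quoted verbatim from \cite{cruz2017kato} and used as a black box in the proof of Corollary~\ref{cor:MW}. There is therefore no ``paper's own proof'' to compare against.

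That said, your sketch is broadly faithful to the strategy actually carried out in \cite{cruz2017kato} (and in the unweighted predecessor \cite{auscher2007necessary}): weak $(1,1)$ via a Calder\'on--Zygmund decomposition adapted to $\mathrm d\omega$ combined with the semigroup regularization $(I-e^{\ell_i^2\Delta_\omega})^m$ in place of a H\"ormander kernel condition, and the range $p>2$ via the Auscher good-$\lambda$/reverse-H\"older scheme using $L^p_\omega$--$L^2_\omega$ off-diagonal bounds for $\sqrt t\,\nabla e^{t\Delta_\omega}$. Your identification of $q_+(-\Delta_\omega)>2$ as the genuinely nontrivial step, obtained by a Gehring-type self-improvement from the $L^2_\omega$ Caccioppoli inequality and Fabes--Kenig--Serapioni regularity, is also correct. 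One small imprecision: the displayed good-$\lambda$ comparison you wrote is not quite the form used---the argument in \cite{cruz2017kato} proceeds instead through a local reverse-H\"older inequality for $|\nabla(-\Delta_\omega)^{-1/2}f|$ on balls, rather than the pointwise maximal-function bound you stated; but the underlying mechanism is the same.
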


Using these results, we see that one can obtain similar inequalities, only for $0<\alpha\le 1/2$, in a smaller range of $p$ and assuming the weight in a smaller Muckenhoupt class and in dimension $n\ge 3$.

\begin{cor}\label{cor:MW}
     Let $p \in (1,2]$ and assume that $n \ge 3$ and $\omega \in A_p(\mathbb{R}^n) \cap RH_{\frac{n}{n-2}}(\mathbb{R}^n)$. Then there exists a constant $C = C([\omega]_{A_p}, {[\omega]_{RH_{\frac{n}{n-p}}}}, n) > 0$ such that for all $\alpha \in [0,1]$, setting $q := \frac{np}{n - p\alpha}$, we have
    \begin{equation}\label{eq:interpolationMW}
        \| f \|_{L^{q}_{{\omega}^{q/p}}(\mathbb{R}^n)} \leq C \| (-\Delta_\omega )^{\alpha/2 } f \|_{L^p_\omega(\mathbb{R}^n)}, \quad \text{for all} \ f \in D((-\Delta_\omega)^{\alpha/2 }).
    \end{equation}
    In particular, $(-\Delta_\omega)^{-\alpha/2}$ extends to a bounded operator from $L^p_\omega(\mathbb{R}^n)$ to $L^q_{\omega^{q/p}}(\mathbb{R}^n)$. { Furthermore, if $p\in [2,\min(n,q_+(-\Delta_\omega))$ then same result holds, provided that $\omega \in A_2(\mathbb{R}^n) \cap RH_{\frac{n}{n-p}}(\mathbb{R}^n)$ and the constant $C$ depends only on $n$, $[\omega]_{A_2}$, $[\omega]_{RH_{\frac{n}{n-p}}}$ and $n$.}
\end{cor}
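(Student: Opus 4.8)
The plan is to establish the two endpoints $\alpha=0$ and $\alpha=1$ of \eqref{eq:interpolationMW} and to fill in $\alpha\in(0,1)$ by complex interpolation, performed on the unweighted Lebesgue scale after conjugating by the multiplication operator $M_{\omega^{1/p}}$. The case $\alpha=0$ is trivial since then $q=p$. For $\alpha=1$, put $q_1:=\frac{np}{n-p}$, so $\frac1p-\frac1{q_1}=\frac1n$; in both ranges of $p$ the hypotheses force $\omega\in A_p(\mathbb{R}^n)\cap RH_{\frac{n}{n-p}}(\mathbb{R}^n)$ (using $RH_{\frac{n}{n-2}}\subset RH_{\frac{n}{n-p}}$ when $p\le 2$ and $A_2\subset A_p$ when $p\ge 2$), hence $\tilde\omega:=\omega^{1/p}\in A_{p,q_1}(\mathbb{R}^n)$ by Proposition \ref{prop:weights}(8). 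I would then chain two estimates. First, the weighted Sobolev inequality $\|u\|_{L^{q_1}_{\omega^{q_1/p}}(\mathbb{R}^n)}\lesssim\|\nabla u\|_{L^p_\omega(\mathbb{R}^n)}$, which follows from the classical pointwise bound $|u(x)|\le c_n I_1(|\nabla u|)(x)$ together with Theorem \ref{thm:MW}(1) applied to $I_1$ with the $A_{p,q_1}$-weight $\tilde\omega$ (equivalently, one may invoke the weighted Gagliardo--Nirenberg inequality of \cite{lacey2010sharp}). Second, the reverse Riesz inequality $\|\nabla u\|_{L^p_\omega(\mathbb{R}^n)}\lesssim\|(-\Delta_\omega)^{1/2}u\|_{L^p_\omega(\mathbb{R}^n)}$: since $1<p<q_+(-\Delta_\omega)$ in both ranges (recall $q_+>2$), Theorem \ref{thm:TRiesz} gives the $L^p_\omega$-boundedness of the Riesz transform $\nabla(-\Delta_\omega)^{-1/2}$, which applied to $(-\Delta_\omega)^{1/2}u$ yields the claim once one uses the injectivity of $-\Delta_\omega$. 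Composing the two proves \eqref{eq:interpolationMW} for $\alpha=1$, first on a dense smooth class and then on $D((-\Delta_\omega)^{1/2})$ by the $T_{\delta,R}$-truncation and Fatou argument of the proof of Theorem \ref{thm:principal}. Equivalently, $V_1:=M_{\omega^{1/p}}(-\Delta_\omega)^{-1/2}M_{\omega^{-1/p}}$ is bounded $L^p_{\mathrm dx}(\mathbb{R}^n)\to L^{q_1}_{\mathrm dx}(\mathbb{R}^n)$, while $V_0=\mathrm{Id}$ is bounded on $L^p_{\mathrm dx}(\mathbb{R}^n)$.

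For $\alpha\in(0,1)$ I would apply Stein's interpolation theorem to the analytic family $V_z:=M_{\omega^{1/p}}(-\Delta_\omega)^{-z/2}M_{\omega^{-1/p}}$ on the strip $0\le\mathrm{Re}(z)\le1$. On $\mathrm{Re}(z)=0$, $\|V_{it}\|_{L^p_{\mathrm dx}\to L^p_{\mathrm dx}}=\|(-\Delta_\omega)^{-it/2}\|_{L^p_\omega\to L^p_\omega}\le C e^{\mu|t|/2}$ for any fixed $\mu\in(0,\pi)$, because $-\Delta_\omega$ has a bounded $H^\infty(S_\mu)$-calculus on $L^p_\omega(\mathbb{R}^n)$ (Lemma \ref{lem:DR}) and $\|z\mapsto z^{-it/2}\|_{\infty,S_\mu}=e^{\mu|t|/2}$; on $\mathrm{Re}(z)=1$, $V_{1+it}=V_1\circ V_{it}$ obeys the same type of bound. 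Stein's theorem then gives $V_\alpha:L^p_{\mathrm dx}(\mathbb{R}^n)\to L^{q_\alpha}_{\mathrm dx}(\mathbb{R}^n)$ with $\frac1{q_\alpha}=\frac{1-\alpha}{p}+\frac{\alpha}{q_1}=\frac1p-\frac{\alpha}{n}$, i.e.\ $q_\alpha=q$. Undoing the conjugation yields the boundedness of $(-\Delta_\omega)^{-\alpha/2}:L^p_\omega(\mathbb{R}^n)\to L^q_{\omega^{q/p}}(\mathbb{R}^n)$, and writing $f=(-\Delta_\omega)^{-\alpha/2}g$ with $g=(-\Delta_\omega)^{\alpha/2}f$ (legitimate by injectivity of $-\Delta_\omega$) turns this into \eqref{eq:interpolationMW} on $D((-\Delta_\omega)^{\alpha/2})$; the needed density of $D((-\Delta_\omega)^{-\alpha/2})\cap L^p_\omega(\mathbb{R}^n)$ in $L^p_\omega(\mathbb{R}^n)$ is contained in Step~4 of the proof of Theorem \ref{thm:principal}. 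The range $p\in[2,\min(n,q_+(-\Delta_\omega)))$ is treated verbatim, $p<n$ keeping $q_1<\infty$ and $\omega\in A_2\subset A_p$ supplying $\omega^{1/p}\in A_{p,q_1}$. (Alternatively, the whole corollary is immediate from Theorem \ref{thm:principal}, since the hypotheses give $\omega\in A_2(\mathbb{R}^n)\cap RH_{\frac{n}{n-p\alpha}}(\mathbb{R}^n)$ for every $\alpha\in[0,1]$; the point of the argument above is to record how the classical Muckenhoupt--Wheeden and Riesz-transform results already imply this weaker statement.)

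The main technical obstacle is to make the interpolation step rigorous: for $\mathrm{Re}(z)>0$ the negative power $(-\Delta_\omega)^{-z/2}$ is unbounded on $L^p_\omega(\mathbb{R}^n)$, and on $\mathrm{Re}(z)=0$ the subordination integral representing it converges only conditionally, so $V_z$ is not a bounded family on the whole strip. The remedy is to restrict $V_z$ to a dense subspace on which every $(-\Delta_\omega)^{-z/2}$, $0\le\mathrm{Re}(z)\le1$, is bounded and depends holomorphically on $z$ — for instance the image of $L^2_\omega(\mathbb{R}^n)\cap L^p_\omega(\mathbb{R}^n)$ under $(-\Delta_\omega)e^{-\Delta_\omega}$, where $(-\Delta_\omega)^{-z/2}$ reduces to $(-\Delta_\omega)^{1-z/2}e^{-\Delta_\omega}$, a bounded operator on $L^p_\omega(\mathbb{R}^n)$ by Lemma \ref{lem:DR} with the required subexponential growth in $\mathrm{Im}(z)$ — then verify there the admissibility hypotheses of Stein's theorem, interpolate, and extend by density (the density of this subspace follows as in Step~4 of the proof of Theorem \ref{thm:principal}). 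The remaining ingredients — the pointwise Sobolev representation, needed only for $u$ in a dense smooth class where it is elementary, and the various $L^2_\omega$--$L^p_\omega$ approximation steps — are routine and parallel those already carried out above.
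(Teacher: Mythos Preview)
Your proposal is correct and follows essentially the same route as the paper: establish the endpoint $\alpha=1$ by chaining the pointwise Sobolev bound $|u|\le c_n I_1(|\nabla u|)$, the Muckenhoupt--Wheeden theorem for $I_1$ with $\tilde\omega=\omega^{1/p}\in A_{p,p^\star}$, and the $L^p_\omega$-boundedness of the Riesz transform; then complex-interpolate the family $(-\Delta_\omega)^{-z/2}$ using bounded imaginary powers from the $H^\infty$-calculus. The only cosmetic difference is that the paper invokes Voigt's abstract Stein theorem together with the Stein--Weiss change-of-measure identification $[L^p_\omega,L^{p^\star}_{\omega^{p^\star/p}}]_\theta=L^q_{\omega^{q/p}}$, whereas you conjugate by $M_{\omega^{1/p}}$ to reduce to unweighted Stein interpolation---an equivalent and arguably cleaner packaging of the same step; your discussion of the domain issue on the strip is precisely what Voigt's theorem is designed to absorb.
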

\begin{proof}
    {We begin with the case $p\in (1,2]$}. We set $p^\star:=\frac{np}{n-p}$. By assumption, we have $\omega \in A_p(\mathbb{R}^n) \cap RH_{\frac{2^\star}{2}}(\mathbb{R}^n)$. In particular, we have $\omega \in A_p(\mathbb{R}^n) \cap RH_{\frac{p^\star}{p}}(\mathbb{R}^n)$ and $\omega \in A_2(\mathbb{R}^n) \cap RH_{\frac{2^\star}{2}}(\mathbb{R}^n)$. Using point (8) of Proposition \ref{prop:weights}, this is equivalent of saying that $\omega^{\frac{1}{p}}\in A_{p,p^\star}(\mathbb{R}^n)$ and  $\omega^{\frac{1}{2}}\in A_{2,2^\star}(\mathbb{R}^n)$. 
    
    {The second condition is used qualitatively as follows. We have the pointwise inequality $|f|\leq C(n) |I_1(\nabla f)|$  for all $f\in \mathcal{D}(\mathbb{R}^n)$. Using the density of $\mathcal{D}(\mathbb{R}^n)$ in $H^1_\omega(\mathbb{R}^n)$ together with Theorem \ref{thm:MW} with $\alpha=1$, {p=2} and $\Tilde{\omega}=\omega^{\frac{1}{2}}$, we deduce that  {for all $f\in H^1_\omega(\mathbb{R}^n)$}, $f$ and $I_1(\nabla f)$ are almost everywhere limits of sequences $f_k$ and $I_1(\nabla f_k)$ with $f_k\in \mathcal{D}(\mathbb{R}^n)$. Hence  $|f|\leq C(n) |I_1(\nabla f)|$ almost everywhere on $\mathbb{R}^n$, for all $f\in D((-\Delta_\omega)^{1/2})=H^1_\omega(\mathbb{R}^n)$. 
   
   Next, this inequality and Theorem \ref{thm:MW} with $\alpha=1$ and $\Tilde{\omega}=\omega^{\frac{1}{p}}$  imply that
    \begin{equation*}
        \left\| f\right\|_{L^{p^\star}_{\omega^{p^\star/p}}(\mathbb{R}^n)} \leq C(n) \left\| I_1(\nabla f)\right\|_{L^{p^\star}_{\omega^{p^\star/p}}(\mathbb{R}^n)}\leq {C( n,p,[\omega]_{A_p},[\omega]_{RH_{\frac{n}{n-p}}} )}  \|\nabla f \|_{L^p_\omega(\mathbb{R}^n)},
    \end{equation*}
    for all $f \in D((-\Delta_\omega)^{1/2})$. Using Theorem \ref{thm:TRiesz}, we deduce that for all $f \in D((-\Delta_\omega)^{1/2})$,
    \begin{equation}\label{system:interpolation}
        \| f \|_{L^{p^\star}_{\omega^{p^\star/p}}(\mathbb{R}^n)} \leq C  \|(-\Delta_\omega)^{1/2} f \|_{L^p_\omega(\mathbb{R}^n)}.
    \end{equation} 
    Now, we rewrite \eqref{system:interpolation}, for all $f \in D((-\Delta_\omega)^{-1/2}) \cap L^p_\omega(\mathbb{R}^n)$, as
    \begin{align}\label{eq:Voigt}
       \| (-\Delta_\omega)^{-1/2} f \|_{L^{p^\star}_{\omega^{p^\star/p}}(\mathbb{R}^n)} \leq C  \| f \|_{L^p_\omega(\mathbb{R}^n)}.
     \end{align} 
   At this point,  we apply an abstract version of Stein's interpolation theorem due to Voigt \cite{Voigt1992} to the analytic family $\Psi(z) := (-\Delta_\omega)^{-\frac{z}{2}}$ defined for $0 \le \mathrm{Re}(z) \le 1$, proceeding as follows.
   By the boundedness of the $H^\infty$-calculus on $L^p_\omega(\mathbb{R}^n)$ in Lemma \ref{lem:DR}, $-\Delta_\omega$ has bounded imaginary powers. {This, together with inequality \eqref{eq:Voigt}}
   give us that for all $f\in D((-\Delta_\omega)^{-1/2}) \cap L^p_\omega(\mathbb{R}^n)$, 
    \begin{align*}  
        \sup_{t\in \mathbb{R}}\|  \Psi(it)f \|_{L^p_\omega(\mathbb{R}^n)} &
        \le c_0 \| f \|_{L^p_\omega(\mathbb{R}^n)},
   \\
   \sup_{t\in \mathbb{R}} \|  \Psi(1+it)f \|_{L^{p^\star}_{\omega^{p^\star/p}}(\mathbb{R}^n)} &
   \leq c_1 \| f \|_{L^p_\omega(\mathbb{R}^n)},
    \end{align*}
    where $c_0<\infty$ and $c_1<\infty$ are finite constants that are independent of $f$ and  that depend only on $n$, $p$, $[\omega]_{A_p}$ and $[\omega]_{RH_{\frac{n}{n-p}}}$. Using this abstract version of Stein's interpolation theorem \cite{Voigt1992}, for all $\theta \in (0,1)$ and $f\in D((-\Delta_\omega)^{-1/2}) \cap L^p_\omega(\mathbb{R}^n)$, we have 
    \begin{equation*}
        \| \Psi(\theta)f \|_{[ L^p_\omega(\mathbb{R}^n),L^{p^\star}_{\omega^{p^\star/p}}(\mathbb{R}^n) ]_\theta } \le c_0^{1-\theta} c_1^\theta \, \| f \|_{L^p_\omega(\mathbb{R}^n)}.
    \end{equation*}
    Choosing $\theta \in (0,1)$ such that $\frac{1}{q}=\frac{1-\theta}{p}+\frac{\theta}{p^\star}$ and using the Stein-Weiss complex interpolation theorem with a change of measure (see \cite[Theorem 5.5.3]{bergh2012interpolation}), we have 
     $[ L^p_\omega(\mathbb{R}^n),L^{p^\star}_{\omega^{p^\star/p}}(\mathbb{R}^n) ]_\theta = L^{q}_{{\omega}^{q/p}}(\mathbb{R}^n),$
    and we obtain
     \begin{equation}\label{eq:MW}
        \| (-\Delta_\omega )^{-\alpha/2 } f \|_{L^{q}_{{\omega}^{q/p}}(\mathbb{R}^n)} \leq C \|  f \|_{L^p_\omega(\mathbb{R}^n)}, \quad \text{for all} \ f \in D((-\Delta_\omega)^{-1/2 })\cap L^p_\omega(\mathbb{R}^n).
    \end{equation}
    By density, as in step 4 of the proof of Theorem \ref{thm:principal} this shows that $(-\Delta_\omega)^{-\alpha/2}$ defines a bounded operator from $L^p_\omega(\mathbb{R}^n)$ to $L^q_{\omega^{q/p}}(\mathbb{R}^n)$. Finally, the above inequality also holds for $f \in D((-\Delta_\omega)^{-\alpha/2 })\cap L^p_\omega(\mathbb{R}^n)$ and we use that $D((-\Delta_\omega)^{-\alpha/2 })$ is the range of $(-\Delta_\omega)^{\alpha/2 }$ to obtain \eqref{eq:interpolationMW}. 
    
    {The case $p\in [2,\min(n,q_+(-\Delta_\omega))$ is treated similarly and we skip details.}}
\end{proof}

\begin{rem} 
  We could have used \cite[Theorem 2.7]{lacey2010sharp} instead of Theorem~\ref{thm:MW}. The former provides a weighted Sobolev-Gagliardo-Nirenberg  inequality involving the gradient, and it also includes the endpoint case $p = 1$. This inequality is also sharp in the control of the bound with respect to the weight. More precisely, \cite[Theorem 2.7]{lacey2010sharp} establishes that for any $p \geq 1$ and any weight $\tilde{\omega} \in A_{p,q}(\mathbb{R}^n)$, where $q<\infty$ is such that $\frac{1}{p} - \frac{1}{q} = \frac{1}{n}$, the following estimate holds for every Lipschitz function $f$ with compact support:
  \begin{equation*}
  \| f \|_{L^q_{{\tilde\omega}^{q}}(\mathbb{R}^n)} \leq {c(n,p)} [\Tilde{\omega}]_{A_{p,q}}^{1/n'} \| \nabla f \|_{L^p_{\tilde\omega^p}(\mathbb{R}^n)}.
  \end{equation*}
  Remark that one must assume $n \geq 3$ to use it when $p = 2$. In another direction, one can use \cite[Proposition~6.1]{cruz2017kato} which provides a bound $\| (-\Delta_\omega )^{1/2 } f \|_{L^p_{\omega}(\mathbb{R}^n)} \lesssim \| \nabla f \|_{L^p_{\omega}(\mathbb{R}^n)}$ when $p\in (r_\omega, \infty)$ where $r_\omega<2$ is the infimum of exponents {$r\ge 1$ for which $w\in A_r(\mathbb{R}^n)$}. Our result, together with this inequality, implies the above inequality in  this range of $p$ and no information on the control of the constant. \end{rem}

\begin{rem} We have not tried to write down the kernels of $(-\Delta_\omega)^{-\alpha}$ as we do not know how to exploit them in general. In some cases, they can be estimated in order to use other results (still necessarily with non optimal conditions). Here is one instance.
    Define the reverse doubling  exponent of $\omega$, denoted by $\mathrm{RD}$, as the largest positive constant such that
    \begin{equation*}
    \omega(B(x,r)) \leq 2^{-\mathrm{RD}} \omega(B(x,2r)), \quad \text{for all } x \in \mathbb{R}^n \text{ and } r > 0.
    \end{equation*}
    The existence of such a constant is ensured by \eqref{MuckProportion}. Using the Gaussian upper and lower bounds for the heat kernel given in Lemma \ref{lem:GULB}, together with the representation formula
    \begin{equation*}
    (-\Delta_\omega)^{-\alpha}f = \frac{1}{\Gamma(\alpha)} \int_{0}^{\infty} t^{\alpha} e^{t\Delta_\omega}f \, \frac{\mathrm{d}t}{t},
    \end{equation*}
    we can show that, whenever $\alpha < \frac{\mathrm{RD}}{2}$, the following pointwise estimate for nonnegative functions holds
    \begin{equation*}
    (-\Delta_\omega)^{-\alpha}f(x) \approx \int_{\mathbb{R}^n} \frac{|x - y|^{2\alpha}}{\omega(B(x, |x - y|))} f(y) \, \mathrm{d}\omega(y).
    \end{equation*}
    These weighted Riesz potentials are discussed in \cite[section 5]{auscher2008weighted}. However, further restrictive conditions on $\omega$ must be assumed in order to apply the results obtained therein.
\end{rem}

\section{Case of degenerate elliptic operators with coefficients}\label{section: coeff}
In this section, we fix a matrix-valued function $A:  \mathbb{R}^n \rightarrow M_n(\mathbb{C})$ with complex measurable coefficients and such that 
\begin{equation*}
\left | A(x)\xi \cdot \zeta  \right |\leq M \omega(x)\left | \xi \right |\left | \zeta  \right |, \ \ \ \  
\nu \left | \xi \right |^2\omega(x)\leq \mathrm{Re}( A(x)\xi\cdot \overline{\xi})
\end{equation*}
for some $\nu>0, M<\infty$ and for all $\zeta, \xi \in \mathbb{C}^n$ and all $x \in \mathbb{R}^n$.

We define $L_\omega$ as the unbounded operator on $L^2_\omega(\mathbb{R}^n)$ associated with the following bounded, elliptic, and sectorial sesquilinear form on $H^1_\omega(\mathbb{R}^n) \times H^1_\omega(\mathbb{R}^n)$ defined by
\begin{equation*}
    (u,v) \mapsto \int_{\mathbb{R}^n} A(x)\nabla_x u \cdot \overline{\nabla_x v}  \ \mathrm d \omega.
\end{equation*}
The operator $L_\omega$ is sectorial with sectoriality angle $\varphi \in [ \arctan\left(\frac{M}{\nu}\right), \frac \pi 2)$, and it is injective since the measure $\mathrm{d}\omega$ has infinite mass by \eqref{MuckProportion}. For all $\beta \in \mathbb{R}$, we define $L_\omega^\beta$ as the closed operator $\mathbf{z^\beta}(L_\omega)$, constructed via the functional calculus for sectorial operators (see \cite{haase2006functional}). We denote the domain of $L_\omega^\beta$ by $D(L_\omega^\beta)$.

When the coefficients of $A$ are real-valued, all the previously stated results concerning the degenerate Laplacian $-\Delta_\omega$, such as the Gaussian upper (and lower) bounds for the heat kernel and the boundedness of the $H^\infty$-calculus on $L^p_\omega(\mathbb{R}^n)$ for every $p \in (1, \infty)$ with the same references {(see \cite{ataei2024fundamental, baadi2025degenerate} and \cite[Theorem 3.1]{duong1996semigroup}, respectively)} remain valid for the degenerate second-order operator $L_\omega$, with the only modification being the sectoriality angle, which changes from $0$ to $\varphi$. We may therefore adapt results that just need the Gaussian upper bounds to hold  by replacing $-\Delta_\omega$ with $L_\omega$. As an illustration, the analog of  Theorem \ref{thm:principal*} and Theorem \ref{cor:principal**} is as follows.

\begin{thm}\label{thm:principal avec coeff}
Let $1<p<q<\infty$. Assume that $\omega \in RH_{\frac{q}{p}}(\mathbb{R}^n)$ and set $\alpha=\frac{n}{2}(\frac{1}{p}-\frac{1}{q})$. Then, there exists a constant $C = C([ \omega  ]_{A_2},[ \omega  ]_{RH_{\frac{q}{p}}},n,p,q)<\infty$ such that
    \begin{equation*}
     \|w^{1/p} f \|_{L^{q,p}_{\mathrm{d}x}(\mathbb{R}^n)} +   \| f \|_{L^{q,p}_{\omega}(\omega^{\frac{1}{p}-\frac{1}{q}}, \mathbb{R}^n)} \leq C \| L_\omega^\alpha f \|_{L^p_{\omega}(\mathbb{R}^n)}, \quad \forall f \in D(L_\omega^\alpha).
    \end{equation*}
    In particular, $L_\omega^{-\alpha}$ defines a bounded operator from $L^p_\omega(\mathbb{R}^n)$ to $L^{q,p}_{\omega}(\omega^{\frac{1}{p}-\frac{1}{q}}, \mathbb{R}^n)$ and $w^{1/p}L_\omega^{-\alpha}$ is bounded from $L^p_\omega(\mathbb{R}^n)$ to $L^{q,p}_{\mathrm{d}x}(\mathbb{R}^n)$. {Moreover, when $p=1$ and $\omega\in RH_{{q}}(\mathbb{R}^n)$, then
    \begin{equation*}
        \| \omega  f \|_{L^{q,\infty}_{\mathrm{d}x}(\mathbb{R}^n)} \leq C \| L_\omega^{\alpha}  f \|_{L^1_{\omega}(\mathbb{R}^n)}, \quad \forall f \in D(L_\omega^\alpha).
    \end{equation*}
    In particular, $\omega \, L_\omega^{-\alpha}$ defines a bounded operator from $L^1_\omega(\mathbb{R}^n)$ to $L^{q,\infty}_{\mathrm{d}x}(\mathbb{R}^n)$.}
\end{thm}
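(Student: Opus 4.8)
The plan is to transpose verbatim the proofs of Theorems \ref{thm:principal}, \ref{thm:principal*} and \ref{cor:principal**}, observing that each of their steps uses only two features of $-\Delta_\omega$: the Gaussian \emph{upper} bound for the heat kernel (Lemma \ref{lem:GULB}, Corollary \ref{cor:semigroupe} and the ensuing Proposition \ref{prop:boundednessandconvergence}) and the bounded $H^\infty$-calculus on $L^p_\omega(\mathbb{R}^n)$, $1<p<\infty$ (Lemma \ref{lem:DR}). When $A$ has real coefficients both remain available for $L_\omega$: the kernel of $e^{tL_\omega}$ satisfies \eqref{eq:GULB} (hence \eqref{eq:GULBComplexe} on each subsector $S_\mu$, $\mu\in(\varphi,\tfrac\pi2)$) by the references recalled in Section \ref{section: coeff}; and the Duong--Robinson theorem \cite{duong1996semigroup} applies since $L_\omega$ is injective, sectorial of angle $\varphi<\tfrac\pi2$, has a bounded $H^\infty$-calculus on the Hilbert space $L^2_\omega(\mathbb{R}^n)$ (being associated to a bounded coercive sesquilinear form, by McIntosh's theorem), and its semigroup obeys the Gaussian bound just mentioned. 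The only change is the sectoriality angle, which is harmless because all functional-calculus manipulations below take place in sectors strictly containing $S_\varphi$.

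First I would record the two key lemmas. The proof of Lemma \ref{lem:HLS} uses nothing but the pointwise bound in \eqref{eq:GULB}, the doubling property \eqref{DoublingMuck}, the reverse H\"older inequalities \eqref{eq:theta}--\eqref{eq:thetatilde} (valid since $\omega\in RH_{q/p}(\mathbb{R}^n)$ and this class is open), and Young's discrete convolution inequality; replacing the kernel of $e^{t\Delta_\omega}$ by that of $e^{tL_\omega}$ (which satisfies the very same upper bound) gives, for $1<p<q<\infty$, the estimate $\|e^{tL_\omega}f\|_{L^q_{\omega^{q/p+\varepsilon}}(\mathbb{R}^n)}\le C\,t^{-\frac n2(\frac1p-\frac1q)}\|f\|_{L^p_{\omega^{1+(p/q)\varepsilon}}(\mathbb{R}^n)}$ for $|\varepsilon|$ small. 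In the same way the endpoint case $\varepsilon=0$, including $p=1$ (and $q=\infty$), follows as in Lemma \ref{lem:extrémités}.

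Next I would run the four steps of the proof of Theorem \ref{thm:principal} with $L_\omega$ in place of $-\Delta_\omega$. The Calder\'on reproducing formula $f=\frac{1}{\Gamma(\alpha)}\int_0^\infty (tL_\omega)^\alpha e^{tL_\omega}f\,\frac{\mathrm{d}t}{t}$ converges in $L^2_\omega(\mathbb{R}^n)$ for every $f\in L^2_\omega(\mathbb{R}^n)$, by the bounded $H^\infty$-calculus on $L^2_\omega(\mathbb{R}^n)$ and the injectivity of $L_\omega$; set $T_{\delta,R}=\frac{1}{\Gamma(\alpha)}\int_\delta^R t^\alpha e^{tL_\omega}\,\frac{\mathrm{d}t}{t}$. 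Step~1: Minkowski's inequality together with $\alpha=\frac n2(\frac1p-\frac1q)$ and the lemmas above yield the unweighted weak-type bounds $|\{\omega^{1/p+\varepsilon/q}\,|T_{\delta,R}f|>\lambda\}|\le C\lambda^{-q}\|f\|_{L^p_{\omega^{1+(p/q)\varepsilon}}(\mathbb{R}^n)}^q$ uniformly in $0<\delta<R$, and the endpoint $p=1$, $\varepsilon=0$ version as well. Step~2: interpolating two such estimates for triplets $(p_i,q_i,\varepsilon_i)$ with matching $\tfrac1{p_i}-\tfrac1{q_i}$ and matching $\tfrac1{p_i}+\tfrac{\varepsilon_i}{q_i}$ by the Stein--Weiss theorem with change of measure gives $\|\omega^{1/p}T_{\delta,R}f\|_{L^q(\mathbb{R}^n)}\le C\|f\|_{L^p_\omega(\mathbb{R}^n)}$ uniformly in $\delta,R$; for the Lorentz refinements one interpolates instead, at fixed $p_0<p<p_1$, $q_0<q<q_1$, either the strong $(p_i,q_i)$ bounds via the Hunt and Lizorkin--Freitag formulas (yielding $T_{\delta,R}\colon L^p_\omega(\mathbb{R}^n)\to L^{q,p}_\omega(\omega^{\frac1p-\frac1q},\mathbb{R}^n)$, as in Theorem \ref{thm:principal*}) or the weak $(p_i,q_i)$ bounds via Peetre's and Hunt's formulas (yielding $\omega^{1/p}T_{\delta,R}\colon L^p_\omega(\mathbb{R}^n)\to L^{q,p}_{\mathrm{d}x}(\mathbb{R}^n)$, as in Theorem \ref{cor:principal**}). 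Step~3: for $f\in D(L_\omega^\alpha)$ one has $T_{\delta,R}L_\omega^\alpha f=\frac{1}{\Gamma(\alpha)}\int_\delta^R (tL_\omega)^\alpha e^{tL_\omega}f\,\frac{\mathrm{d}t}{t}\to f$ in $L^2_\omega(\mathbb{R}^n)$, hence a.e.\ along a subsequence, so Fatou's lemma upgrades the uniform bounds on $T_{\delta,R}L_\omega^\alpha f$ to the stated inequalities, while convergence in measure handles the $p=1$ weak-type inequality. Step~4: the boundedness of $L_\omega^{-\alpha}$ follows from the density of $D(L_\omega^{-\alpha})\cap L^p_\omega(\mathbb{R}^n)$ in $L^p_\omega(\mathbb{R}^n)$, obtained exactly as before from $\varphi_\ell(L_\omega)=-P_N(\ell L_\omega)e^{\ell L_\omega}+P_N(\ell^{-1}L_\omega)e^{\ell^{-1}L_\omega}\to I$ strongly on $L^p_\omega(\mathbb{R}^n)$, a consequence of the analog of Proposition \ref{prop:boundednessandconvergence} (a Schur-lemma consequence of the Gaussian bounds).

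I do not anticipate a genuine obstacle; the argument is a faithful copy of the self-adjoint one. The single point requiring care is that $L_\omega$ is no longer self-adjoint, so "Borel functional calculus" must be read as "sectorial $H^\infty$-calculus" throughout: in particular, the identity $L_\omega^\alpha T_{\delta,R}=\psi_{\delta,R}(L_\omega)$ with $\psi_{\delta,R}(z)=\frac1{\Gamma(\alpha)}\int_\delta^R (tz)^\alpha e^{-tz}\,\frac{\mathrm{d}t}{t}$ must be justified via the $H^\infty$-calculus, and one checks that $\psi_{\delta,R}$ is holomorphic and bounded by $(\cos\mu)^{-\alpha}$ on $S_\mu$ uniformly in $\delta,R$ for any $\mu\in(\varphi,\tfrac\pi2)$, so that Lemma \ref{lem:DR} (extended to bounded holomorphic functions by Fatou and McIntosh's convergence lemma) applies. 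The endpoint $p=1$ needs only the $\varepsilon=0$ lemma and the $L^1_\omega$-boundedness of the semigroup, and so carries over unchanged.
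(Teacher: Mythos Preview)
Your proposal is correct and mirrors the paper's own approach: the paper does not give a separate proof of this theorem but simply observes that, for real coefficients, the Gaussian upper bounds and the bounded $H^\infty$-calculus remain valid for $L_\omega$ (with only the sectoriality angle changing from $0$ to $\varphi$), so the proofs of Theorems \ref{thm:principal}, \ref{thm:principal*} and \ref{cor:principal**} carry over verbatim. Your added care about replacing Borel functional calculus by sectorial $H^\infty$-calculus is exactly the one point that deserves mention; note, however, that for the main argument you only need the $H^\infty$-calculus on $L^2_\omega(\mathbb{R}^n)$ (which follows from McIntosh's theorem for form-type operators), not the full $L^p_\omega$-version of Lemma \ref{lem:DR}.
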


When the coefficients are complex, the reference \cite{cruz2017kato} provides us with a thorough description of the properties of $L_\omega$. In particular, it shows the existence of an interval $(p_-(L_\omega), p_+(L_\omega)) \subset (1,\infty)$, which is the largest open interval containing $2$ of exponents $p$ for which the semigroup $e^{-tL_\omega}$ is uniformly bounded on $L^p_\omega(\mathbb{R}^n)$. One has $p_-(-\Delta_\omega)=1$ and $p_+(-\Delta_\omega)=\infty$. The Gaussian upper bounds are replaced by a notion called $L^p_\omega-L^q_\omega$ off-diagonal estimates on balls (which can equivalently be replaced by cubes), denoted by $\mathcal{O}(L^p_\omega - L^q_\omega)$. See Definition~2.33 and Proposition~3.1 in that reference. It is reasonable to think that there is a result corresponding to the first part of Theorem \ref{thm:principal avec coeff} in the range $p_-(L_\omega)<p<q< p_+(L_\omega)$. However, it is not clear how to adapt Lemma \ref{lem:GULB} using this notion, which comes with a different point of view. We interpret weighted spaces via multiplication by the weight, which allows us to extend techniques from the unweighted theory. In particular, the target space is endowed with the measure $\omega^{q/p}\mathrm{d}x$. In contrast, in \cite{cruz2017kato}, the results are formulated in the space of homogeneous type  $(\mathbb{R}^n,\mathrm{d}w,|\cdot|)$.

We mention that, even in the absence of Gaussian upper bounds, one may still obtain some results from \cite{muckenhoupt1974weighted,lacey2010sharp}, at the expense of strong hypotheses. More precisely, one can reproduce the argument in Corollary~\ref{cor:MW} to obtain \eqref{system:interpolation} with $-\Delta_\omega$ replaced by $L_\omega$, under the assumptions that $n \ge 3$, $p_-(L_\omega)<p<q_+(L_\omega)$ and $\omega \in A_p(\mathbb{R}^n) \cap RH_{\frac{n}{n-2}}(\mathbb{R}^n)$. In addition,  one needs the solution to the Kato problem for $L_\omega$ in \cite{cruz2015kato} and its $L^p_\omega$ generalization in \cite{cruz2017kato} before applying the subsequent interpolation argument.

Another interesting remark is that when $p=2$ and $\alpha   \in (0,1/2)$, one can make use of the results just established for $(-\Delta_\omega)^\alpha$. Indeed, Kato's theorem \cite{Kato61} yields $D(L_\omega^\alpha) = D((-\Delta_\omega)^\alpha)$, and the norms $\| L_\omega^\alpha \cdot \|_{2,\omega}$ and $\| (-\Delta_\omega)^\alpha \cdot \|_{2,\omega}$ are equivalent, independently of the coefficients.

\subsubsection*{\textbf{Copyright}}
A CC-BY 4.0 \url{https://creativecommons.org/licenses/by/4.0/} public copyright license has been applied by the authors to the present document and will be applied to all subsequent versions up to the Author Accepted Manuscript arising from this submission.

\bibliographystyle{alpha}
\bibliography{references.bib}

\end{document}